\newcommand{\mbf}[1]{\mathbf{#1}}
\newcommand{\gaus}[1]{\lfloor #1 \rfloor}
\newcommand{\undnr}[1]{\underline{\mathbf{#1}}}
\newcommand{\Z}{\mathbb{Z}}
\newcommand{\R}{\mathbb{R}}
\newcommand{\N}{\mathbb{N}}
\newcommand{\var}{\mathrm{var}}
\newcommand{\Cov}{\mathrm{cov}}
\newcommand\mycom[2]{\genfrac{}{}{0pt}{}{#1}{#2}}
\newcommand{\btheta}{\boldsymbol{\theta}}
\newcommand{\bgamma}{\boldsymbol{\gamma}}
\newcommand{\blambda}{\boldsymbol{\lambda}}
\newcommand{\sprod}[1]{\left\langle #1 \right\rangle}
\newtheorem{theorem}{Theorem}
\newtheorem{corollary}{Corollary}
\newtheorem{lemma}{Lemma}
\newtheorem{remark}{Remark}
\newtheorem*{remark*}{Remark}
\newtheorem{defi}{Definition}
\journal{Journal of Multivariate Analysis}
\begin{document}

\begin{frontmatter}



\title{Change-Point Detection and Bootstrap for Hilbert Space Valued Random Fields}


\author[bb]{B\'eatrice Bucchia\corref{cor}} 
\ead{bbucchia@math.uni-koeln.de}
\author[mw]{Martin Wendler}
\ead{martin.wendler@uni-greifswald.de}

\cortext[cor]{Corresponding author.}
\address[bb]{Mathematical Institute, University of Cologne, Weyertal 86-90, 50931 K\"oln, Germany}
\address[mw]{Department of Mathematics and Computer Science, Ernst Moritz Arndt University Greifswald, Walther-Rathenau-Stra\ss e 47, 17487 Greifswald, Germany}

\begin{abstract}
The problem of testing for the presence of epidemic changes in random fields is investigated. In order to be able to deal with general changes in the marginal distribution, a Cram\'er-von Mises type test is introduced which is based on Hilbert space theory. A functional central limit theorem for $\rho$-mixing Hilbert space valued random fields is proven. In order to avoid the estimation of the long-run variance and obtain critical values, Shao's dependent wild bootstrap method is adapted to this context. For this, a joint functional central limit theorem for the original and the bootstrap sample is shown. Finally, the theoretic results are supplemented by a short simulation study.
\end{abstract}

\begin{keyword}
change-point detection \sep dependent wild bootstrap \sep FCLT for Hilbert space valued r.v. \sep random fields 
\MSC[2010]
62H15 
\sep 62E20 
\sep 62M99 
\sep 60G60 

\end{keyword}

\end{frontmatter}

\begin{section}{Introduction}

\subsection{Change-point tests for random fields}
The focus of this paper lies on the problem of epidemic change in the mean for Hilbert space valued random fields.  
Given a data set of observations, a classical problem in change-point analysis consists of testing whether all the observations have the same stochastic structure (i.e. marginal distribution) or whether there is a subset (the change-set) of the data where the structure is different. 
For data corresponding to a time series, the split into different data subsets can be characterized by the points in time (the change-points) at which there is a structural break. In the epidemic change model, there are two possible change-points (the start and end of an ``epidemic'') and the structure of the data changes after the first change-point but reverts back to its original state after the second change-point. Extended to random fields, this becomes the problem of testing for rectangular change-sets. 
Epidemic changes are of interest not only in medicine (see, e.g., \cite{levinKline}) but also, e.g., in signal detection and textile fabric quality control (see, e.g., \cite{zhangBresee}). The epidemic change-point problem was introduced by Levin and Kline \cite{levinKline} and has since been the subject of numerous publications (see, e.g., \cite{astonKirch}, \cite{csorgoHorvath}, \cite{linJaruskova}, \cite{rackauskas} and the publications listed therein). For random fields with a change in the mean, a nonparametric approach for this type of problem was considered in \cite{jaruskovaPiterbarg} and \cite{zemlysPHD} for i.i.d. observations and in \cite{bucchia2014} and \cite{bHeuser} for weakly dependent data. The test statistics considered in these publications are a special type of scan statistic, variants of which could - under the assumption that the distributions of the observations belong to a parametric family - also be used to test for changes in other parameters of a distribution (see, e.g., \cite{jaruskovaPiterbarg}, \cite{loader1991}, \cite{siegmund2000}). For the nonparametric problem of a change in the distributions without any prior information on the family of distributions, however, a test based on the empirical distribution function $F_n$ with 
\begin{equation*}
F_n(t)=\frac{1}{n}\sum_{i=1}^n\mathbbm{1}_{\{X_i\leq t\}}
\end{equation*}
might be more useful. Equipped with the appropriate norm, one can regard these as sums of Hilbert space valued random variables, where the true distribution function of $X_i$ is the expected value (in the Hilbert space) of $\mathbbm{1}_{\{X_i\leq \cdot\}}$. Therefore, the change in distribution problem can be translated into a change in mean problem for Hilbert space valued random variables. 

The analysis of functional data over a spatial region is of independent interest. As a special case of spatio-temporal data, where measurements over time are taken at different locations in space, functional data may arise for instance in brain imaging or in space physics (see \cite{gromenkoKokoszka2012}). 

For weakly dependent time series of functional data, the epidemic change model was investigated by Aston and Kirch \cite{astonKirch}, who constructed test statistics based on projections on the principal components. By contrast, we aim to apply the approach used by Sharipov et al. \cite{sharipov2014}, who take the full functional structure into account. To the best of our knowledge, there are no results on asymptotic change-point tests for the specific setting considered here.

A popular approach for the construction of asymptotic tests for change in mean problems are so-called CUSUM-type tests, where the mean is estimated using cumulative sums of the observations. This leads to test statistics that can be written as functionals of the partial sum process of the data. Therefore, the first aim of this paper is to give a functional central limit theorem (FCLT) for weakly dependent Hilbert space valued random fields which can then be used for change-point tests. The continuous mapping theorem can then be applied to obtain the limit distribution of a CUSUM-type test statistic.

Although the central limit theorem is known for multivariate weakly dependent random fields (see \cite{bulinski2004}, \cite{tone2010central}) and even random fields with values in a Hilbert space (see \cite{tone2011central}), most of the literature on FCLTs for random fields has focused on real-valued fields. For this one-dimensional setting, numerous results have been given not only for independent observations (see \cite{wichuraInequalities}) but also for weakly dependent fields. For instance, the monographs by Bulinski and Shashkin \cite{bulinskiShashkin} and Lin and Lu \cite{LinLu} give examples of FCLTs under conditions related to association and mixing conditions respectively. For mixing fields of real-valued random variables, Deo \cite{deoFCLT1975, deoNote1976} proved FCLTs under $\varphi$-mixing conditions and Kim and Seok \cite{kimSeok1995} extended the ideas of Deo's proofs to obtain FCLTs for $\rho$-mixing random fields. For i.i.d. Hilbert space valued random fields, Zemlys \cite{zemlysPHD} introduced a H\"olderian FCLT. The FCLT presented here can be viewed as an extension of the approach by Deo \cite{deoFCLT1975} first to vector-valued fields and then to Hilbert space valued fields.

After describing the bootstrap method considered here (section 1.2), we introduce the notations used throughout this article (section 1.3). We then present our main results in section 2. To illustrate our theoretical findings, our third section reports some simulation results. Proofs of our main results are relegated to section 4.

\begin{subsection}{Bootstrap for Hilbert space valued processes}

Nonparametric resampling methods like bootstrap are especially useful when dealing with stochastic processes, as the asymptotic distribution typically depends on a parameter function, which is hard to estimate. The bootstrap of the empirical distribution function has been well studied, starting with \cite{bickel1981} in the independent case. This was extended to time series data by Naik-Nimbalkar and Rajarshi \cite{naik1994}, Peligrad \cite{peligrad1998} and Radulovi\'c \cite{radulovic2009} using block bootstrap methods adjusted for dependence. For an overview of the block bootstrap methods, see the book by Lahiri \cite{lahiri2003}. Shao \cite{shao2010} introduced a different resampling method for time series: the dependent wild bootstrap, which generalizes Wu's \cite{wu1986} wild bootstrap. Recently, Doukhan et al. \cite{doukhan2015} extended the dependent wild bootstrap to empirical distribution functions and were able to show its validity. 
As seen above, the empirical distribution function can be interpreted as a function of Hilbert space valued random variables. 

For more general Hilbert spaces, the bootstrap has been investigated in \cite{dehling2015} and \cite{politis1994}.

For the application to change-point detection, one needs a sequential bootstrap to mimic the behavior of the partial sum process. The consistency of the sequential multiplier bootstrap for the empirical distribution function under independence was shown in \cite{gombay1999} and \cite{holmes2013} for the sequential empirical process indexed by functions. For dependent data, Inoue \cite{inoue2001} proposed a block multiplier bootstrap for the sequential empirical distribution function. Sharipov et al. \cite{sharipov2014} studied block bootstrap for the partial sum process of Hilbert space valued random variables.

While there is a broad range of results for different bootstrap methods in the time series setting, much less work has been done for random fields, although ideas for this can be traced back thirty years to \cite{hall1985}. Politis and Romano \cite{politis1993} studied block bootstrap for partial sums, Zhu and Lahiri \cite{zhu2007} for the empirical distribution function. We are not aware of any bootstrap methods for Hilbert space valued random fields or of sequential bootstrap methods for the partial sum process of random fields (even in the real valued case).

The second aim of the paper is thus to give a sequential bootstrap method for Hilbert space valued random fields. We propose a generalization of the dependent wild bootstrap to random fields (a definition of the notation used in the following can be found in section 1.3): Let $(X_{\mathbf{k}})_{\mathbf{k}\in\Z^d}$ be a random field and $\bar{X}_{n}=\frac{1}{n^d}\sum_{\undnr{1}\leq\mathbf{i}\leq\undnr{n}}X_i$. Furthermore, let $\{V_{n}(\mathbf{i})\}_{\undnr{1}\leq\mathbf{i}\leq\undnr{n}}$ be a real valued random field, independent of $(X_{\mathbf{k}})_{\mathbf{k}\in\Z^d}$, with $\mathrm{E} V_{n}(\mathbf{i})=0$, $\operatorname{var}\{V_{n}(\mathbf{i})\}=1$ and a dependence structure to be specified later. The partial sum process $\{S_{n}(\mathbf{t})\}_{\mathbf{t}\in[0,1]^d}$ with
\begin{equation}
S_n(\mathbf{t})=n^{-d/2}\sum_{\undnr{1}\leq\mathbf{i}\leq\lfloor n\mathbf{t} \rfloor}\left(X_{\mathbf{i}}-\mu\right)
\label{eq:partialSum}
\end{equation}
will be bootstrapped by
\begin{equation}
S_n^\star(\mathbf{t})=n^{-d/2}\sum_{\undnr{1}\leq\mathbf{i}\leq\lfloor n\mathbf{t}  \rfloor}V_{n}(\mathbf{i})\left\{X_{\mathbf{i}}-\hat{\mu}(\mbf{i})\right\},
\label{eq:bootstrapDef}
\end{equation}
where $\hat{\mu}(\cdot)$ is an estimator for the mean function.

If the bootstrapped partial sum process mimics the behavior of the original partial sum process, by the continuous mapping theorem, the same holds for the bootstrap version of our test statistic. The classical choice proposed by Shao \cite{shao2010} for the mean estimator is $\hat{\mu} \equiv \bar{X}_n$. However, under the alternative (presence of a change), the bootstrap with this choice of estimator might not be close to the distribution under the null hypothesis (no change).
Therefore, we propose a different variant of our bootstrap. Let $\hat{C}_n$ be an estimator of the change-set such that $\varepsilon_1 n^d \leq\#\hat{C}_n\leq(1-\varepsilon_2)n^d$ for some \\$0<\varepsilon_1<1-\varepsilon_2<1$ and all $n\in\N$. Define
\begin{equation*}
\tilde{\mu}(\mathbf{k})=\begin{cases}
\frac{1}{\#\hat{C}_n}\sum_{\mathbf{i}\in\hat{C}_n}X_{\mathbf{i}}\ \ \ &\text{if }\mathbf{k}\in\hat{C}_n,\\
\frac{1}{\#\hat{C}_n^c}\sum_{\mathbf{i}\notin\hat{C}_n}X_{\mathbf{i}}\ \ \ &\text{if }\mathbf{k}\notin\hat{C}_n.
\end{cases}
\end{equation*}
In the following, we will consider bootstrapped versions of $\{S_{n}(\mathbf{t})\}_{\mathbf{t}\in[0,1]^d}$ with either of these two mean estimators, i.e., $\hat{\mu}$ will denote either $\bar{X}_n$ or $\tilde{\mu}(\cdot)$. We will not specify the change-set estimator $\hat{C}_n$, but assume that it is a subblock of $(\undnr{0},\undnr{n}]$ which fulfills the size restriction above (see \cite{bHeuser} for some example for $\R^p$-valued random fields). For an example of a change-set estimator, see our simulation study in section 3. 
\end{subsection}

\begin{subsection}{Notations} 
Before introducing the main results, we will now cover some notations and conventions that will be used throughout this paper. $\R^d$ denotes the vector space of real vectors, equipped with the usual partial order, and $\Z^d$ and $\N^d$ denote the subsets of integer and positive integer vectors, respectively. For an integer $k\in\Z,$ we denote $(k,\ldots,k)^\top\in\Z^d$ by $\undnr{k},$ and write general vectors $(x_1,\ldots,x_d)^\top\in\R^d$ as $\mbf{x}$. 
For $\mbf{x}\in\R^d$, we use the following notations: $\gaus{\mbf{x}}=(\gaus{x_1},\ldots,\gaus{x_d})^\top$ is the integer part of $\mbf{x}$, $|\mbf{x}|=(|x_1|,\ldots,|x_d|)$ and $[\mbf{x}]=x_1\cdots x_d.$ For a set $S\in\R^d$ and a number $n\in\N$, we write 
\[S\ominus S =\{\mbf{x}\in\R^d:\exists \mbf{s},\mbf{t}\in S,\, \mbf{x}=\mbf{s}-\mbf{t}\},\]
$\# S=\text{card}(S)$ if $S$ is finite, and $n S:=\{n \mbf{x}:\,\mbf{x}\in S\}$, where $n\mbf{x}=(n x_1,\ldots,n x_d)^\top$. 

A block in $\R^d$ is a set of the form \mbox{$(\mbf{x},\mbf{y}]=\{\mbf{z}:\, x_i < z_i \leq y_i,\ i=1,\ldots,d\}$}
for $\mbf{x},\mbf{y}\in\R^d$ ($(\mbf{x},\mbf{y}]=\emptyset,$ if $x_i\geq y_i$ for some $i\in\{1,\ldots,d\}$). A block in $\Z^d$ is the intersection of a block in $\R^d$ and the set $\Z^d.$ In particular, for a block $B=(\mbf{s},\mbf{t}]\subseteq [0,1]^d$ and $n\in\N$, we denote the associated block $n B \cap \Z^d=(\gaus{n\mbf{s}},\gaus{n\mbf{t}}]\cap\Z^d$ by $B_n$. Writing $\lambda$ for the Lebesgue measure on $\R^d$, it then holds that $\lambda((\gaus{n\mbf{s}},\gaus{n\mbf{t}}])=\#B_n$.

We say a block $W$ in $\Z^d$ belongs standardly to a block $U\subset\Z^d$ and denote this by $W \triangleleft U$ whenever $W\subset U$ and the minimal vertices of $W$ and $U$ (in the sense of the lexicographic order) coincide.
Let $a_j^{(i)},b_j^{(i)}$ ($i=1,\ldots,d$, $j=1,\ldots,n_i$, $n_i\in\N$) be real numbers with $0=a_1^{(i)}<b_1^{(i)}<a_2^{(i)}<b_2^{(i)}<\cdots<a_{n_i}^{(i)}<b_{n_i}^{(i)}=1$ for $i=1,\dots,d$. We say a collection of blocks is strongly separated (see \cite{deoFCLT1975}) if it is a subfamily of blocks of the form 
\[\left\{\prod_{i=1}^d (a_{k_i}^{(i)},b_{k_i}^{(i)}]:\, 1\leq k_i\leq n_i, 1\leq i\leq d\right\}.\]

Denoting the supremum norm on $\R^d$ by $\|\cdot\|_\infty$, we define the distance 
\[\text{dist}(S,Q)=\inf\{\|\mbf{x}-\mbf{y}\|_\infty:\, \mbf{x}\in S, \mbf{y}\in Q\}\]
between two sets $S$ and $Q$.
Given observations $(X_\mbf{j})_{\undnr{1}\leq\mbf{j}\leq\undnr{n}}$ ($n\in\N$), a real-valued random field $\{V_n(\mbf{i})\}_{\undnr{1}\leq\mbf{i}\leq\undnr{n}}$ will be called a dependent multiplier field with bandwidth $q=q_n$ if it is a Gaussian random field, independent of $(X_\mbf{j})_{\undnr{1}\leq\mbf{j}\leq\undnr{n}}$ , with $\mathrm{E} V_{n}(\mathbf{i})=0$, $\operatorname{var}\{V_{n}(\mathbf{i})\}=1$ and
\begin{equation*}
\operatorname{cov}\left(V_{n}(\mathbf{i}),V_{n}(\mathbf{j})\right)=\omega((\mathbf{i}-\mathbf{j})/q)
\end{equation*}
for a symmetric bounded function $\omega$ that is continuous at zero with $\omega(\undnr{0})=1$ and 
\[\sum\limits_{-\undnr{n}\leq\mbf{j}\leq \undnr{n}}|\omega(\mbf{j}/q)|=\mathcal{O}(q^d).\]

We consider a separable (real) Hilbert space $H$ with inner product $\sprod{\cdot,\cdot}$ and associated norm $\|x\| = \sqrt{|\sprod{x,x}|}$. (Since $\R^k$ with the inner product $\sprod{x,y}=x^\top y$ is also a Hilbert space, we will also denote the usual $l_2$-norm in $\R^k$ by $\|\cdot\|$.) Unless stated otherwise, the spaces considered are always seen as measurable spaces with their Borel $\sigma$-algebra. Let $L(H,H)$ be the space of bounded (with respect to the operator norm \\$\|S\|=\sup\{\|S(h)\|:\, h\in H, \|h\|\leq 1\}$) linear operators from $H$ to $H$. $\mathcal{S}(H)$ denotes the set of all self-adjoint positive nuclear operators in $L(H,H)$. The notation $\{e_k\}_{k\in\N}$ is used for complete orthonormal systems in $H$.  The trace of a nuclear operator $S\in\mathcal{S}(H)$ is $\rm tr(S)=\sum_{i=1}^\infty \sprod{Se_i,e_i}$, and $\|S-S'\|_{\rm tr}=\rm tr(S-S')$ defines a metric on $\mathcal{S}(H)$. Consider the span $H_k$ of the first $k$ $e_i$. Then the orthogonal projections on $H_k$ are $P_k: H\to H_k$, $h\mapsto \sum_{i=1}^k \sprod{h,e_i}e_i$, and the corresponding complementary operators are $A_k:H\to H$, $h\mapsto h-\sum_{i=1}^k \sprod{h,e_i}e_i=\sum_{i=k+1}^\infty \sprod{h,e_i}e_i$. For any $H$-valued random variable, we write $X^{(k)}=P_k X$ and $X^{k}=<X,e_k>$. 

For $\mbf{t}\in[0,1]^d$, let $R_i\in\{<,\geq\}$, $i=1,\dots,d$, and define a quadrant in $[0,1]^d$ by 
\[Q_{R_1,\ldots,R_d}(\mbf{t})=\{(s_1,\ldots,s_d)\in[0,1]^d:\, s_i R_i t_i, i=1,\ldots,d\}.\]
We say that a function $x:[0,1]^d\to H$ has quadrant limits if $x_{Q(\mbf{t})}=\lim_{\mbf{s}\to\mbf{t},\mbf{s}\in Q(\mbf{t})}x(\mbf{s})$ exists for all $\mbf{t}\in [0,1]^d$ and quadrants $Q(\mbf{t})$. $x$ is called continuous from above if $x(\mbf{t})=x_{Q_{\geq,\ldots,\geq}(\mbf{t})}(\mbf{t})$ for all $\mbf{t}\in [0,1]^d$.
In analogy to the case $H=\R$, we will consider stochastic processes in the space $D_H([0,1]^d)$ of functions $x:[0,1]^d\to H$ that have quadrant limits and are continuous from above. 
We endow $D_H([0,1]^d)$ with the metric
	\[d_S(x,y)=\inf\limits_{\lambda\in\Lambda}\{\max\{\sup\limits_{\mbf{t}\in[0,1]^d}\|x(\mbf{t})-y(\lambda(\mbf{t}))\|,\sup\limits_{\mbf{t}\in[0,1]^d}\|\mbf{t}-\lambda(\mbf{t})\|\}\}, \]
	where 
	\begin{align*}
	\Lambda&=\{\lambda:[0,1]^d\to [0,1]^d:\, \lambda(t_1,\ldots,t_d)=\left(\lambda_1(t_1),\ldots,\lambda_d(t_d)\right),\, \lambda_p:[0,1]\to[0,1] \\
	& \text{cont., strictly increasing and } \lambda_p(0)=0,\lambda_p(1)=1 \text{ for all }p=1,\ldots,d\}
	\end{align*}
	 (see, e.g., \cite{bickel1971,neuhaus1969,neuhaus1971} for $D_\R([0,1]^d)$). Let $C_H([0,1]^d)$ be the subset of functions in $D_H([0,1]^d)$ that are continuous with respect to the supremum-norm $\|x\|_\infty=\sup\{\|x(\mbf{t})\|:\, \mbf{t}\in[0,1]^d\}$.

It can be seen that the proofs which Neuhaus \cite{neuhaus1969} (see also \cite{neuhaus1971}) provides for $D_\R([0,1]^d)$ can be extended to our present setting with only minor changes. In particular, $(D_H([0,1]^d),d_S)$ is separable and (topologically) complete and the Borel $\sigma$-algebra coincides with the $\sigma$-algebra generated by the coordinate mappings (for dense subsets of $[0,1]^d$). The relation between $d_S$ and the supremum norm on $D_H([0,1]^d)$ is the same as in $D_\R([0,1]^d)$, and $(C_{H}([0,1]^d),\|\cdot\|_\infty)$ is a separable Banach space with $C_H([0,1]^d)\subseteq D_H([0,1]^d)$. 
If $(X_\mbf{t})_{\mbf{t}\in[0,1]^d}$ is a stochastic process with values in $D_H([0,1]^d)$, then the increment $X(B)$ of $X$ around a block $B=\prod_{i=1}^d(s_i,t_i]$ is given by
\[X(B)=\sum_{\varepsilon_1=0,1}\cdots \sum_{\varepsilon_d=0,1} (-1)^{d-\sum_{i=1}^d\varepsilon_i}X\left(s_1+\varepsilon_1(t_1-s_1),\ldots,s_d+\varepsilon_d(t_d-s_d)\right), \]
where we use the notations $X(\mbf{t})$ and $X_{\mbf{t}}$ synonymously.
For ease of notation, we will often write this as
\[X(B)=\sum\limits_{\boldsymbol{\varepsilon}\in\{0,1\}^d}(-1)^{d-\sum_{j=1}^d \varepsilon_j} X\left(\mbf{s}+\boldsymbol{\varepsilon}(\mbf{t}-\mbf{s})\right). \]
Since $X_\mbf{t}=X((\undnr{0},\mbf{t}])$ a.s. for a process which vanishes at zero (i.e., $X_\mbf{s}=0$ a.s. for any $\mbf{s}\in[0,1]^d$ with $\min s_i = 0$), we often denote $X((\undnr{0},\mbf{t}])$ and $X(n(\undnr{0},\mbf{t}])$ by $X(\mbf{t})$ and $X_n(\mbf{t})$ respectively. For $\mbf{k},\mbf{m}\in\Z^d$ and $\{x_\mbf{j}\}_{\mbf{j}\in\Z^d}$, we write 
\[\sum\limits_{\mbf{k}<\mbf{j}\leq \mbf{m}}x_{\mbf{j}}=\begin{cases}
\sum\limits_{\mbf{j}\in(\mbf{k},\mbf{m}]\cap\Z^d}x_{\mbf{j}}, & \mbf{k}<\mbf{m}\\
\sum\limits_{\mbf{j}\in\emptyset}x_{\mbf{j}}=0, & \mbf{k}\nless \mbf{m}.
\end{cases}\]
We will now define the Hilbert space valued analogue of the Brownian sheet (or Chentsov process):
\begin{defi}
An $H$-valued stochastic process $X=(X_\mbf{t})_{\mbf{t}\in[0,1]^d}$ is a Brownian sheet in $H$ with covariance operator $S\in\mathcal{S}(H)$ iff
	\begin{enumerate}
	\item $\Pr\left(X \in C_H([0,1]^d)\right)=1$,
	\item $X_\mbf{t}=0$ a.s. if $[\mbf{t}]=0$ and 
	\item for pairwise disjoint blocks $B_1,\ldots,B_m$ in $[0,1]^d$, the increments $X(B_1),\ldots,X(B_m)$ are independent Gaussian random elements in $H$ with mean zero and covariance operators $\lambda(B_i)S$, where $S\in\mathcal{S}(H)$ does not depend on $B_i$.
\end{enumerate}
\end{defi}

\begin{remark}
\begin{itemize}
	\item In order to see that the independence and Gaussian distribution of the increments over pairwise disjoint blocks yields a Gaussian process, one can proceed analogously to the one-dimensional case and write any linear combination of $X_{\mbf{t}_i}=X((\undnr{0},\mbf{t}_i])$ for points $\mbf{t}_i\in[0,1]^d$ ($i=1,\ldots,l$) as a linear combination of increments over pairwise disjoint blocks whose union is $\cup_{i=1}^l (\undnr{0},\mbf{t}_i]$. Note that the increments have the following additivity property: If a block $C$ can be partitioned into two disjoint blocks $A$ and $B$, then $X(C)=X(A\cup B)=X(A)+X(B)$ almost surely.
	\item If $X=(X_\mbf{t})_{\mbf{t}\in[0,1]^d}$ is a Brownian sheet in $H$ with covariance operator $S\in\mathcal{S}(H)$, then $(\sprod{X(\mbf{t}),h})_{\mbf{t}\in[0,1]^d}$ is a Brownian sheet with variance $\sprod{Sh,h}$ in $\R$ for any $h\in H$.
\end{itemize}
\end{remark}

For a $\sigma$-algebra $\mathcal{A}$, we define $L^p(\mathcal{A},H)$ as the set of all $\mathcal{A}$-measurable $H$-valued random elements $X$ with $\|X\|_p=\left(\mathrm{E}\|X\|^p\right)^{1/p}<\infty$.

As a measure of dependence, we use the following mixing conditions: For two $\sigma$-algebras $\mathcal{A}$ and $\mathcal{B}$, we can define the usual strong mixing coefficients
\[\alpha(\mathcal{A},\mathcal{B})=\sup\left\{|\Pr(A \cap B) - \Pr(A)\Pr(B)|:\; A \in\mathcal{A}, B\in\mathcal{B}\right\} \]
as well as the $\rho$-mixing coefficients
\[\rho_{\R}(\mathcal{A},\mathcal{B})=\sup\left\{\frac{|\Cov(X,Y)|}{\sqrt{\var(X) \var(Y)}}:\; X\in L^2(\mathcal{A},\R), Y \in L^2(\mathcal{B},\R), \var(X),\var(Y)>0 \right\}, \]
which lead to the following types of mixing coefficients for random fields. For a set $M\subset \Z^d$ let $\mathcal{A}_M=\sigma(X_\mbf{k}:\, \mbf{k}\in M)$ and define
\begin{equation}\label{eq:rhomixing}
\begin{aligned}
\rho_{\R}(r) = \sup\{\rho_{\R}(\mathcal{A}_M,\mathcal{A}_N):\;& M,N\subseteq \Z^d, \exists i\in\{1,\ldots,d\}\,\exists A,B\subset \Z, \text{dist}(A,B)\geq r:\\
&\,\forall \mbf{j}\in M,\mbf{k}\in N:\, j_i\in A, k_i\in B   \} 
\end{aligned}
\end{equation}
and
\[\rho_{\R}^*(r) = \sup\{\rho_{\R}(\mathcal{A}_M,\mathcal{A}_N):\; M,N\subseteq \Z^d, \text{dist}(M,N)\geq r\}. \]
As usual, we say that a random field is $\rho_{\R}$-mixing ($\rho_{\R}^*$-mixing), if $\lim_{r\to\infty}\rho_{\R}(r)=0$ ($\lim_{r\to\infty}\rho_{\R}^*(r)=0$).  

Finally, we use an $\alpha$-mixing coefficient where the cardinality of the index sets is restricted: For $k,m\in\N$, define
\[\alpha_{k,m}(r) = \sup\{\alpha(\mathcal{A}_M,\mathcal{A}_N):\; M,N\subseteq \Z^d,\, \text{dist}(M,N)\geq r,\, \#M\leq k, \#N\leq m\}. \]
To specify that the mixing coefficients belong to a process $X$, we use the notation $\rho_{\R,X}$ ($\rho_{\R,X}^*$ or $\alpha_{k,m,X}$).

For the two mixing coefficients, the inequality $\alpha(\mathcal{A},\mathcal{B})\leq \frac{1}{4}\rho_{\R}(\mathcal{A},\mathcal{B})$ holds. As in the case of time series, one could also define other mixing coefficients (see, e.g., \cite{deoFCLT1975} or \cite{LinLu} for $\varphi$-mixing). However, compared to the classical mixing conditions used for time series, the types of mixing considered for random fields are often stronger insofar as they allow interlaced sets in the suprema above.  While mixing conditions are not easy to verify in practice (see, e.g., \cite{doukhan1994}, \cite{guyon1995} for linear random fields), they are very common in the literature. For a thorough review of mixing conditions, see the monographs \cite{bradley2007} and \cite{doukhan1994}. An alternative measure of dependence for random fields has recently been proposed in \cite{el2013central}.
\end{subsection}

\end{section}

\begin{section}{Main results}

\begin{subsection}{Change-point problem for random fields}

We now present our FCLT for Hilbert space valued $\rho_\R$-mixing random fields. For real-valued $\rho$-mixing random fields, Kim and Seok \cite{kimSeok1995} used an approach proposed by Ibragimov \cite{ibragimov1975} to prove the FCLT under an additional assumption on the growth of the variance of the partial sums. Here, we have used a $\rho$-mixing condition that is stronger than the one in \cite{kimSeok1995} (we allow interlaced index sets in \eqref{eq:rhomixing}) and, since it is unclear how the growth condition would translate to the Hilbert space context, we use assumption 2 (see below) on the $\alpha$-mixing coefficients instead, which implies condition (2.6) in Corollary 2.3 of \cite{kimSeok1995}. However, although our assumptions are therefore stronger for real-valued fields, the following result is applicable not only to this special case but to general separable Hilbert spaces. As a byproduct of our proof, we extend a result from \cite{deoFCLT1975} to multivariate $\rho_\R$-mixing random fields. 

\begin{theorem}\label{thm:FCLT}
Let $\{X_\mbf{j}\}_{\mbf{j}\in\Z^d}$ be a strictly stationary $H$-valued random field with $\mathrm{E}X_{\undnr{1}}=\mu$. Assume that $\{X_\mbf{j}\}_{\mbf{j}\in\Z^d}$ is $\rho_{\R}$-mixing and that the following conditions hold for some $\delta>0$:
\begin{enumerate}
	\item $\mathrm{E}\|X_{\undnr{1}}\|^{2+\delta} < \infty$ 
	\item $\sum_{m\geq 1}m^{d-1}\alpha_{1,1}(m)^{\delta/(2+\delta)}<\infty$	
\end{enumerate} 
Then
\[
\left\{\frac{1}{n^{d/2}}\sum\limits_{\undnr{1}\leq \mbf{j}\leq \gaus{n\mbf{t}}} (X_\mbf{j}-\mu) \right\}_{\mbf{t}\in[0,1]^d} \Rightarrow \{W(\mbf{t})\}_{\mbf{t}\in[0,1]^d},
\]
where $\{W(\mbf{t})\}_{\mbf{t}\in[0,1]^d}$ is a Brownian sheet in $H$ and $W({\undnr{1}})$ has the covariance operator $S\in\mathcal{S}(H)$, defined by 
\begin{equation}
\sprod{Sx,y} = \sum_{\mbf{k}\in\Z^d}\mathrm{E}[\sprod{X_{\undnr{0}}-\mu,x}\sprod{X_\mbf{k}-\mu,y}],\quad \text{for }x,y\in H.
\label{eq:CovOp}
\end{equation}
Furthermore, the series in \eqref{eq:CovOp} converges absolutely.
\end{theorem}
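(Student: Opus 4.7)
The plan is to establish the theorem in three stages: first derive absolute convergence of the covariance series, then prove the FCLT for real-valued (and implicitly $\mathbb R^k$-valued) projections of the field, and finally lift to $H$ by a truncation argument based on the projections $P_k$ and their complements $A_k$ associated to an orthonormal basis $\{e_i\}_{i\in\mathbb N}$ of $H$.

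For absolute summability of the series defining $S$, fix $x,y \in H$ and set $Y_\mbf{j}^x=\sprod{X_\mbf{j}-\mu,x}$. The field $\{Y_\mbf{j}^x\}$ is strictly stationary, has a finite $(2+\delta)$-moment (bounded by $\|x\|^{2+\delta}\,\mathrm E\|X_{\undnr 1}-\mu\|^{2+\delta}$ via Cauchy--Schwarz), and inherits the $\alpha$-mixing coefficients of $\{X_\mbf{j}\}$. Rio's covariance inequality then yields $|\mathrm E[Y_{\undnr 0}^x Y_\mbf{k}^y]|\leq C\,\alpha_{1,1}(\|\mbf k\|_\infty)^{\delta/(2+\delta)}\|x\|\|y\|$. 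Summing over $\mbf{k}\in\Z^d$ and grouping by $\|\mbf{k}\|_\infty=m$ (there are $\mathcal O(m^{d-1})$ such points) shows absolute convergence by Assumption~2. Taking $x=y=e_i$ and summing in $i$ further gives $\mathrm{tr}(S)<\infty$, so $S\in\mathcal S(H)$.

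For finite-dimensional convergence, by the Cram\'er--Wold device it suffices to prove the FCLT for every real linear combination of finitely many coordinates of the form $\sprod{S_n(\mbf t), h}$. These linear combinations are partial sum processes of the stationary real-valued $\rho_\R$-mixing field $\{Y_\mbf{j}^h\}$, which has finite $(2+\delta)$-moment. Under our Assumptions 1--2 the hypotheses of Corollary~2.3 of Kim--Seok \cite{kimSeok1995} (or the Deo-type FCLT extended to $\rho$-mixing) are satisfied, so $\sprod{S_n(\cdot),h}$ converges weakly in $D_\R([0,1]^d)$ to a real Brownian sheet with variance $\sprod{Sh,h}$, and joint convergence of several linear functionals follows analogously. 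Hence the finite-dimensional distributions of $S_n$ converge to those of a Brownian sheet $W$ in $H$ with covariance operator $S$.

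For tightness in $D_H([0,1]^d)$ I would use a truncation argument: writing $S_n=P_k S_n + A_k S_n$, it is enough to prove that for each fixed $k$ the process $P_k S_n$ is tight in $D_{H_k}([0,1]^d)\cong D_{\R^k}([0,1]^d)$, and that the remainder satisfies $\lim_{k\to\infty}\limsup_n \Pr\bigl(\sup_{\mbf t\in[0,1]^d}\|A_k S_n(\mbf t)\|>\varepsilon\bigr)=0$. The first point reduces, via Bickel--Wichura's criterion and the Cram\'er--Wold trick, to a moment bound on increments of the form
\[
\mathrm E\bigl[\sprod{S_n(B),h}^{2+\delta}\bigr]\leq C(h)\,\lambda(B)^{(2+\delta)/2}
\]
for blocks $B\subseteq[0,1]^d$; this follows from Deo's strongly-separated-block technique adapted to $\rho_\R$-mixing (which is the promised byproduct of the proof). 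The second point reduces, via a Bickel--Wichura maximal inequality, to controlling $\mathrm E\|A_k S_n(\undnr 1)\|^2=\mathrm{tr}(A_k\Sigma_n A_k)$, where $\Sigma_n$ is the covariance operator of $S_n(\undnr 1)$; Step~1 gives $\Sigma_n\to S$ in trace norm, and nuclearity of $S$ makes $\mathrm{tr}(A_k S A_k)\to 0$ as $k\to\infty$.

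The main obstacle is the tightness moment inequality for $\rho_\R$-mixing random fields in dimension $d\geq 2$. Deo's original argument for $\varphi$-mixing exploits the one-sided nature of $\varphi$; under $\rho_\R$-mixing one must replace it by a symmetric covariance inequality and decompose each block into subblocks that are strongly separated in the sense of Section~1.3, then iterate over the $d$ coordinate directions while keeping the constants proportional to the appropriate power of $\lambda(B)$. Making this bookkeeping uniform in $n$ and in the block geometry is the technical heart of the argument; once it is in place, the Hilbert-space lift via $P_k/A_k$ and the finite-dimensional convergence combine to yield the stated weak convergence.
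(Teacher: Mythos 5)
Your overall architecture coincides with the paper's: absolute convergence of the covariance series via Rio's inequality and Assumption~2, finite-dimensional (indeed $\R^k$-valued) FCLTs for the projections via a Cram\'er--Wold/Deo-type argument, and a lift to $H$ by splitting $S_n=P_kS_n+A_kS_n$ and controlling the tail uniformly in $n$ as $k\to\infty$ (the paper packages this as Lemma~\ref{le:ChenWhite}). The finite-dimensional part of your plan is sound, including the appeal to Kim--Seok for the real-valued projections (the paper notes that Assumption~2 implies their condition (2.6)) and the recognition that the block/moment bookkeeping for $\rho_\R$-mixing increments is the technical core; the paper handles that via a Rosenthal inequality for $\rho_\R$-mixing Hilbert-valued fields (Lemma~\ref{le:RosZhang}, proved by induction over $d$ from Zhang's one-dimensional result) combined with M\'oricz's maximal inequality.

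The genuine gap is in your treatment of the remainder $A_kS_n$. You propose to reduce $\Pr\bigl(\sup_{\mbf t}\|A_kS_n(\mbf t)\|>\varepsilon\bigr)$, ``via a Bickel--Wichura maximal inequality,'' to the single second moment $\mathrm E\|A_kS_n(\undnr 1)\|^2=\mathrm{tr}(A_k\Sigma_nA_k)$. No such reduction is available for partial sums of a dependent field: Doob/Cairoli-type inequalities require a (sub)martingale structure that the prelimit process does not have, and the M\'oricz-type maximal inequality needs a moment bound of order $r>2$ on sums over \emph{all} subblocks with majorant $(\#U)^{r/2}$ (exponent strictly greater than $1$); with only second moments one obtains at best a logarithmic loss that does not vanish. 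Moreover, the trace-norm convergence $\Sigma_n\to S$ you invoke is itself not a consequence of your Step~1, since Rio's bound controls $\sum_{\mbf k}|\Cov(\sprod{X_{\undnr 0},e_i},\sprod{X_{\mbf k},e_i})|$ by $\|\sprod{X_{\undnr 0},e_i}\|_{2+\delta}^2$, and $\sum_i\|\sprod{X_{\undnr 0},e_i}\|_{2+\delta}^2$ need not be finite (only $\sum_i\mathrm E\sprod{X_{\undnr 0},e_i}^2=\mathrm E\|X_{\undnr 0}\|^2$ is); the paper instead bounds $\mathrm{tr}(S)$ using a $\rho$-mixing variance inequality with a constant uniform in $i$. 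The correct route --- and the one the paper takes --- is to apply the Rosenthal/M\'oricz chain with $r=2+\delta$ directly to the truncated field $\{A_k(X_{\mbf j})\}$, obtaining
\[
\mathrm E\Bigl[\sup_{\mbf t\in[0,1]^d}\|A_kS_n(\mbf t)\|^{2+\delta}\Bigr]\leq C\Bigl\{\mathrm E\|A_kX_{\undnr 1}\|^{2+\delta}+\bigl(\mathrm E\|A_kX_{\undnr 1}\|^2\bigr)^{(2+\delta)/2}\Bigr\}
\]
uniformly in $n$, and then letting $k\to\infty$ by dominated convergence. You already have all the ingredients for this (you use the $(2+\delta)$-moment machinery for the finite-dimensional blocks); you simply need to apply it to $A_k(X_{\mbf j})$ rather than attempting a second-moment shortcut.
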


\begin{remark}
Theorem \ref{thm:FCLT} could also be viewed as an extension of the central limit theorem proven by Tone \cite{tone2011central} for Hilbert space valued $\rho_\R$-mixing random fields to an FCLT. However, we do not build directly on Tone's result but present a proof which is more similar (see the proof of Lemma \ref{le:FCLTRkMixing}) to the proof of the FCLT in \cite{deoFCLT1975}. 
Compared to Tone's result, we employ both a stronger integrability assumption (1. in Theorem \ref{thm:FCLT}) and an additional assumption on the $\alpha$-mixing rate (2. in Theorem \ref{thm:FCLT}). Assumption 1. is used in order to obtain the maximal inequality \eqref{eq:maxIneq} in Lemma \ref{le:RosZhang}, which in turn allows us to infer both the tightness of the process (see Lemma \ref{le:FCLTRkMixing}) and point (c) of Lemma \ref{le:ChenWhite}. The $\alpha$-mixing assumption 2. yields the absolute convergence in \eqref{eq:CovOp}. This additional information is later used in the proof of Theorem \ref{theo2}. One could replace assumption 2. by any assumption that implies the absolute convergence of the covariance series. Alternatively, one could use the result in \cite{tone2011central} to obtain Theorem \ref{thm:FCLT} without the absolute convergence of \eqref{eq:CovOp} (see the proof of Lemma \ref{le:FCLTRkMixing}: point $(i)$ follows directly from Theorems 3.1 and 3.2 in \cite{tone2011central}).
\end{remark}

\noindent This can be used for the following change-point problem: Given observations $\{X_\mbf{j}\}_{\mbf{j}\in\{1,\ldots,n\}^d}$ with values in $H$, we want to test the null-hypothesis
\[H: \quad  \mathrm{E}X_\mbf{j}=\mu \quad \forall\; \mbf{k}\in\{1,\ldots,n\}^d \]
against the epidemic change alternative 
\begin{align*}
H_A: \quad &\exists\: \undnr{1} \leq \mbf{k}_0 <\mbf{m}_0 \leq \undnr{n}: \mathrm{E}X_\mbf{k} = \begin{cases}\mu, & \mbf{k}\in\{1,\ldots,n\}^d\setminus (\mbf{k}_0 ,\mbf{m}_0] \\ \mu+ \delta, &  \mbf{k}\in (\mbf{k}_0 ,\mbf{m}_0], \end{cases} 
\end{align*}
where $\mu,\delta\in H$ and $\mbf{k}_0,\mbf{m}_0$ are unknown. CUSUM-type asymptotic tests for the epidemic change in the mean problem have been investigated, e.g., in \cite{csorgoHorvath}, \cite{linJaruskova}, \cite{rackauskas} and \cite{yao1993} for real-valued time series. These were extended to i.i.d. random fields by Zemlys \cite{zemlysPHD} - who used an approach similar to \cite{rackauskas} - and Jaru\v{s}kov{\'a} and Piterbarg \cite{jaruskovaPiterbarg}. For weakly dependent random fields, \cite{bucchia2014} gave an extension of some results from \cite{jaruskovaPiterbarg}. The epidemic change problem for weakly dependent time series of functional observations was treated by Aston and Kirch \cite{astonKirch}, who constructed asymptotic tests based on the principal components of the data.

Consider the test statistic
\[T_{n}=\max\limits_{\undnr{0}\leq\mbf{k}<\mbf{m}\leq\undnr{n}}\frac{1}{n^{d/2}}\left\|\sum\limits_{\mbf{k}<\mbf{j}\leq\mbf{m}}X_\mbf{j} - \frac{[\mbf{m}-\mbf{k}]}{n^d}\sum\limits_{\undnr{1}\leq\mbf{j}\leq\undnr{n}}X_\mbf{j}\right\|. \]
Analogously to the univariate case, since both the maximum function and the Hilbert space norm are continuous, Theorem \ref{thm:FCLT} together with the continuous mapping theorem can be used to obtain the limit distribution of these statistics under $H$:
\begin{corollary}\label{cor:chpTest}
Under the assumptions of Theorem \ref{thm:FCLT}, it holds that
\[T_n \Rightarrow \sup\limits_{\undnr{0}\leq \mbf{s}<\mbf{t}\leq\undnr{1}}\left\|W(\mbf{s},\mbf{t}] - [\mbf{t}-\mbf{s}]W(\undnr{1})\right\|=T,\]
where $\{W(\mbf{t})\}_{\mbf{t}\in[0,1]^d}$ is the $H$-valued Brownian sheet defined in Theorem \ref{thm:FCLT}.
\end{corollary}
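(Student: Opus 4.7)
The plan is to write $T_n$ as (approximately) a continuous functional of the partial sum process and then apply Theorem~\ref{thm:FCLT} via the continuous mapping theorem. First I rewrite $T_n$ in terms of $S_n(\mbf{t})=n^{-d/2}\sum_{\undnr{1}\leq\mbf{j}\leq\gaus{n\mbf{t}}}(X_\mbf{j}-\mu)$. Since $\sum_{\mbf{k}<\mbf{j}\leq\mbf{m}}\mu=[\mbf{m}-\mbf{k}]\mu$ cancels inside the norm and $[\mbf{m}-\mbf{k}]/n^d=[\mbf{m}/n-\mbf{k}/n]$, this yields
\[T_n=\max_{\undnr{0}\leq\mbf{k}<\mbf{m}\leq\undnr{n}}\bigl\|S_n((\mbf{k}/n,\mbf{m}/n])-[\mbf{m}/n-\mbf{k}/n]\,S_n(\undnr{1})\bigr\|.\]
Motivated by this, define $\Phi:D_H([0,1]^d)\to\R$ by
\[\Phi(x)=\sup_{\undnr{0}\leq\mbf{s}\leq\mbf{t}\leq\undnr{1}}\bigl\|x((\mbf{s},\mbf{t}])-[\mbf{t}-\mbf{s}]\,x(\undnr{1})\bigr\|,\]
so that $\Phi(W)=T$ by the definition of $T$.

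The key step is to show that $\Phi$ is continuous at every $x\in C_H([0,1]^d)$ with respect to $d_S$. Following the classical one-dimensional Skorokhod argument, if $x_n\to x$ in $d_S$ and $x$ is continuous, then this convergence upgrades to $\|x_n-x\|_\infty\to 0$: choosing near-optimal time changes $\lambda_n\in\Lambda$ and using the uniform continuity of $x$ on the compact $[0,1]^d$ does the job. Since each increment $x((\mbf{s},\mbf{t}])$ is a signed sum of $2^d$ point values of $x$ and $[\mbf{t}-\mbf{s}]\in[0,1]$, the reverse triangle inequality then gives $|\Phi(x_n)-\Phi(x)|\leq(2^d+1)\|x_n-x\|_\infty\to 0$. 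Because the Brownian sheet $W$ lies in $C_H([0,1]^d)$ almost surely (by the definition recalled above), Theorem~\ref{thm:FCLT} and the continuous mapping theorem yield $\Phi(S_n)\Rightarrow\Phi(W)=T$.

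To conclude, I need $|\Phi(S_n)-T_n|\to 0$ in probability. The crucial observation is that $S_n((\mbf{s},\mbf{t}])$ depends on $(\mbf{s},\mbf{t})$ only through $\gaus{n\mbf{s}}$ and $\gaus{n\mbf{t}}$, so the sup defining $\Phi(S_n)$ decomposes over cells of the $(1/n)$-grid, and within each cell only the continuous factor $[\mbf{t}-\mbf{s}]$ varies; an elementary expansion shows that its oscillation there is bounded by a constant (depending on $d$) times $1/n$. Hence $|\Phi(S_n)-T_n|\leq C_d\,n^{-1}\|S_n(\undnr{1})\|$, which is $o_P(1)$ because $\|S_n(\undnr{1})\|$ is tight by Theorem~\ref{thm:FCLT}; Slutsky's lemma then delivers $T_n\Rightarrow T$. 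I expect the main technical obstacle to be verifying the continuity of $\Phi$ on $(D_H([0,1]^d),d_S)$, specifically the upgrade from $d_S$-convergence to uniform convergence at continuous limits in this $d$-dimensional $H$-valued setting; here one can rely on the paper's remark that Neuhaus's one-dimensional real-valued arguments extend to $D_H([0,1]^d)$ with only minor changes.
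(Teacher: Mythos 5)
Your proof is correct and takes essentially the same route as the paper, which disposes of this corollary in a single sentence by invoking the continuous mapping theorem together with Theorem~\ref{thm:FCLT}; your write-up simply supplies the details the paper leaves implicit (continuity of the supremum functional at $C_H([0,1]^d)$-paths under $d_S$, and the $O_P(n^{-1})$ discrepancy between the grid maximum $T_n$ and the continuous supremum of the functional applied to $S_n$). No gaps.
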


\begin{remark}
Here and in the following, we focus on the problem of detecting rectangular change-sets. For random fields, whose index set is a grid with rectangular mesh, rectangular sets or their unions are in a sense a natural fit. While it would in principle be possible to extend the testing procedure presented here to other classes of sets, doing so would pose additional technical challenges that are beyond the scope of this paper.
From a technical point of view, rectangular change-sets have the advantage that partial sums over such sets can be rewritten as sums and differences of partial sums over rectangles whose lower edge is zero. This makes it possible to exploit the rich theory of vector-indexed processes and in particular the well developed theory of weak convergence (see, e.g., \cite{bickel1971} and \cite{neuhaus1969}). Furthermore, the result in \cite{moricz1983} provides handy maximal inequalities in this setting, which are essential tools to prove the tightness of a partial sum process (see, e.g., the proofs of Lemma \ref{le:FCLTRkMixing} and Theorem \ref{thm:FCLT} below).
\end{remark}

For $\R^p$-valued observations $\{X_\mbf{j}\}_{\mbf{j}\in\{1,\ldots,n\}^d}$, this result can be used to obtain a test for the change in distribution problem of testing
\[H: \quad  F(\mbf{t})=\Pr(X_\mbf{i}\leq \mbf{t})\quad \forall\; \mbf{i}\in\{1,\ldots,n\}^d, \mbf{t}\in\R^p \]
against the alternative 
\begin{align*}
H_A: \quad &\exists\: \undnr{1} \leq \mbf{k}_0 <\mbf{m}_0 \leq \undnr{n}: \Pr(X_\mbf{k}\leq \mbf{t})= \begin{cases}F(\mbf{t}), & \mbf{k}\in\{1,\ldots,n\}^d\setminus (\mbf{k}_0 ,\mbf{m}_0] \\ G(\mbf{t}), &  \mbf{k}\in (\mbf{k}_0 ,\mbf{m}_0], \end{cases} 
\end{align*}
where the distribution functions $F$ and $G$, $F\neq G$, are unknown. Our goal is to write this as a change in mean problem for a suited Hilbert space. Common test statistics depend on the empirical distribution functions as estimators for the unknown parameters $F$ and $G$. These are sums over the indicator functions $\mathbbm{1}_{\{X_\mbf{j}\leq\mbf{t}\}}$, $\mbf{t}\in\R^p$. For some nonnegative, bounded weight function $w:\R^p\to\R$ with $\int_{\R^p}w(\mbf{t})d\mbf{t}<\infty$, the latter can be interpreted as random elements of the Hilbert space $L^2(\R^p,w)$ of measurable functions $f:\R^p \to \R$, with $\|f\|<\infty$ for the norm induced by the inner product
\[\sprod{f,g} = \int_{\R^p}f(\mbf{t})g(\mbf{t})w(\mbf{t})d\mbf{t}.\]
It is common to choose a density as the weight function $w$, since this leads to a scalar product of the form 
\[\sprod{f,g} = \int_{\R^p}f(\mbf{t})g(\mbf{t})dF,\]
where $F$ is the corresponding distribution function, and a Cram{\'e}r-von Mises type test. When no information on the true distribution of the data is available, an obvious choice is the density of the normal distribution. However, any function that fulfills the above requirements can in principle be used.
If $F$ is the distribution function of $X_\mbf{j}$, it can be seen that for any $h \in L^2(\R^p,w)$,
\[\mathrm{E}\left[\sprod{\mathbbm{1}_{\{X_\mbf{j}\leq\cdot\}},h}\right] = \mathrm{E}\left[\int_{\R^p}\mathbbm{1}_{\{X_\mbf{j}\leq\mbf{t}\}}h(\mbf{t})w(\mbf{t})d\mbf{t}\right] = \int_{\R^p} F(\mbf{t})h(\mbf{t})w(\mbf{t})d\mbf{t} = \sprod{F,h} \]
by Fubini's theorem. Therefore, $F$ is the expected value of $\mathbbm{1}_{\{X_\mbf{j}\leq\cdot\}}$ in $L^2(\R^p,w)$ and we obtain a Cram\'er-von Mises type test for the change in distribution problem by translating Corollary \ref{cor:chpTest} for this special case:
\begin{corollary} \label{cor:chDistr}
Let $\{X_\mbf{j}\}_{\mbf{j}\in\Z^d}$ be an $\R^p$-valued stationary random field with marginal distribution function $F$, which is $\rho_{\R}$-mixing with $\alpha$-mixing coefficients that satisfy 
\[\sum_{m\geq 1}m^{d-1}\alpha_{1,1}(m)^{\delta/(2+\delta)}<\infty\]
for some $\delta>0$. The change-point statistic
\[T_{n,w}=\max\limits_{\undnr{0}\leq\mbf{k}<\mbf{m}\leq\undnr{n}}\frac{1}{n^d} \int\limits_{\R^p}\left(\sum\limits_{\mbf{k}<\mbf{j}\leq\mbf{m}}\mathbbm{1}_{\{X_\mbf{j}\leq\mbf{x}\}} - \frac{[\mbf{m}-\mbf{k}]}{n^d}\sum\limits_{\undnr{1}\leq\mbf{j}\leq\undnr{n}}\mathbbm{1}_{\{X_\mbf{j}\leq \mbf{x}\}} \right)^2 w(\mbf{x})d\mbf{x}\]
then satisfies
\[T_{n,w} \Rightarrow \sup\limits_{\undnr{0}\leq \mbf{s}<\mbf{t}\leq\undnr{1}}\left\|W(\mbf{s},\mbf{t}] - [\mbf{t}-\mbf{s}]W(\undnr{1})\right\|^2 = T_w,\]
where $\{W(\mbf{t})\}_{\mbf{t}\in[0,1]^d}$ is a Brownian sheet in $L^2(\R^p,w)$ and $W(\undnr{1})$ has the covariance operator $S\in\mathcal{S}\left(L^2(\R^p,w)\right)$ defined by 
\begin{equation*}
\sprod{Sx,y} = \sum_{\mbf{k}\in\Z^d}\mathrm{E}\left[\int_{\R^p}\left(\mathbbm{1}_{\{X_{\undnr{0}}\leq\mbf{t}\}}- F(\mbf{t})\right)x(\mbf{t}) w(\mbf{t})d\mbf{t} \int_{\R^p}\left(\mathbbm{1}_{\{X_{\mbf{k}}\leq\mbf{t}\}}- F(\mbf{t})\right)y(\mbf{t}) w(\mbf{t})d\mbf{t}\right],
\end{equation*}
for $x,y\in L^2(\R^p,w)$.
\end{corollary}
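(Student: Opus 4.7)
The plan is to view the indicator functions $Y_{\mathbf{j}} := \mathbbm{1}_{\{X_\mathbf{j}\leq \cdot\}}$ as $L^2(\R^p,w)$-valued random elements and then reduce Corollary \ref{cor:chDistr} to an application of Corollary \ref{cor:chpTest} with $H = L^2(\R^p,w)$ and $\mu = F$, followed by the continuous mapping theorem (to square the norm). Since $|Y_\mathbf{j}(\mathbf{t})|\leq 1$ and $w$ is integrable, each $Y_\mathbf{j}$ is a.s. an element of $L^2(\R^p,w)$ with $\|Y_\mathbf{j}\|^2\leq \int w$, so the map $\omega\mapsto Y_\mathbf{j}(\omega)$ is a bounded $H$-valued random element (Borel measurability follows by showing that for every $h\in L^2(\R^p,w)$ the scalar random variable $\sprod{Y_\mathbf{j},h}=\int \mathbbm{1}_{\{X_\mathbf{j}\leq \mathbf{t}\}}h(\mathbf{t})w(\mathbf{t})d\mathbf{t}$ is measurable, which is standard via Fubini).

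Next I would verify the hypotheses of Theorem \ref{thm:FCLT} for $\{Y_\mathbf{j}\}_{\mathbf{j}\in\Z^d}$. Strict stationarity of $\{Y_\mathbf{j}\}$ is inherited from that of $\{X_\mathbf{j}\}$ because $Y_\mathbf{j}$ is a fixed measurable function of $X_\mathbf{j}$. Since $\sigma(Y_\mathbf{j}:\mathbf{j}\in M)\subseteq\sigma(X_\mathbf{j}:\mathbf{j}\in M)$ for every $M\subseteq\Z^d$, the mixing coefficients $\rho_{\R,Y}(\cdot)$ and $\alpha_{1,1,Y}(\cdot)$ are dominated by the corresponding coefficients of $X$, so both the $\rho_\R$-mixing property and the summability assumption transfer. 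The integrability assumption 1 of Theorem \ref{thm:FCLT} is trivial because $\|Y_{\undnr{1}}\|$ is uniformly bounded, so all moments are finite for any $\delta>0$. By Fubini (as displayed in the text preceding the corollary), $\mathrm{E}\sprod{Y_{\undnr{1}},h}=\sprod{F,h}$ for every $h\in L^2(\R^p,w)$, which identifies $\mathrm{E}Y_{\undnr{1}}=F$ in the Hilbert space sense.

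Applying Corollary \ref{cor:chpTest} to $\{Y_\mathbf{j}\}$ with $\mu=F$ then yields
\[
T_n^{Y} := \max\limits_{\undnr{0}\leq\mbf{k}<\mbf{m}\leq\undnr{n}}\frac{1}{n^{d/2}}\Bigl\|\sum\limits_{\mbf{k}<\mbf{j}\leq\mbf{m}}Y_\mbf{j} - \frac{[\mbf{m}-\mbf{k}]}{n^d}\sum\limits_{\undnr{1}\leq\mbf{j}\leq\undnr{n}}Y_\mbf{j}\Bigr\| \Rightarrow \sup\limits_{\undnr{0}\leq \mbf{s}<\mbf{t}\leq\undnr{1}}\|W(\mbf{s},\mbf{t}]-[\mbf{t}-\mbf{s}]W(\undnr{1})\|,
\]
with the covariance operator $S$ obtained by substituting $X_\mathbf{k}-\mu\leadsto Y_\mathbf{k}-F$ in \eqref{eq:CovOp}, which after unpacking the inner product via Fubini gives exactly the expression displayed in the corollary. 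Now the key observation is that by the definition of the norm on $L^2(\R^p,w)$,
\[
(T_n^Y)^2 = \max\limits_{\undnr{0}\leq\mbf{k}<\mbf{m}\leq\undnr{n}}\frac{1}{n^{d}}\int_{\R^p}\Bigl(\sum\limits_{\mbf{k}<\mbf{j}\leq\mbf{m}}\mathbbm{1}_{\{X_\mbf{j}\leq\mbf{x}\}} - \frac{[\mbf{m}-\mbf{k}]}{n^d}\sum\limits_{\undnr{1}\leq\mbf{j}\leq\undnr{n}}\mathbbm{1}_{\{X_\mbf{j}\leq\mbf{x}\}}\Bigr)^2 w(\mbf{x})d\mbf{x} = T_{n,w},
\]
so $T_{n,w}=(T_n^Y)^2$. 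An application of the continuous mapping theorem (with the squaring map $\R_{\geq 0}\to\R_{\geq 0}$) finishes the argument.

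The only step that requires a bit of care is the verification that $Y_\mathbf{j}$ defines a Borel-measurable $L^2(\R^p,w)$-valued element and that the two moment/mixing hypotheses transfer; the former is standard and the latter reduces to the $\sigma$-algebra inclusion. Everything else is a direct translation through the inner product of $L^2(\R^p,w)$ and the continuous mapping theorem, so no genuinely new estimates arise beyond those already established in Theorem \ref{thm:FCLT}.
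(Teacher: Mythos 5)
Your proposal is correct and matches the paper's own (very brief) justification: the paper likewise identifies $F$ as the $L^2(\R^p,w)$-mean of $\mathbbm{1}_{\{X_\mbf{j}\leq\cdot\}}$ via Fubini, notes that the mixing coefficients are preserved under the measurable map $x\mapsto\mathbbm{1}_{\{x\leq\cdot\}}$ and that the moment condition holds by boundedness and integrability of $w$, and then translates Corollary~\ref{cor:chpTest}, with $T_{n,w}=(T_n^Y)^2$ handled by continuous mapping. No discrepancy to report.
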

Note that since $x \mapsto \mathbbm{1}_{\{x\leq \cdot\}}$ is a measurable bijection, the mixing properties of $\{X_\mbf{j}\}_{\mbf{j}\in\Z^d}$ are preserved. Due to the non-negativity and integrability of $w$, the moment condition of Theorem \ref{thm:FCLT} is satisfied.
\end{subsection}

\begin{subsection}{Dependent wild bootstrap for change-point detection}
We formulate our theorem on the consistency of the bootstrap version of the partial sum process for Hilbert space valued random fields. 

\begin{theorem}\label{theo2} Let the assumptions of Theorem \ref{thm:FCLT} hold and assume additionally that 
\begin{equation}
\sum_{m\geq 1}m^{d-1}\alpha_{2,2}(m)^{\delta/(2+\delta)}<\infty
\label{eq:assAlpha2}
\end{equation} 
and $\mathrm{E}\|X_{\undnr{1}}\|^{4+2\delta} <\infty$. Furthermore, let $\{V_{n,1}(\mathbf{i})\}_{\undnr{1}\leq\mathbf{i}\leq\undnr{n}},\ldots, \{V_{n,K}(\mathbf{i})\}_{\undnr{1}\leq\mathbf{i}\leq\undnr{n}}$ ($K\in\N$) be independent copies of the same dependent multiplier field. \footnote{Note that the restriction on $\omega$ is weaker than the restriction $\omega(\mbf{j}/q)=0$ for $\mbf{j}$ with $\max_i|j_i| \geq q$ used in \cite{bHeuser}, but since the proofs remain essentially unaffected, all results from that paper are still applicable.}
Lastly, let the bandwidth $q=q_n$ fulfill $q_n\rightarrow\infty$ and $q_n=o(\sqrt{n})$. Then
\begin{equation*}
\left(S_n,S_{n,1}^\star,\ldots,S_{n,K}^\star\right)\Rightarrow\left(W,W_1^\star,\ldots,W_K^\star \right) \quad\text{in $D_H([0,1]^d)^{K+1}$}
\end{equation*}
where $S_n$ is the partial sum process \eqref{eq:partialSum}, $S_{n,1}^\star,\ldots,S_{n,K}^\star$ are bootstrapped partial sum processes defined as in \eqref{eq:bootstrapDef} and $W_1^\star,\ldots,W_K^\star$ are independent copies of the Hilbert space valued Brownian sheet $W$ from Theorem \ref{thm:FCLT}. 
\end{theorem}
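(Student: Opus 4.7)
My plan is to exploit the conditional Gaussianity of the bootstrap processes given the data, so that the joint weak convergence reduces to conditional convergence of covariance operators, joint tightness in $D_H([0,1]^d)^{K+1}$, and automatic independence of the limits stemming from the conditional construction. Setting $\tilde S_{n,k}^\star(\mathbf{t})=n^{-d/2}\sum_{\undnr{1}\leq\mathbf{i}\leq\gaus{n\mathbf{t}}}V_{n,k}(\mathbf{i})(X_\mathbf{i}-\mu)$, I first show $\sup_\mathbf{t}\|S_{n,k}^\star(\mathbf{t})-\tilde S_{n,k}^\star(\mathbf{t})\|=o_P(1)$. For $\hat\mu=\bar X_n$, Theorem~\ref{thm:FCLT} gives $\|\bar X_n-\mu\|=O_P(n^{-d/2})$, and a Gaussian maximal inequality shows that the scalar partial sum $n^{-d/2}\sum_{\mathbf{i}\leq\gaus{n\mathbf{t}}}V_{n,k}(\mathbf{i})$ has variance $O(q^d)$ (by the bandwidth condition on $\omega$) and is uniformly $O_P(q^{d/2})$, so the remainder is $O_P((q/n)^{d/2})=o_P(1)$. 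For $\hat\mu=\tilde\mu$ the size restriction on $\hat C_n$ yields the same rate under $H$ on each of the two regions $\hat C_n$ and $\hat C_n^c$.

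\emph{Finite-dimensional convergence.} Conditional on $\mathcal{X}=\sigma(X_\mathbf{j})$, each $\tilde S_{n,k}^\star$ is a centered Gaussian process in $D_H([0,1]^d)$, the $K$ copies are conditionally independent, and they are independent of $S_n$; their cross-covariance operator is
\[\sprod{C_n^{(\mathbf{s},\mathbf{t})}h_1,h_2}=\frac{1}{n^d}\sum_{\mathbf{i}\leq\gaus{n\mathbf{s}},\ \mathbf{j}\leq\gaus{n\mathbf{t}}}\omega\bigl((\mathbf{i}-\mathbf{j})/q\bigr)\sprod{X_\mathbf{i}-\mu,h_1}\sprod{X_\mathbf{j}-\mu,h_2}.\]
Conditioning on $\mathcal{X}$ inside the joint characteristic function factors it as $\exp(iL_0(S_n))\prod_{k=1}^K\exp(-\tfrac12 Q_{n,k})$ with $Q_{n,k}$ a quadratic form in $C_n^{(\cdot,\cdot)}$, so fidi convergence reduces (after polarization) to the pointwise-in-probability statement $\sprod{C_n^{(\mathbf{s},\mathbf{t})}h_1,h_2}\xrightarrow{P}\lambda((\undnr{0},\mathbf{s}\wedge\mathbf{t}])\sprod{Sh_1,h_2}$ combined with $S_n\Rightarrow W$ from Theorem~\ref{thm:FCLT}. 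The mean converges by dominated convergence, using the absolute convergence of \eqref{eq:CovOp} together with $\omega(\undnr{0})=1$, continuity of $\omega$ at zero, and $q\to\infty$, $q=o(\sqrt n)$. The variance is a quadruple sum in $\omega\omega'$ and fourth-order covariances $\Cov(\sprod{X_\mathbf{i},h_1}\sprod{X_\mathbf{j},h_2},\sprod{X_{\mathbf{i}'},h_1}\sprod{X_{\mathbf{j}'},h_2})$; partitioning index quadruples by their maximum pairwise separation and applying Davydov's covariance inequality with the stronger mixing rate \eqref{eq:assAlpha2} and the moment $\mathrm{E}\|X_{\undnr{1}}\|^{4+2\delta}<\infty$ drives this to $o(1)$.

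\emph{Joint tightness and independence.} Tightness of $S_n$ is given by Theorem~\ref{thm:FCLT}. For each bootstrap, the Gaussian moment bound for Hilbert-valued variables gives
\[\mathrm{E}\bigl[\|\tilde S_{n,k}^\star(B)\|^{2+\delta}\mid\mathcal{X}\bigr]\leq C_\delta\bigl(\operatorname{tr} C_n^{(B)}\bigr)^{(2+\delta)/2},\]
with $\operatorname{tr} C_n^{(B)}=n^{-d}\sum_{\mathbf{i},\mathbf{j}\in B_n}\omega((\mathbf{i}-\mathbf{j})/q)\sprod{X_\mathbf{i}-\mu,X_\mathbf{j}-\mu}$ of mean $\lambda(B)\operatorname{tr}(S)+o(1)$ and, by the trace version of the Davydov bound above, of $L^{(2+\delta)/2}$-norm of order $\lambda(B)^{(2+\delta)/2}$. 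This is the block-increment moment inequality needed to invoke the Moricz-type maximal inequality used in the proof of Theorem~\ref{thm:FCLT}, giving tightness of each $\tilde S_{n,k}^\star$ and hence joint tightness of the whole tuple. The conditional independence of the bootstraps given $\mathcal{X}$, together with the deterministic limit of $C_n^{(\cdot,\cdot)}$, makes the limits $W_1^\star,\ldots,W_K^\star$ mutually independent and independent of $W$.

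\emph{Main obstacle.} The most delicate point is the $L^2$-fluctuation control of $\sprod{C_n^{(\mathbf{s},\mathbf{t})}h_1,h_2}$ (and of its trace analogue used for tightness). The required estimate of the fourfold sum $\sum\omega\omega'\Cov(\cdot,\cdot)$ calls for a careful partition of the index quadruples by their maximum pairwise distance and repeated application of Davydov's inequality with $\alpha_{2,2}(r)$; this combinatorial bookkeeping is exactly where the assumptions \eqref{eq:assAlpha2} and $\mathrm{E}\|X_{\undnr{1}}\|^{4+2\delta}<\infty$ of Theorem~\ref{theo2}, strengthening those of Theorem~\ref{thm:FCLT}, are put to work.
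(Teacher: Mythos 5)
Your finite-dimensional convergence argument is sound and close in spirit to the paper's: the convergence in probability of the conditional covariance operators is exactly the content of Lemma \ref{rem:LRV} (mean via Lavancier's argument, fluctuation via the summability of fourth-order covariances under \eqref{eq:assAlpha2}, which is where $q=o(\sqrt n)$ enters through the bound $q^{2d}/n^d\to 0$), and your characteristic-function factorization is a clean alternative to the paper's subsequence/almost-sure argument. Replacing $\hat\mu$ by $\mu$ at the process level is also a legitimate variant of the paper's reliance on the estimator comparison $|\hat\Sigma_n(B)-\hat\Sigma_{Y,n}(B)|\xrightarrow{\Pr}0$ from \cite{bHeuser}.

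The genuine gap is in the tightness step, in two places. First, the moment bound you propose, $\mathrm{E}\big[\|\tilde S^\star_{n,k}(B)\|^{2+\delta}\big]\le C\,\mathrm{E}\big[(\operatorname{tr}C_n^{(B)})^{(2+\delta)/2}\big]\le C\lambda(B)^{(2+\delta)/2}$, does not hold uniformly over blocks. Writing $T_B=\operatorname{tr}C_n^{(B)}$, one has $\mathrm{E}[T_B]=O(\#B_n/n^d)$ as you say, but the fluctuation only satisfies $\|T_B-\mathrm{E}T_B\|_2=O(q^d\sqrt{\#B_n}/n^d)$ (Minkowski over the $O(q^d)$ lags, then the $\alpha_{2,2}$-summability for each lag), and this dominates $\#B_n/n^d$ whenever $\#B_n\lesssim q^{2d}$. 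Since $q\to\infty$, such blocks exist for every $n$, so the hypothesis $\mathrm{E}\|\cdot\|^r\le C(\#B_n/n^d)^{r/2}$ needed for the M\'oricz maximal inequality is not delivered by the conditional-Gaussian-trace route. The paper circumvents this by conditioning the other way around: Lemmas \ref{lemboot1} and \ref{lemboot2} establish an unconditional Rosenthal inequality for $\sum_{\mbf{k}\in S}(X_\mbf{k}-\mu)V_{n,i}(\mbf{k})$ by freezing $V$ and exploiting the $\rho_\R$-mixing of $X$; the resulting constant involves only the marginal moments of the multipliers and is independent of $q$ and of their dependence structure, which is the whole point. Second, even granting the modulus-of-continuity control, stochastic boundedness plus equicontinuity is not sufficient for tightness in $D_H([0,1]^d)$ when $H$ is infinite dimensional: bounded sets in $H$ are not relatively compact, so one must additionally rule out escape of mass into the tail coordinates. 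The paper handles this through condition (c) of Lemma \ref{le:ChenWhite}, verified by applying the maximal inequalities to $\{A_k(X_\mbf{j})\}$ and to $V_{n,j}(\mbf{i})A_k(X_\mbf{i}-\mu)$ together with \eqref{eq:momBoundV}; your proposal works only with the full $H$-norm and never addresses this finite-dimensional approximation step.
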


The additional assumption \eqref{eq:assAlpha2} and the stronger integrability assumption are used to obtain the convergence of the implicit long-run variance estimators (see Lemma \ref{rem:LRV} below). Alternatively, in order to use the proof presented, for instance, in \cite{lavancier2008} for bandwidths $q=o(n)$, one could replace assumption \eqref{eq:assAlpha2} by even stronger mixing and integrability conditions (see, e.g., \cite{guyon1995}, Lemma 4.6.2) in order to obtain the summability of the fourth-order cumulants (see Assumption (Y2) in \cite{bHeuser}). 

Write $T_{n,1}^\star,\ldots,T_{n,K}^\star$ and $T_{n,w,1}^\star,\ldots,T_{n,w,K}^\star$ for the bootstrapped analogues of the above change-point statistics $T_n$ and $T_{n,w}$, where $X_\mbf{j}$ and $\mathbbm{1}_{\{X_\mbf{j}\leq \cdot\}}$ are replaced by 
$V_{n,l}(\mathbf{j})\left\{X_{\mathbf{j}}-\hat{\mu}(\mbf{j})\right\}$ and $V_{n,l}(\mathbf{j})\left\{\mathbbm{1}_{\{X_\mbf{j}\leq \cdot\}}-\hat{\mu}(\mbf{j})\right\}$, respectively ($l=1,\ldots,K$). As a direct consequence of Theorem \ref{theo2}, we obtain the same limit distributions as for the original statistics:
\begin{corollary}
\begin{itemize}
	\item[(a)] Let the assumptions of Theorem \ref{theo2} hold. Then it holds that
	\[(T_n,T_{n,1}^\star,\ldots,T_{n,K}^\star) \Rightarrow (T,T_1^\star,\ldots,T_K^\star),\]
where $T_1^\star,\ldots,T_K^\star$ are independent copies of $T$.	
	\item[(b)] Let $\{X_\mbf{j}\}_{\mbf{j}\in\Z^d}$ be an $\R^p$-valued stationary random field that fulfills the assumptions of Corollary \ref{cor:chDistr} and \eqref{eq:assAlpha2}. Let $\{V_{n,1}(\mbf{j})\}_{\undnr{1}\leq\mbf{j}\leq\undnr{n}}$,\ldots,$\{V_{n,K}(\mbf{j})\}_{\undnr{1}\leq\mbf{j}\leq\undnr{n}}$ be as in Theorem \ref{theo2}. Then it holds that
\[(T_{n,w},T_{n,w,1}^\star,\ldots,T_{n,w,K}^\star) \Rightarrow (T_w,T_{w,1}^\star,\ldots, T_{w,K}^\star),\]
where $T_{w,1}^\star,\ldots,T_{w,K}^\star$ are independent copies of $T_w$.	
\end{itemize}
\end{corollary}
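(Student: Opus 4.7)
The plan is to deduce the corollary directly from Theorem~\ref{theo2} via the continuous mapping theorem, applied to a common functional $\Phi:D_H([0,1]^d)\to\R$ chosen so that $T_n=\Phi(S_n)$, $T_{n,l}^\star=\Phi(S_{n,l}^\star)$, and, for part~(b), so that $T_{n,w}$ and its bootstrap copies equal $\Phi^2$ applied to the appropriate $L^2(\R^p,w)$-valued partial sum processes.

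First I would fix
\[
\Phi(x)=\sup_{\undnr{0}\leq\mbf{s}<\mbf{t}\leq\undnr{1}}\bigl\|x(\mbf{s},\mbf{t}]-[\mbf{t}-\mbf{s}]\,x(\undnr{1})\bigr\|,\qquad x\in D_H([0,1]^d),
\]
and check the identifications just claimed. For $T_n=\Phi(S_n)$ this boils down to two observations: $S_n$ is piecewise constant on the lattice $\{0,1/n,\ldots,1\}^d$, so the continuous supremum reduces to the discrete maximum over $\mbf{s}=\mbf{k}/n$, $\mbf{t}=\mbf{m}/n$; and the centering constant $\mu$ cancels in the increment $S_n(\mbf{s},\mbf{t}]-[\mbf{t}-\mbf{s}]S_n(\undnr{1})$. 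For the bootstrap the same piecewise-constant observation yields $T_{n,l}^\star=\Phi(S_{n,l}^\star)$ directly, since the subtraction of $\hat\mu(\mbf{j})$ is already built into $S_{n,l}^\star$ and the second term in $\Phi$ produces the empirical mean appearing in $T_{n,l}^\star$. Taking the Hilbert space $L^2(\R^p,w)$ and the random field $Y_\mbf{j}=\mathbbm{1}_{\{X_\mbf{j}\leq\cdot\}}$ yields the analogous identification $T_{n,w}=\Phi(\tilde S_n)^2$ (with $F=\mathrm{E}Y_\mbf{j}$ playing the role of $\mu$), and likewise $T_{n,w,l}^\star=\Phi(\tilde S_{n,l}^\star)^2$ for each bootstrap copy.

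Next I would argue that $\Phi$ is continuous at every $x\in C_H([0,1]^d)$: Skorokhod convergence $x_n\to x$ to a continuous $x$ forces $\|x_n-x\|_\infty\to 0$ (as in the real-valued case), whence the increments $x_n(\mbf{s},\mbf{t}]$ converge to $x(\mbf{s},\mbf{t}]$ uniformly in $\mbf{s},\mbf{t}$ and $\Phi(x_n)\to\Phi(x)$. The product functional $(x_0,\ldots,x_K)\mapsto(\Phi(x_0),\ldots,\Phi(x_K))$ is therefore continuous on $C_H([0,1]^d)^{K+1}$, which carries the full mass of the limit law in Theorem~\ref{theo2} because the Brownian sheets $W,W_1^\star,\ldots,W_K^\star$ almost surely take values in $C_H([0,1]^d)$. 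The continuous mapping theorem then delivers part~(a), with the independence of $T_1^\star,\ldots,T_K^\star$ inherited from that of $W_1^\star,\ldots,W_K^\star$.

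For part~(b) one further check is needed: that Theorem~\ref{theo2} applies to the $L^2(\R^p,w)$-valued field $(Y_\mbf{j})$. Since $\mbf{x}\mapsto\mathbbm{1}_{\{\mbf{x}\leq\cdot\}}$ is a measurable bijection, the $\rho_\R$, $\alpha_{1,1}$ and $\alpha_{2,2}$ coefficients of $(Y_\mbf{j})$ coincide with those of $(X_\mbf{j})$, so the mixing and summability hypotheses transfer verbatim; and $\|Y_\mbf{j}\|^2\leq\int_{\R^p}w(\mbf{x})\,d\mbf{x}<\infty$ makes all moment assumptions trivially satisfied. A final CMT application with the continuous functional $\Phi(\cdot)^2$ then closes part~(b). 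The main obstacle I anticipate is just the bookkeeping in Step~1 (the discrete-versus-continuous supremum reduction and the cancellation of the means for the unbootstrapped statistics); once those identifications are in place the rest is a routine continuous mapping argument.
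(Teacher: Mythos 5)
Your proposal is correct and follows essentially the same route as the paper, which presents the corollary as a direct consequence of Theorem \ref{theo2} via the continuous mapping theorem and gives no separate proof. The only point to tighten is the identification $T_n=\Phi(S_n)$: because $[\mbf{t}-\mbf{s}]$ varies continuously while the increments of $S_n$ are piecewise constant, the continuous supremum exceeds the lattice maximum by at most a term of order $n^{-1}\|S_n(\undnr{1})\|$, which converges to zero in probability, so a one-line Slutsky step closes the bookkeeping gap you already anticipate.
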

Using this corollary, we can obtain critical values for the test statistic $T_n$ (and analogously for $T_{n,w}$) in the following way: Simulate the $K$ conditionally independent copies $T^\star_{n,1},\ldots,T^\star_{n,K}$. For a given significance level $\alpha\in(0,1)$, calculate the $(1-\alpha)$ sample quantile $q^\star_{n,K}(1-\alpha)$ of $T^\star_{n,1},\ldots,T^\star_{n,K}$ and reject the hypothesis of stationarity if $T_n\geqq^\star_{n,K}(1-\alpha)$. Then Lemma F.1 in \cite{supplBuecher} yields $\lim_{K\to\infty}\lim_{n\to\infty}\Pr\left(T_n\geq q^\star_{n,K}(1-\alpha)\right)=\alpha$.
\end{subsection}
\end{section}

\begin{section}{Simulation study}
To illustrate the finite sample behavior of the Cram\'er-von Mises type change-point test (using $T_{n,w}$) with dependent wild bootstrap, we present the results of a small simulation study. We use the density of the $\mathcal{N}(100,1000^2)$-distribution as a weight function $w$ to define the Hilbert space $L^2(\R,w)$. As a data generating process, we use an autoregressive process
\[Y_\mbf{k} =  a Y_{k_1-1,k_2}+a Y_{k_1,k_2-1} - a^2 Y_{k_1-1,k_2-1} + \epsilon_{k_1,k_2}, \quad \mbf{k}\in\{1,\ldots,n\}^2 \]
for dimension $d=2$, where the parameter $a$, which reflects the dependence structure of the process, takes the values $a=0.2, 0.5$ and the innovations $\{\epsilon_\mbf{k}\}_{\mbf{k}\in\Z^d}$ are i.i.d. \mbox{$\mathcal{N}(0,(1-a^2)^d)$}-distributed. Applying the results in \cite{doukhan1994}, Section 2.1.1, it can be seen that this process fulfills the mixing assumptions of Theorems \ref{thm:FCLT} and \ref{theo2}. We use sample sizes $n=30, 40,50$.
We consider two types of changes in distribution, changes in the mean and changes in the skewness of the process, each over a change-set of the form $C=(\btheta,\bgamma]$ ($\undnr{0}<\btheta<\bgamma<\undnr{1}$). For the change in mean, we consider	
\[X^{(1)}_\mbf{k}=Y_\mbf{k}+\Delta \mathbbm{1}_{C_n}(\mbf{k}),\quad \mbf{k}\in \{1,\ldots,n\}^d,\]
with $\Delta=0,0.5,1$, $C_n=(\gaus{n\btheta},\gaus{n\bgamma}]\cap\Z^d$. For the change in skewness, we use the same approach as in \cite{sharipov2014} and simulate a second data generating process $\{Y'_\mbf{k}\}_{\mbf{k}\in\{1,\ldots,n\}^d}$ which is independent of $\{Y_\mbf{k}\}_{\mbf{k}\in\{1,\ldots,n\}^d}$, using the same scheme as for $\{Y_\mbf{k}\}_{\mbf{k}\in\{1,\ldots,n\}^d}$. We define 
\[X^{(2)}_\mbf{k}=\begin{cases}Y^2_\mbf{k}+Y^{'2}_\mbf{k}, & \mbf{k}\notin C_n \\ 4- (Y^2_\mbf{k}+Y^{'2}_\mbf{k}), & \mbf{k}\in C_n. \end{cases} \]
In order to investigate the effect of the volume (Vol) of the change-set on the power, we consider three different examples, where $C^{(1)}=(\btheta_1,\bgamma_1]$ is a small block, $C^{(2)}=(\btheta_2,\bgamma_2]$ is medium-sized and $C^{(3)}=(\btheta_3,\bgamma_3]$ is large (see Table \ref{tab:examplesShort}).
\begin{table*}
\footnotesize
\centering
\begin{tabular}{cccccc}
   Example 1 &  & Example 2  &  & Example 3 \\ \cmidrule{1-1} \cmidrule{3-3} \cmidrule{5-5}
  $ C^{(1)}=\begin{matrix}\left(\left(\begin{matrix}0.2\\0.3\end{matrix}\right),
    \left(\begin{matrix}0.6\\0.55\end{matrix}\right)\right]\\
    \textrm{Vol$=0.1$}\end{matrix}$
     &&$ C^{(2)}=\begin{matrix} \left(\left(\begin{matrix}0.1\\0.1\end{matrix}\right),
    \left(\begin{matrix}0.9\\0.85\end{matrix}\right)\right] \\ \textrm{Vol$=0.6$} \end{matrix}$ &&  $ C^{(3)}=\begin{matrix} \left(\left(\begin{matrix}0.05\\0.1\end{matrix}\right),
    \left(\begin{matrix}0.95\\1.0\end{matrix}\right)\right]\\ \textrm{Vol$=0.81$} \end{matrix}$\\
  \bottomrule
\end{tabular}
	\caption{Change sets $C^{(1)}$, $C^{(2)}$, $C^{(3)}$ with $C^{(i)}=(\btheta_i,\bgamma_i]$ for $i=1,2,3$ and corresponding volumes for the different examples.}\label{tab:examplesShort}
	\end{table*}
We compare two bootstrap methods:
	\begin{itemize}
		\item Discretely sampled Ornstein-Uhlenbeck sheets (autoregressive wild bootstrap (\textbf{AR}))
	\[V_n(\mbf{k}) = a V_n(k_1-1,k_2)+a V_n(k_1,k_2-1) - a^2 V_n(k_1-1,k_2-1) + \varepsilon_{n,k_1,k_2}, \]
with $a=\exp\left(-1/q(n)\right)$ and i.i.d. $\mathcal{N}\left(0,(1-a^2)^{d}\right)$-distributed innovations $\varepsilon_{n,\mbf{k}}$. This corresponds to the exponential kernel function $\omega_{q(n),\mbf{j}}=\prod_{i=1}^d\exp\left(-\frac{|j_i|}{q(n)}\right)$. 	
			\item Moving average random fields (\textbf{MA}): Let $\{\varepsilon_{\mbf{j}}\}_{\mbf{j}\in\Z^d}$ be a random field of i.i.d. $\mathcal{N}(0,1)$-distributed r.v. For $a=(q(n)+1)^{-d/2}$ (i.e., $a=|B_{q(n)/2}|^{-1/2}$, with $B_{q(n)/2}:=\left\{-q(n)/2,\ldots,q(n)/2\right\}^d$), we consider the process defined by
\[V_n(\mbf{k})=a \sum\limits_{\mbf{j}\in B_{q(n)/2}}\varepsilon_{\mbf{k}-\mbf{j}}. \]
			This corresponds to the Bartlett-type kernel function $\omega_{q(n),\mbf{j}}=\prod_{i=1}^d \left(1 - \frac{|j_i|}{q(n)+1}\right)^+$.		
	\end{itemize}
	For both methods, we consider $q=2, 6, 10$ to cover a wide range of possible bandwidths. We use the mean estimators $\hat{\mu}=F_n$ and 
	\[\hat{\mu}(\mbf{k})=\tilde{F}_n(\mbf{k})=\begin{cases}
	\frac{1}{\#\hat{C}_n}\sum_{\mathbf{i}\in\hat{C}_n}\mathbbm{1}_{\{X_{\mathbf{i}}\leq\cdot\}}\ \ \ &\text{if }\mathbf{k}\in\hat{C}_n,\\
\frac{1}{\#\hat{C}_n^c}\sum_{\mathbf{i}\notin\hat{C}_n}\mathbbm{1}_{\{X_{\mathbf{i}}\leq\cdot\}}\ \ \ &\text{if }\mathbf{k}\notin\hat{C}_n
\end{cases} \]
(see section 1.2). The change-set estimator $\hat{C}_n=(\hat{\mbf{k}},\hat{\mbf{m}}]$ used for $\tilde{F}_n$ is obtained by taking the maximizing values for the test statistic $T_{n,w}$ as estimators $\hat{\mbf{k}}$ and $\hat{\mbf{m}}$. This approach is very common in the literature, see, e.g., \cite{csorgoHorvath}, p.50. Alternatives would be the maximum likelihood or the least squares estimator.

The empirical size and power of the tests are estimated using $N=500$ repetitions, for each of which $J=500$ wild bootstrap-iterations are used to derive the critical values. The nominal size was chosen as $\alpha=0.05, 0.1$. Table \ref{tableHypothesisDim2} shows the empirical size of the tests. Unsurprisingly, for both choices of $a$ the empirical size depends strongly on the bandwidth $q$, which is a measure of the dependence of the bootstrap process. The greater $q$, the greater the dependence in the bootstrap sample and the smaller the empirical size of the test. For $\hat{\mu}=F_n$ and $a=0.2$, the nominal size is always held for $q=10$ and can be adequately held for $q=6$, whereas the  empirical size for $a=0.5$ tends to be greater than the nominal one even for $q=10$. For $\hat{\mu}=\tilde{F}_n$, the empirical size is much larger than the nominal one for all choices of $a$ and $q$. The over-rejection under the null hypothesis seems to be typical for bootstrap methods (see \cite{doukhan2015}). Conversely, under the alternative, the empirical power decreases with rising bandwidth $q$, but the effect is more pronounced for $\hat{\mu}=F_n$ than for $\hat{\mu}=\tilde{F}_n$ (see, e.g., Tables \ref{table2DimBootstrap_a0.2.txt_Delta0.5} and \ref{table2DimBootstrapWChEst_a0.2.txt_Delta0.5}). It is thus difficult to give a recommendation regarding the bandwidth.

This effect is however less important than the choice of change-set for the power of the test: Where both the change in mean and the change in skewness are well detected for medium-sized and large change-sets (Examples 2 and 3), the empirical power for small change-sets (Example 1) can be very small for $q=6,10$ (see Tables \ref{table2DimBootstrap_a0.2.txt_Delta0.5}, \ref{table2DimBootstrap_a0.5.txt_Delta0.5}, \ref{table2DimBootstrap_a0.2.txt_Delta1}, \ref{table2DimBootstrap_a0.5.txt_Delta1}, \ref{table2DimBootstrapSkewness_a0.2.txt} and \ref{table2DimBootstrapSkewness_a0.5.txt}). Again, the tests based on $\tilde{F}_n$ have a higher empirical power than the tests based on $F_n$ and retain their good detection properties even for small change-sets (see Tables \ref{table2DimBootstrapWChEst_a0.2.txt_Delta0.5}, \ref{table2DimBootstrapWChEst_a0.5.txt_Delta0.5}, \ref{table2DimBootstrapWChEst_a0.2.txt_Delta1}, \ref{table2DimBootstrapWChEst_a0.5.txt_Delta1}, \ref{table2DimBootstrapSkewnessChEst_a0.2.txt} and \ref{table2DimBootstrapSkewnessChEst_a0.5.txt}).
The tests perform better under weaker dependence in the observations, but for medium-sized and large change-sets the empirical power is good for both choices of $a$ and $\Delta=0.5$ and excellent for $\Delta=1$. Except for small change-sets and $\hat{\mu}=F_n$ (see, e.g., Table \ref{table2DimBootstrapSkewness_a0.5.txt}), the change in skewness is well detected by all procedures (see Tables \ref{table2DimBootstrapSkewness_a0.2.txt}-\ref{table2DimBootstrapSkewnessChEst_a0.5.txt}). Rising numbers $n$ of observations improve the empirical power of the tests.
The different choices of the random variables $\{V_n(\mathbf{i})\}_{\undnr{1}\leq\mathbf{i}\leq\undnr{n}}$ (\textbf{AR} or \textbf{MA}) do not seem to influence the power of the test strongly, with only slightly better empirical power under \textbf{MA} for $\hat{\mu}=F_n$ (see, e.g., Tables \ref{table2DimBootstrap_a0.5.txt_Delta0.5} and \ref{table2DimBootstrap_a0.2.txt_Delta1}).

\begin{table}[ht]
\centering
\caption{Hypothesis (stationarity)} 
\label{tableHypothesisDim2}
{\footnotesize
\resizebox{\columnwidth}{!}{%
\begin{tabular}{llll|rrc;{2pt/2pt}rrc;{2pt/2pt}rrc}
  \toprule 
 \multicolumn{4}{c}{} & \multicolumn{3}{c}{$n=30$} & \multicolumn{3}{c}{$n=40$} & \multicolumn{3}{c}{$n=50$}\\
 \cmidrule(l){5-7} \cmidrule(l){8-10} \cmidrule(l){11-13}
 & & & & q=2 & q=6 & q=10 & q=2 & q=6 & q=10 & q=2 & q=6 & q=10\\
 \midrule 
 \midrule\multirow{8}{*}{\textbf{$\hat{\mu}=F_n$}} & \multirow{4}{*}{\textbf{AR}} & \multirow{2}{*}{\textbf{$\alpha=0.05$}} & $\mathbf{a=0.2}$ & 0.18 & 0.01 & 0.00 & 0.19 & 0.04 & 0.00 & 0.17 & 0.04 & 0.01 \\ 
   &  &  & $\mathbf{a=0.5}$ & 0.58 & 0.08 & 0.00 & 0.65 & 0.14 & 0.03 & 0.67 & 0.15 & 0.04 \\ 
    &  & \multirow{2}{*}{\textbf{$\alpha=0.1$}} & $\mathbf{a=0.2}$ & 0.28 & 0.08 & 0.02 & 0.30 & 0.13 & 0.06 & 0.28 & 0.12 & 0.07 \\ 
   &  &  & $\mathbf{a=0.5}$ & 0.71 & 0.24 & 0.07 & 0.76 & 0.30 & 0.13 & 0.76 & 0.31 & 0.16 \\ 
   \cdashline{2-13} & \multirow{4}{*}{\textbf{MA}} & \multirow{2}{*}{\textbf{$\alpha=0.05$}} & $\mathbf{a=0.2}$ & 0.15 & 0.03 & 0.01 & 0.17 & 0.07 & 0.03 & 0.15 & 0.05 & 0.02 \\ 
   &  &  & $\mathbf{a=0.5}$ & 0.58 & 0.15 & 0.03 & 0.63 & 0.20 & 0.07 & 0.66 & 0.19 & 0.06 \\ 
   &  & \multirow{2}{*}{\textbf{$\alpha=0.1$}} & $\mathbf{a=0.2}$ & 0.28 & 0.12 & 0.03 & 0.28 & 0.15 & 0.09 & 0.26 & 0.13 & 0.09 \\ 
   &  &  & $\mathbf{a=0.5}$ & 0.71 & 0.29 & 0.13 & 0.76 & 0.34 & 0.18 & 0.77 & 0.33 & 0.19 \\ 
  \midrule\multirow{8}{*}{\textbf{$\hat{\mu}=\tilde{F}_n$}} & \multirow{4}{*}{\textbf{AR}} & \multirow{2}{*}{\textbf{$\alpha=0.05$}} & $\mathbf{a=0.2}$ & 0.26 & 0.18 & 0.13 & 0.24 & 0.17 & 0.14 & 0.20 & 0.13 & 0.13 \\ 
   &  &  & $\mathbf{a=0.5}$ & 0.68 & 0.40 & 0.31 & 0.71 & 0.38 & 0.29 & 0.71 & 0.35 & 0.26 \\ 
   &  & \multirow{2}{*}{\textbf{$\alpha=0.1$}} & $\mathbf{a=0.2}$ & 0.36 & 0.29 & 0.27 & 0.35 & 0.28 & 0.25 & 0.34 & 0.22 & 0.21 \\ 
   &  &  & $\mathbf{a=0.5}$ & 0.80 & 0.54 & 0.47 & 0.82 & 0.53 & 0.46 & 0.81 & 0.49 & 0.42 \\ 
   \cdashline{2-13} & \multirow{4}{*}{\textbf{MA}} & \multirow{2}{*}{\textbf{$\alpha=0.05$}} & $\mathbf{a=0.2}$ & 0.23 & 0.18 & 0.13 & 0.21 & 0.16 & 0.14 & 0.18 & 0.12 & 0.11 \\ 
   &  &  & $\mathbf{a=0.5}$ & 0.66 & 0.40 & 0.30 & 0.71 & 0.38 & 0.26 & 0.70 & 0.32 & 0.24 \\ 
   &  & \multirow{2}{*}{\textbf{$\alpha=0.1$}} & $\mathbf{a=0.2}$ & 0.32 & 0.26 & 0.25 & 0.33 & 0.24 & 0.23 & 0.29 & 0.19 & 0.18 \\ 
   &  &  & $\mathbf{a=0.5}$ & 0.77 & 0.51 & 0.44 & 0.79 & 0.49 & 0.41 & 0.80 & 0.46 & 0.38 \\ 
   \bottomrule 
\end{tabular}}
}
\end{table}

\begin{table}[ht]
\centering
\caption{Change in Mean, $\hat{\mu}=F_n$, a=0.2, $\Delta=0.5$} 
\label{table2DimBootstrap_a0.2.txt_Delta0.5}
{\footnotesize
\begin{tabular}{lllrrr|rrr|rrr}
  \toprule 
 \multicolumn{3}{c}{} & \multicolumn{3}{c}{$n=30$} & \multicolumn{3}{c}{$n=40$} & \multicolumn{3}{c}{$n=50$}\\
 \cmidrule(l){4-6} \cmidrule(l){7-9} \cmidrule(l){10-12}
 & & & q=2 & q=6 & q=10 & q=2 & q=6 & q=10 & q=2 & q=6 & q=10\\
 \midrule 
 \midrule\multirow{6}{*}{\textbf{AR}} & \multirow{3}{*}{\textbf{$\alpha=0.05$}} & \textbf{Ex. 1} & 0.43 & 0.03 & 0.00 & 0.66 & 0.20 & 0.03 & 0.83 & 0.34 & 0.05 \\ 
   &  & \textbf{Ex. 2} & 1.00 & 0.92 & 0.37 & 1.00 & 1.00 & 0.97 & 1.00 & 1.00 & 1.00 \\ 
    &  & \textbf{Ex. 3} & 0.81 & 0.38 & 0.07 & 0.97 & 0.85 & 0.57 & 1.00 & 0.99 & 0.93 \\ 
   \cdashline{2-12} & \multirow{3}{*}{\textbf{$\alpha=0.1$}} & \textbf{Ex. 1} & 0.57 & 0.21 & 0.04 & 0.78 & 0.43 & 0.19 & 0.91 & 0.61 & 0.30 \\ 
   &  & \textbf{Ex. 2} & 1.00 & 0.99 & 0.91 & 1.00 & 1.00 & 1.00 & 1.00 & 1.00 & 1.00 \\ 
   &  & \textbf{Ex. 3} & 0.90 & 0.66 & 0.42 & 0.99 & 0.94 & 0.87 & 1.00 & 1.00 & 0.99 \\ 
  \midrule\multirow{6}{*}{\textbf{MA}} & \multirow{3}{*}{\textbf{$\alpha=0.05$}} & \textbf{Ex. 1} & 0.43 & 0.12 & 0.01 & 0.65 & 0.33 & 0.10 & 0.85 & 0.50 & 0.18 \\ 
   &  & \textbf{Ex. 2} & 0.99 & 0.97 & 0.82 & 1.00 & 1.00 & 1.00 & 1.00 & 1.00 & 1.00 \\ 
   &  & \textbf{Ex. 3} & 0.81 & 0.54 & 0.27 & 0.96 & 0.88 & 0.80 & 1.00 & 1.00 & 0.98 \\ 
   \cdashline{2-12} & \multirow{3}{*}{\textbf{$\alpha=0.1$}} & \textbf{Ex. 1} & 0.56 & 0.28 & 0.10 & 0.79 & 0.51 & 0.29 & 0.91 & 0.72 & 0.45 \\ 
   &  & \textbf{Ex. 2} & 1.00 & 1.00 & 0.96 & 1.00 & 1.00 & 1.00 & 1.00 & 1.00 & 1.00 \\ 
   &  & \textbf{Ex. 3} & 0.88 & 0.74 & 0.56 & 0.99 & 0.94 & 0.90 & 1.00 & 1.00 & 1.00 \\ 
   \bottomrule 
\end{tabular}
}
\end{table}

\begin{table}[ht]
\centering
\caption{Change in Mean, $\hat{\mu}=\tilde{F}_n$, a=0.2, $\Delta=0.5$} 
\label{table2DimBootstrapWChEst_a0.2.txt_Delta0.5}
{\footnotesize
\begin{tabular}{lllrrr|rrr|rrr}
  \toprule 
 \multicolumn{3}{c}{} & \multicolumn{3}{c}{$n=30$} & \multicolumn{3}{c}{$n=40$} & \multicolumn{3}{c}{$n=50$}\\
 \cmidrule(l){4-6} \cmidrule(l){7-9} \cmidrule(l){10-12}
 & & & q=2 & q=6 & q=10 & q=2 & q=6 & q=10 & q=2 & q=6 & q=10\\
 \midrule 
 \midrule\multirow{6}{*}{\textbf{AR}} & \multirow{3}{*}{\textbf{$\alpha=0.05$}} & \textbf{Ex. 1} & 0.56 & 0.38 & 0.32 & 0.75 & 0.59 & 0.52 & 0.89 & 0.73 & 0.64 \\ 
   &  & \textbf{Ex. 2} & 1.00 & 0.99 & 0.98 & 1.00 & 1.00 & 1.00 & 1.00 & 1.00 & 1.00 \\ 
    &  & \textbf{Ex. 3} & 0.88 & 0.75 & 0.66 & 0.98 & 0.94 & 0.91 & 1.00 & 1.00 & 0.99 \\ 
   \cdashline{2-12} & \multirow{3}{*}{\textbf{$\alpha=0.1$}} & \textbf{Ex. 1} & 0.70 & 0.54 & 0.50 & 0.85 & 0.73 & 0.70 & 0.94 & 0.83 & 0.79 \\ 
   &  & \textbf{Ex. 2} & 1.00 & 1.00 & 1.00 & 1.00 & 1.00 & 1.00 & 1.00 & 1.00 & 1.00 \\ 
   &  & \textbf{Ex. 3} & 0.93 & 0.87 & 0.82 & 0.99 & 0.97 & 0.96 & 1.00 & 1.00 & 1.00 \\ 
  \midrule\multirow{6}{*}{\textbf{MA}} & \multirow{3}{*}{\textbf{$\alpha=0.05$}} & \textbf{Ex. 1} & 0.52 & 0.40 & 0.32 & 0.73 & 0.57 & 0.52 & 0.87 & 0.75 & 0.65 \\ 
   &  & \textbf{Ex. 2} & 0.99 & 0.99 & 0.98 & 1.00 & 1.00 & 1.00 & 1.00 & 1.00 & 1.00 \\ 
   &  & \textbf{Ex. 3} & 0.86 & 0.77 & 0.69 & 0.97 & 0.94 & 0.91 & 1.00 & 1.00 & 1.00 \\ 
   \cdashline{2-12} & \multirow{3}{*}{\textbf{$\alpha=0.1$}} & \textbf{Ex. 1} & 0.67 & 0.53 & 0.47 & 0.82 & 0.71 & 0.66 & 0.93 & 0.83 & 0.79 \\ 
   &  & \textbf{Ex. 2} & 1.00 & 1.00 & 1.00 & 1.00 & 1.00 & 1.00 & 1.00 & 1.00 & 1.00 \\ 
   &  & \textbf{Ex. 3} & 0.92 & 0.86 & 0.82 & 0.99 & 0.98 & 0.96 & 1.00 & 1.00 & 1.00 \\ 
   \bottomrule 
\end{tabular}
}
\end{table}

\begin{table}[ht]
\centering
\caption{Change in Mean, $\hat{\mu}=F_n$, a=0.5, $\Delta=0.5$} 
\label{table2DimBootstrap_a0.5.txt_Delta0.5}
{\footnotesize
\begin{tabular}{lllrrr|rrr|rrr}
  \toprule 
 \multicolumn{3}{c}{} & \multicolumn{3}{c}{$n=30$} & \multicolumn{3}{c}{$n=40$} & \multicolumn{3}{c}{$n=50$}\\
 \cmidrule(l){4-6} \cmidrule(l){7-9} \cmidrule(l){10-12}
 & & & q=2 & q=6 & q=10 & q=2 & q=6 & q=10 & q=2 & q=6 & q=10\\
 \midrule 
 \midrule\multirow{6}{*}{\textbf{AR}} & \multirow{3}{*}{\textbf{$\alpha=0.05$}} & \textbf{Ex. 1} & 0.70 & 0.12 & 0.00 & 0.79 & 0.24 & 0.05 & 0.84 & 0.32 & 0.09 \\ 
   &  & \textbf{Ex. 2} & 0.95 & 0.53 & 0.09 & 0.98 & 0.83 & 0.54 & 1.00 & 0.96 & 0.88 \\ 
    &  & \textbf{Ex. 3} & 0.80 & 0.25 & 0.03 & 0.90 & 0.46 & 0.19 & 0.96 & 0.68 & 0.44 \\ 
   \cdashline{2-12} & \multirow{3}{*}{\textbf{$\alpha=0.1$}} & \textbf{Ex. 1} & 0.82 & 0.30 & 0.09 & 0.88 & 0.45 & 0.22 & 0.91 & 0.48 & 0.30 \\ 
   &  & \textbf{Ex. 2} & 0.98 & 0.73 & 0.49 & 0.99 & 0.91 & 0.83 & 1.00 & 0.98 & 0.95 \\ 
   &  & \textbf{Ex. 3} & 0.89 & 0.48 & 0.24 & 0.96 & 0.67 & 0.48 & 0.98 & 0.82 & 0.67 \\ 
  \midrule\multirow{6}{*}{\textbf{MA}} & \multirow{3}{*}{\textbf{$\alpha=0.05$}} & \textbf{Ex. 1} & 0.69 & 0.20 & 0.04 & 0.80 & 0.34 & 0.11 & 0.85 & 0.40 & 0.17 \\ 
   &  & \textbf{Ex. 2} & 0.94 & 0.63 & 0.37 & 0.98 & 0.86 & 0.71 & 1.00 & 0.97 & 0.93 \\ 
   &  & \textbf{Ex. 3} & 0.80 & 0.36 & 0.14 & 0.91 & 0.57 & 0.35 & 0.97 & 0.74 & 0.56 \\ 
   \cdashline{2-12} & \multirow{3}{*}{\textbf{$\alpha=0.1$}} & \textbf{Ex. 1} & 0.79 & 0.35 & 0.16 & 0.87 & 0.52 & 0.28 & 0.91 & 0.53 & 0.36 \\ 
   &  & \textbf{Ex. 2} & 0.97 & 0.77 & 0.60 & 0.99 & 0.92 & 0.86 & 1.00 & 0.98 & 0.96 \\ 
   &  & \textbf{Ex. 3} & 0.88 & 0.54 & 0.33 & 0.95 & 0.69 & 0.55 & 0.98 & 0.84 & 0.72 \\ 
   \bottomrule 
\end{tabular}
}
\end{table}

\begin{table}[ht]
\centering
\caption{Change in Mean, $\hat{\mu}=\tilde{F}_n$, a=0.5, $\Delta=0.5$} 
\label{table2DimBootstrapWChEst_a0.5.txt_Delta0.5}
{\footnotesize
\begin{tabular}{lllrrr|rrr|rrr}
  \toprule 
 \multicolumn{3}{c}{} & \multicolumn{3}{c}{$n=30$} & \multicolumn{3}{c}{$n=40$} & \multicolumn{3}{c}{$n=50$}\\
 \cmidrule(l){4-6} \cmidrule(l){7-9} \cmidrule(l){10-12}
 & & & q=2 & q=6 & q=10 & q=2 & q=6 & q=10 & q=2 & q=6 & q=10\\
 \midrule 
 \midrule\multirow{6}{*}{\textbf{AR}} & \multirow{3}{*}{\textbf{$\alpha=0.05$}} & \textbf{Ex. 1} & 0.79 & 0.46 & 0.38 & 0.84 & 0.57 & 0.47 & 0.87 & 0.56 & 0.46 \\ 
   &  & \textbf{Ex. 2} & 0.97 & 0.82 & 0.74 & 0.98 & 0.93 & 0.89 & 1.00 & 0.99 & 0.97 \\ 
    &  & \textbf{Ex. 3} & 0.86 & 0.58 & 0.48 & 0.93 & 0.70 & 0.61 & 0.98 & 0.83 & 0.74 \\ 
   \cdashline{2-12} & \multirow{3}{*}{\textbf{$\alpha=0.1$}} & \textbf{Ex. 1} & 0.86 & 0.63 & 0.53 & 0.91 & 0.70 & 0.61 & 0.93 & 0.70 & 0.62 \\ 
   &  & \textbf{Ex. 2} & 0.99 & 0.91 & 0.86 & 1.00 & 0.96 & 0.94 & 1.00 & 0.99 & 0.99 \\ 
   &  & \textbf{Ex. 3} & 0.92 & 0.71 & 0.64 & 0.96 & 0.81 & 0.74 & 0.98 & 0.89 & 0.83 \\ 
  \midrule\multirow{6}{*}{\textbf{MA}} & \multirow{3}{*}{\textbf{$\alpha=0.05$}} & \textbf{Ex. 1} & 0.77 & 0.47 & 0.36 & 0.84 & 0.55 & 0.45 & 0.87 & 0.54 & 0.45 \\ 
   &  & \textbf{Ex. 2} & 0.97 & 0.82 & 0.74 & 0.98 & 0.93 & 0.89 & 1.00 & 0.98 & 0.97 \\ 
   &  & \textbf{Ex. 3} & 0.84 & 0.58 & 0.47 & 0.93 & 0.69 & 0.61 & 0.97 & 0.81 & 0.72 \\ 
   \cdashline{2-12} & \multirow{3}{*}{\textbf{$\alpha=0.1$}} & \textbf{Ex. 1} & 0.85 & 0.61 & 0.51 & 0.90 & 0.68 & 0.60 & 0.94 & 0.68 & 0.59 \\ 
   &  & \textbf{Ex. 2} & 0.98 & 0.90 & 0.85 & 1.00 & 0.96 & 0.94 & 1.00 & 0.99 & 0.99 \\ 
   &  & \textbf{Ex. 3} & 0.91 & 0.71 & 0.61 & 0.96 & 0.79 & 0.73 & 0.98 & 0.88 & 0.83 \\ 
   \bottomrule 
\end{tabular}
}
\end{table}

\begin{table}[ht]
\centering
\caption{Change in Mean, $\hat{\mu}=F_n$, a=0.2, $\Delta=1$} 
\label{table2DimBootstrap_a0.2.txt_Delta1}
{\footnotesize
\begin{tabular}{lllrrr|rrr|rrr}
  \toprule 
 \multicolumn{3}{c}{} & \multicolumn{3}{c}{$n=30$} & \multicolumn{3}{c}{$n=40$} & \multicolumn{3}{c}{$n=50$}\\
 \cmidrule(l){4-6} \cmidrule(l){7-9} \cmidrule(l){10-12}
 & & & q=2 & q=6 & q=10 & q=2 & q=6 & q=10 & q=2 & q=6 & q=10\\
 \midrule 
 \midrule\multirow{6}{*}{\textbf{AR}} & \multirow{3}{*}{\textbf{$\alpha=0.05$}} & \textbf{Ex. 1} & 0.90 & 0.04 & 0.00 & 1.00 & 0.32 & 0.01 & 1.00 & 0.81 & 0.02 \\ 
   &  & \textbf{Ex. 2} & 1.00 & 1.00 & 0.99 & 1.00 & 1.00 & 1.00 & 1.00 & 1.00 & 1.00 \\ 
    &  & \textbf{Ex. 3} & 1.00 & 1.00 & 0.78 & 1.00 & 1.00 & 1.00 & 1.00 & 1.00 & 1.00 \\ 
   \cdashline{2-12} & \multirow{3}{*}{\textbf{$\alpha=0.1$}} & \textbf{Ex. 1} & 0.97 & 0.39 & 0.03 & 1.00 & 0.87 & 0.23 & 1.00 & 1.00 & 0.54 \\ 
   &  & \textbf{Ex. 2} & 1.00 & 1.00 & 1.00 & 1.00 & 1.00 & 1.00 & 1.00 & 1.00 & 1.00 \\ 
   &  & \textbf{Ex. 3} & 1.00 & 1.00 & 1.00 & 1.00 & 1.00 & 1.00 & 1.00 & 1.00 & 1.00 \\ 
  \midrule\multirow{6}{*}{\textbf{MA}} & \multirow{3}{*}{\textbf{$\alpha=0.05$}} & \textbf{Ex. 1} & 0.92 & 0.20 & 0.01 & 1.00 & 0.80 & 0.10 & 1.00 & 1.00 & 0.39 \\ 
   &  & \textbf{Ex. 2} & 1.00 & 1.00 & 1.00 & 1.00 & 1.00 & 1.00 & 1.00 & 1.00 & 1.00 \\ 
   &  & \textbf{Ex. 3} & 1.00 & 1.00 & 0.99 & 1.00 & 1.00 & 1.00 & 1.00 & 1.00 & 1.00 \\ 
   \cdashline{2-12} & \multirow{3}{*}{\textbf{$\alpha=0.1$}} & \textbf{Ex. 1} & 0.97 & 0.58 & 0.13 & 1.00 & 0.98 & 0.59 & 1.00 & 1.00 & 0.97 \\ 
   &  & \textbf{Ex. 2} & 1.00 & 1.00 & 1.00 & 1.00 & 1.00 & 1.00 & 1.00 & 1.00 & 1.00 \\ 
   &  & \textbf{Ex. 3} & 1.00 & 1.00 & 1.00 & 1.00 & 1.00 & 1.00 & 1.00 & 1.00 & 1.00 \\ 
   \bottomrule 
\end{tabular}
}
\end{table}

\begin{table}[ht]
\centering
\caption{Change in Mean, $\hat{\mu}=\tilde{F}_n$, a=0.2, $\Delta=1$} 
\label{table2DimBootstrapWChEst_a0.2.txt_Delta1}
{\footnotesize
\begin{tabular}{lllrrr|rrr|rrr}
  \toprule 
 \multicolumn{3}{c}{} & \multicolumn{3}{c}{$n=30$} & \multicolumn{3}{c}{$n=40$} & \multicolumn{3}{c}{$n=50$}\\
 \cmidrule(l){4-6} \cmidrule(l){7-9} \cmidrule(l){10-12}
 & & & q=2 & q=6 & q=10 & q=2 & q=6 & q=10 & q=2 & q=6 & q=10\\
 \midrule 
 \midrule\multirow{6}{*}{\textbf{AR}} & \multirow{3}{*}{\textbf{$\alpha=0.05$}} & \textbf{Ex. 1} & 0.97 & 0.87 & 0.72 & 1.00 & 0.99 & 0.95 & 1.00 & 1.00 & 1.00 \\ 
   &  & \textbf{Ex. 2} & 1.00 & 1.00 & 1.00 & 1.00 & 1.00 & 1.00 & 1.00 & 1.00 & 1.00 \\ 
    &  & \textbf{Ex. 3} & 1.00 & 1.00 & 1.00 & 1.00 & 1.00 & 1.00 & 1.00 & 1.00 & 1.00 \\ 
   \cdashline{2-12} & \multirow{3}{*}{\textbf{$\alpha=0.1$}} & \textbf{Ex. 1} & 0.99 & 0.95 & 0.91 & 1.00 & 1.00 & 1.00 & 1.00 & 1.00 & 1.00 \\ 
   &  & \textbf{Ex. 2} & 1.00 & 1.00 & 1.00 & 1.00 & 1.00 & 1.00 & 1.00 & 1.00 & 1.00 \\ 
   &  & \textbf{Ex. 3} & 1.00 & 1.00 & 1.00 & 1.00 & 1.00 & 1.00 & 1.00 & 1.00 & 1.00 \\ 
  \midrule\multirow{6}{*}{\textbf{MA}} & \multirow{3}{*}{\textbf{$\alpha=0.05$}} & \textbf{Ex. 1} & 0.97 & 0.87 & 0.73 & 1.00 & 1.00 & 0.97 & 1.00 & 1.00 & 1.00 \\ 
   &  & \textbf{Ex. 2} & 1.00 & 1.00 & 1.00 & 1.00 & 1.00 & 1.00 & 1.00 & 1.00 & 1.00 \\ 
   &  & \textbf{Ex. 3} & 1.00 & 1.00 & 1.00 & 1.00 & 1.00 & 1.00 & 1.00 & 1.00 & 1.00 \\ 
   \cdashline{2-12} & \multirow{3}{*}{\textbf{$\alpha=0.1$}} & \textbf{Ex. 1} & 0.99 & 0.94 & 0.90 & 1.00 & 1.00 & 1.00 & 1.00 & 1.00 & 1.00 \\ 
   &  & \textbf{Ex. 2} & 1.00 & 1.00 & 1.00 & 1.00 & 1.00 & 1.00 & 1.00 & 1.00 & 1.00 \\ 
   &  & \textbf{Ex. 3} & 1.00 & 1.00 & 1.00 & 1.00 & 1.00 & 1.00 & 1.00 & 1.00 & 1.00 \\ 
   \bottomrule 
\end{tabular}
}
\end{table}

\begin{table}[ht]
\centering
\caption{Change in Mean, $\hat{\mu}=F_n$, a=0.5, $\Delta=1$} 
\label{table2DimBootstrap_a0.5.txt_Delta1}
{\footnotesize
\begin{tabular}{lllrrr|rrr|rrr}
  \toprule 
 \multicolumn{3}{c}{} & \multicolumn{3}{c}{$n=30$} & \multicolumn{3}{c}{$n=40$} & \multicolumn{3}{c}{$n=50$}\\
 \cmidrule(l){4-6} \cmidrule(l){7-9} \cmidrule(l){10-12}
 & & & q=2 & q=6 & q=10 & q=2 & q=6 & q=10 & q=2 & q=6 & q=10\\
 \midrule 
 \midrule\multirow{6}{*}{\textbf{AR}} & \multirow{3}{*}{\textbf{$\alpha=0.05$}} & \textbf{Ex. 1} & 0.86 & 0.15 & 0.00 & 0.97 & 0.39 & 0.04 & 0.99 & 0.59 & 0.10 \\ 
   &  & \textbf{Ex. 2} & 1.00 & 1.00 & 0.75 & 1.00 & 1.00 & 1.00 & 1.00 & 1.00 & 1.00 \\ 
    &  & \textbf{Ex. 3} & 1.00 & 0.76 & 0.26 & 1.00 & 0.98 & 0.86 & 1.00 & 1.00 & 1.00 \\ 
   \cdashline{2-12} & \multirow{3}{*}{\textbf{$\alpha=0.1$}} & \textbf{Ex. 1} & 0.93 & 0.43 & 0.11 & 0.99 & 0.66 & 0.30 & 1.00 & 0.83 & 0.46 \\ 
   &  & \textbf{Ex. 2} & 1.00 & 1.00 & 1.00 & 1.00 & 1.00 & 1.00 & 1.00 & 1.00 & 1.00 \\ 
   &  & \textbf{Ex. 3} & 1.00 & 0.92 & 0.75 & 1.00 & 1.00 & 0.98 & 1.00 & 1.00 & 1.00 \\ 
  \midrule\multirow{6}{*}{\textbf{MA}} & \multirow{3}{*}{\textbf{$\alpha=0.05$}} & \textbf{Ex. 1} & 0.86 & 0.30 & 0.04 & 0.97 & 0.59 & 0.16 & 0.99 & 0.77 & 0.33 \\ 
   &  & \textbf{Ex. 2} & 1.00 & 1.00 & 0.98 & 1.00 & 1.00 & 1.00 & 1.00 & 1.00 & 1.00 \\ 
   &  & \textbf{Ex. 3} & 1.00 & 0.88 & 0.59 & 1.00 & 0.99 & 0.95 & 1.00 & 1.00 & 1.00 \\ 
   \cdashline{2-12} & \multirow{3}{*}{\textbf{$\alpha=0.1$}} & \textbf{Ex. 1} & 0.93 & 0.53 & 0.23 & 0.99 & 0.75 & 0.48 & 0.99 & 0.90 & 0.65 \\ 
   &  & \textbf{Ex. 2} & 1.00 & 1.00 & 1.00 & 1.00 & 1.00 & 1.00 & 1.00 & 1.00 & 1.00 \\ 
   &  & \textbf{Ex. 3} & 1.00 & 0.95 & 0.85 & 1.00 & 1.00 & 0.99 & 1.00 & 1.00 & 1.00 \\ 
   \bottomrule 
\end{tabular}
}
\end{table}

\begin{table}[ht]
\centering
\caption{Change in Mean, $\hat{\mu}=\tilde{F}_n$, a=0.5, $\Delta=1$} 
\label{table2DimBootstrapWChEst_a0.5.txt_Delta1}
{\footnotesize
\begin{tabular}{lllrrr|rrr|rrr}
  \toprule 
 \multicolumn{3}{c}{} & \multicolumn{3}{c}{$n=30$} & \multicolumn{3}{c}{$n=40$} & \multicolumn{3}{c}{$n=50$}\\
 \cmidrule(l){4-6} \cmidrule(l){7-9} \cmidrule(l){10-12}
 & & & q=2 & q=6 & q=10 & q=2 & q=6 & q=10 & q=2 & q=6 & q=10\\
 \midrule 
 \midrule\multirow{6}{*}{\textbf{AR}} & \multirow{3}{*}{\textbf{$\alpha=0.05$}} & \textbf{Ex. 1} & 0.93 & 0.67 & 0.56 & 0.98 & 0.83 & 0.73 & 0.99 & 0.92 & 0.83 \\ 
   &  & \textbf{Ex. 2} & 1.00 & 1.00 & 1.00 & 1.00 & 1.00 & 1.00 & 1.00 & 1.00 & 1.00 \\ 
    &  & \textbf{Ex. 3} & 1.00 & 0.95 & 0.88 & 1.00 & 1.00 & 0.98 & 1.00 & 1.00 & 1.00 \\ 
   \cdashline{2-12} & \multirow{3}{*}{\textbf{$\alpha=0.1$}} & \textbf{Ex. 1} & 0.96 & 0.80 & 0.73 & 1.00 & 0.92 & 0.87 & 1.00 & 0.98 & 0.91 \\ 
   &  & \textbf{Ex. 2} & 1.00 & 1.00 & 1.00 & 1.00 & 1.00 & 1.00 & 1.00 & 1.00 & 1.00 \\ 
   &  & \textbf{Ex. 3} & 1.00 & 0.99 & 0.96 & 1.00 & 1.00 & 1.00 & 1.00 & 1.00 & 1.00 \\ 
  \midrule\multirow{6}{*}{\textbf{MA}} & \multirow{3}{*}{\textbf{$\alpha=0.05$}} & \textbf{Ex. 1} & 0.92 & 0.66 & 0.53 & 0.98 & 0.83 & 0.73 & 0.99 & 0.93 & 0.82 \\ 
   &  & \textbf{Ex. 2} & 1.00 & 1.00 & 1.00 & 1.00 & 1.00 & 1.00 & 1.00 & 1.00 & 1.00 \\ 
   &  & \textbf{Ex. 3} & 1.00 & 0.95 & 0.89 & 1.00 & 1.00 & 0.99 & 1.00 & 1.00 & 1.00 \\ 
   \cdashline{2-12} & \multirow{3}{*}{\textbf{$\alpha=0.1$}} & \textbf{Ex. 1} & 0.96 & 0.80 & 0.70 & 0.99 & 0.92 & 0.84 & 1.00 & 0.98 & 0.93 \\ 
   &  & \textbf{Ex. 2} & 1.00 & 1.00 & 1.00 & 1.00 & 1.00 & 1.00 & 1.00 & 1.00 & 1.00 \\ 
   &  & \textbf{Ex. 3} & 1.00 & 0.99 & 0.96 & 1.00 & 1.00 & 1.00 & 1.00 & 1.00 & 1.00 \\ 
   \bottomrule 
\end{tabular}
}
\end{table}

\begin{table}[ht]
\centering
\caption{Change in Skewness, $\hat{\mu}=F_n$, a=0.2} 
\label{table2DimBootstrapSkewness_a0.2.txt}
{\footnotesize
\begin{tabular}{lllrrr|rrr|rrr}
  \toprule 
 \multicolumn{3}{c}{} & \multicolumn{3}{c}{$n=30$} & \multicolumn{3}{c}{$n=40$} & \multicolumn{3}{c}{$n=50$}\\
 \cmidrule(l){4-6} \cmidrule(l){7-9} \cmidrule(l){10-12}
 & & & q=2 & q=6 & q=10 & q=2 & q=6 & q=10 & q=2 & q=6 & q=10\\
 \midrule 
 \midrule\multirow{6}{*}{\textbf{AR}} & \multirow{3}{*}{\textbf{$\alpha=0.05$}} & \textbf{Ex. 1} & 0.28 & 0.01 & 0.00 & 0.80 & 0.15 & 0.00 & 0.99 & 0.56 & 0.02 \\ 
   &  & \textbf{Ex. 2} & 1.00 & 1.00 & 0.99 & 1.00 & 1.00 & 1.00 & 1.00 & 1.00 & 1.00 \\ 
    &  & \textbf{Ex. 3} & 1.00 & 0.91 & 0.37 & 1.00 & 1.00 & 1.00 & 1.00 & 1.00 & 1.00 \\ 
   \cdashline{2-12} & \multirow{3}{*}{\textbf{$\alpha=0.1$}} & \textbf{Ex. 1} & 0.49 & 0.15 & 0.02 & 0.93 & 0.55 & 0.11 & 1.00 & 0.94 & 0.36 \\ 
   &  & \textbf{Ex. 2} & 1.00 & 1.00 & 1.00 & 1.00 & 1.00 & 1.00 & 1.00 & 1.00 & 1.00 \\ 
   &  & \textbf{Ex. 3} & 1.00 & 0.99 & 0.92 & 1.00 & 1.00 & 1.00 & 1.00 & 1.00 & 1.00 \\ 
  \midrule\multirow{6}{*}{\textbf{MA}} & \multirow{3}{*}{\textbf{$\alpha=0.05$}} & \textbf{Ex. 1} & 0.31 & 0.07 & 0.00 & 0.82 & 0.41 & 0.04 & 0.99 & 0.89 & 0.23 \\ 
   &  & \textbf{Ex. 2} & 1.00 & 1.00 & 1.00 & 1.00 & 1.00 & 1.00 & 1.00 & 1.00 & 1.00 \\ 
   &  & \textbf{Ex. 3} & 1.00 & 0.98 & 0.83 & 1.00 & 1.00 & 1.00 & 1.00 & 1.00 & 1.00 \\ 
   \cdashline{2-12} & \multirow{3}{*}{\textbf{$\alpha=0.1$}} & \textbf{Ex. 1} & 0.51 & 0.23 & 0.07 & 0.93 & 0.73 & 0.31 & 1.00 & 0.98 & 0.80 \\ 
   &  & \textbf{Ex. 2} & 1.00 & 1.00 & 1.00 & 1.00 & 1.00 & 1.00 & 1.00 & 1.00 & 1.00 \\ 
   &  & \textbf{Ex. 3} & 1.00 & 1.00 & 0.98 & 1.00 & 1.00 & 1.00 & 1.00 & 1.00 & 1.00 \\ 
   \bottomrule 
\end{tabular}
}
\end{table}

\begin{table}[ht]
\centering
\caption{Change in Skewness, $\hat{\mu}=\tilde{F}_n$, a=0.2} 
\label{table2DimBootstrapSkewnessChEst_a0.2.txt}
{\footnotesize
\begin{tabular}{lllrrr|rrr|rrr}
  \toprule 
 \multicolumn{3}{c}{} & \multicolumn{3}{c}{$n=30$} & \multicolumn{3}{c}{$n=40$} & \multicolumn{3}{c}{$n=50$}\\
 \cmidrule(l){4-6} \cmidrule(l){7-9} \cmidrule(l){10-12}
 & & & q=2 & q=6 & q=10 & q=2 & q=6 & q=10 & q=2 & q=6 & q=10\\
 \midrule 
 \midrule\multirow{6}{*}{\textbf{AR}} & \multirow{3}{*}{\textbf{$\alpha=0.05$}} & \textbf{Ex. 1} & 0.45 & 0.35 & 0.28 & 0.88 & 0.80 & 0.69 & 0.99 & 0.98 & 0.93 \\ 
   &  & \textbf{Ex. 2} & 1.00 & 1.00 & 1.00 & 1.00 & 1.00 & 1.00 & 1.00 & 1.00 & 1.00 \\ 
    &  & \textbf{Ex. 3} & 1.00 & 0.99 & 0.97 & 1.00 & 1.00 & 1.00 & 1.00 & 1.00 & 1.00 \\ 
   \cdashline{2-12} & \multirow{3}{*}{\textbf{$\alpha=0.1$}} & \textbf{Ex. 1} & 0.62 & 0.54 & 0.50 & 0.95 & 0.91 & 0.87 & 1.00 & 0.99 & 0.99 \\ 
   &  & \textbf{Ex. 2} & 1.00 & 1.00 & 1.00 & 1.00 & 1.00 & 1.00 & 1.00 & 1.00 & 1.00 \\ 
   &  & \textbf{Ex. 3} & 1.00 & 1.00 & 1.00 & 1.00 & 1.00 & 1.00 & 1.00 & 1.00 & 1.00 \\ 
  \midrule\multirow{6}{*}{\textbf{MA}} & \multirow{3}{*}{\textbf{$\alpha=0.05$}} & \textbf{Ex. 1} & 0.43 & 0.36 & 0.28 & 0.88 & 0.82 & 0.74 & 0.99 & 0.98 & 0.96 \\ 
   &  & \textbf{Ex. 2} & 1.00 & 1.00 & 1.00 & 1.00 & 1.00 & 1.00 & 1.00 & 1.00 & 1.00 \\ 
   &  & \textbf{Ex. 3} & 1.00 & 1.00 & 0.98 & 1.00 & 1.00 & 1.00 & 1.00 & 1.00 & 1.00 \\ 
   \cdashline{2-12} & \multirow{3}{*}{\textbf{$\alpha=0.1$}} & \textbf{Ex. 1} & 0.60 & 0.54 & 0.47 & 0.95 & 0.92 & 0.88 & 1.00 & 0.99 & 0.99 \\ 
   &  & \textbf{Ex. 2} & 1.00 & 1.00 & 1.00 & 1.00 & 1.00 & 1.00 & 1.00 & 1.00 & 1.00 \\ 
   &  & \textbf{Ex. 3} & 1.00 & 1.00 & 1.00 & 1.00 & 1.00 & 1.00 & 1.00 & 1.00 & 1.00 \\ 
   \bottomrule 
\end{tabular}
}
\end{table}

\begin{table}[ht]
\centering
\caption{Change in Skewness, $\hat{\mu}=F_n$, a=0.5} 
\label{table2DimBootstrapSkewness_a0.5.txt}
{\footnotesize
\begin{tabular}{lllrrr|rrr|rrr}
  \toprule 
 \multicolumn{3}{c}{} & \multicolumn{3}{c}{$n=30$} & \multicolumn{3}{c}{$n=40$} & \multicolumn{3}{c}{$n=50$}\\
 \cmidrule(l){4-6} \cmidrule(l){7-9} \cmidrule(l){10-12}
 & & & q=2 & q=6 & q=10 & q=2 & q=6 & q=10 & q=2 & q=6 & q=10\\
 \midrule 
 \midrule\multirow{6}{*}{\textbf{AR}} & \multirow{3}{*}{\textbf{$\alpha=0.05$}} & \textbf{Ex. 1} & 0.38 & 0.03 & 0.00 & 0.68 & 0.15 & 0.00 & 0.85 & 0.32 & 0.05 \\ 
   &  & \textbf{Ex. 2} & 1.00 & 1.00 & 0.82 & 1.00 & 1.00 & 1.00 & 1.00 & 1.00 & 1.00 \\ 
    &  & \textbf{Ex. 3} & 0.89 & 0.50 & 0.11 & 1.00 & 0.96 & 0.82 & 1.00 & 1.00 & 1.00 \\ 
   \cdashline{2-12} & \multirow{3}{*}{\textbf{$\alpha=0.1$}} & \textbf{Ex. 1} & 0.53 & 0.16 & 0.04 & 0.82 & 0.38 & 0.13 & 0.95 & 0.64 & 0.27 \\ 
   &  & \textbf{Ex. 2} & 1.00 & 1.00 & 0.99 & 1.00 & 1.00 & 1.00 & 1.00 & 1.00 & 1.00 \\ 
   &  & \textbf{Ex. 3} & 0.96 & 0.78 & 0.55 & 1.00 & 1.00 & 0.97 & 1.00 & 1.00 & 1.00 \\ 
  \midrule\multirow{6}{*}{\textbf{MA}} & \multirow{3}{*}{\textbf{$\alpha=0.05$}} & \textbf{Ex. 1} & 0.36 & 0.09 & 0.01 & 0.67 & 0.26 & 0.06 & 0.83 & 0.52 & 0.18 \\ 
   &  & \textbf{Ex. 2} & 1.00 & 1.00 & 0.98 & 1.00 & 1.00 & 1.00 & 1.00 & 1.00 & 1.00 \\ 
   &  & \textbf{Ex. 3} & 0.88 & 0.63 & 0.37 & 1.00 & 0.98 & 0.93 & 1.00 & 1.00 & 1.00 \\ 
   \cdashline{2-12} & \multirow{3}{*}{\textbf{$\alpha=0.1$}} & \textbf{Ex. 1} & 0.51 & 0.22 & 0.07 & 0.81 & 0.49 & 0.22 & 0.93 & 0.72 & 0.44 \\ 
   &  & \textbf{Ex. 2} & 1.00 & 1.00 & 1.00 & 1.00 & 1.00 & 1.00 & 1.00 & 1.00 & 1.00 \\ 
   &  & \textbf{Ex. 3} & 0.96 & 0.83 & 0.67 & 1.00 & 1.00 & 0.99 & 1.00 & 1.00 & 1.00 \\ 
   \bottomrule 
\end{tabular}
}
\end{table}

\begin{table}[ht]
\centering
\caption{Change in Skewness, $\hat{\mu}=\tilde{F}_n$, a=0.5} 
\label{table2DimBootstrapSkewnessChEst_a0.5.txt}
{\footnotesize
\begin{tabular}{lllrrr|rrr|rrr}
  \toprule 
 \multicolumn{3}{c}{} & \multicolumn{3}{c}{$n=30$} & \multicolumn{3}{c}{$n=40$} & \multicolumn{3}{c}{$n=50$}\\
 \cmidrule(l){4-6} \cmidrule(l){7-9} \cmidrule(l){10-12}
 & & & q=2 & q=6 & q=10 & q=2 & q=6 & q=10 & q=2 & q=6 & q=10\\
 \midrule 
 \midrule\multirow{6}{*}{\textbf{AR}} & \multirow{3}{*}{\textbf{$\alpha=0.05$}} & \textbf{Ex. 1} & 0.50 & 0.30 & 0.23 & 0.76 & 0.54 & 0.41 & 0.89 & 0.72 & 0.64 \\ 
   &  & \textbf{Ex. 2} & 1.00 & 1.00 & 0.99 & 1.00 & 1.00 & 1.00 & 1.00 & 1.00 & 1.00 \\ 
    &  & \textbf{Ex. 3} & 0.92 & 0.78 & 0.67 & 1.00 & 0.99 & 0.96 & 1.00 & 1.00 & 1.00 \\ 
   \cdashline{2-12} & \multirow{3}{*}{\textbf{$\alpha=0.1$}} & \textbf{Ex. 1} & 0.65 & 0.46 & 0.39 & 0.87 & 0.73 & 0.65 & 0.96 & 0.84 & 0.79 \\ 
   &  & \textbf{Ex. 2} & 1.00 & 1.00 & 1.00 & 1.00 & 1.00 & 1.00 & 1.00 & 1.00 & 1.00 \\ 
   &  & \textbf{Ex. 3} & 0.97 & 0.89 & 0.83 & 1.00 & 1.00 & 1.00 & 1.00 & 1.00 & 1.00 \\ 
  \midrule\multirow{6}{*}{\textbf{MA}} & \multirow{3}{*}{\textbf{$\alpha=0.05$}} & \textbf{Ex. 1} & 0.45 & 0.30 & 0.24 & 0.73 & 0.54 & 0.44 & 0.87 & 0.72 & 0.64 \\ 
   &  & \textbf{Ex. 2} & 1.00 & 1.00 & 1.00 & 1.00 & 1.00 & 1.00 & 1.00 & 1.00 & 1.00 \\ 
   &  & \textbf{Ex. 3} & 0.92 & 0.79 & 0.70 & 1.00 & 1.00 & 0.97 & 1.00 & 1.00 & 1.00 \\ 
   \cdashline{2-12} & \multirow{3}{*}{\textbf{$\alpha=0.1$}} & \textbf{Ex. 1} & 0.61 & 0.45 & 0.36 & 0.85 & 0.70 & 0.65 & 0.95 & 0.84 & 0.77 \\ 
   &  & \textbf{Ex. 2} & 1.00 & 1.00 & 1.00 & 1.00 & 1.00 & 1.00 & 1.00 & 1.00 & 1.00 \\ 
   &  & \textbf{Ex. 3} & 0.96 & 0.88 & 0.84 & 1.00 & 1.00 & 1.00 & 1.00 & 1.00 & 1.00 \\ 
   \bottomrule 
\end{tabular}
}
\end{table}
\begin{subsection}{Conclusion}
In conclusion, the simulations show that the proposed tests display the typical over-rejection property of bootstrap tests but have good empirical power against changes in the distribution. The latter is strongly influenced by the size of the set on which there is a change. While the two considered bootstrap procedures (\textbf{MA} and \textbf{AR}) show comparable results, the choice of the bandwidth has a significant effect, with smaller bandwidths leading to higher rejection rates. 
In comparison to $\hat{\mu}=F_n$, the estimator $\hat{\mu}=\tilde{F}_n$ has worse adherence to the nominal level under the null hypothesis but also better power against changes in mean or in the skewness. This might be due to the fact that $\tilde{F}_n$ is a more accurate estimator for the mean under the alternative but performs slightly worse under the null hypothesis.
\end{subsection}
\end{section}

\FloatBarrier

\begin{section}{Proofs}

\begin{subsection}{Preliminary results}
\begin{lemma}\label{le:RosZhang}
Let $\{X_\mbf{k}\}_{\mbf{k}\in\N^d}$ be an $H$-valued centered random field with $\lim\limits_{\tau\to\infty}\rho_{\R}(\tau)<1$. Then for any $r\geq 2$, there exists a positive constant $B_{d,r}$ depending only on $r$, $d$ and $\rho_\R(\cdot)$ such that for any finite set $S\subset \N^d$,
\begin{equation}
\mathrm{E}\left\|\sum_{\mbf{k}\in S} X_\mbf{k}\right\|^r \leq B_{d,r} \left\{ \sum_{\mbf{k}\in S}\mathrm{E}\|X_\mbf{k}\|^r + \left( \sum_{\mbf{k}\in S}\mathrm{E}\|X_\mbf{k}\|^2\right)^{r/2}\right\}.
\label{eq:rosenthalIneq}
\end{equation}
If $\sup\limits_{\mbf{k}\in\N^d}\mathrm{E}\|X_{\mbf{k}}\|^{r}<\infty$, this implies
\begin{equation}
\mathrm{E}\left\|\sum_{\mbf{k}\in S} X_\mbf{k}\right\|^r \leq B_{d,r} \left\{\sup\limits_{\mbf{k}\in\N^d}\mathrm{E}\|X_{\mbf{k}}\|^{r} + (\sup\limits_{\mbf{k}\in\N^d}\mathrm{E}\|X_{\mbf{k}}\|^2)^{r/2}\right\} (\#S)^{r/2} =: C(d,r,X) (\#S)^{r/2}.
\label{eq:momIneq}
\end{equation}
If \eqref{eq:momIneq} holds for $r>2$ and every block $S$ in $\Z^d$, $U$ is any block in $\Z^d$, and
\begin{equation}
M(U)=\max\limits_{W\triangleleft U}\left\|\sum\limits_{\mbf{j}\in W}X_\mbf{j}\right\|,
\label{eq:defMax}
\end{equation}
then 
\begin{equation}
\mathrm{E}\left[M(U)^r\right] \leq \tilde{C} C(d,r,X) (\#U)^{r/2}
\label{eq:maxIneq}
\end{equation}
with $\tilde{C}=\left(\frac{5}{2}\right)^{d} (1-2^{(1-\frac{r}{2})/r})^{-dr}$.
\end{lemma}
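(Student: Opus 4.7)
The bound \eqref{eq:rosenthalIneq} is the substantive content; \eqref{eq:momIneq} is an immediate consequence, and \eqref{eq:maxIneq} is a $d$-dimensional maximal inequality of Móricz type.

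For \eqref{eq:rosenthalIneq}, my plan is to reduce to the real-valued case and invoke a Rosenthal-type estimate already available for $\rho$-mixing random fields (e.g., the field analogue of Shao's inequality or Zhang's inequality, hence the name of the lemma). The Hilbert-to-real reduction exploits that $H$ is separable: fixing a CONS $\{e_\ell\}_\ell$, one has $\|\sum_{\mbf{k}\in S}X_\mbf{k}\|^{2}=\sum_\ell\bigl(\sum_{\mbf{k}\in S}\sprod{X_\mbf{k},e_\ell}\bigr)^{2}$. Rather than applying a one-dimensional Rosenthal inequality coordinate-by-coordinate (which would lose geometry for $r>2$), I would run the standard blocking argument of the scalar case with absolute values replaced by $\|\cdot\|$: split $S$ dyadically along each of the $d$ coordinates into $2^{d}$ parity sub-families, iterate until the resulting sub-blocks are widely separated, and use the assumption $\lim_{\tau\to\infty}\rho_\R(\tau)<1$ to pass from the dependent sub-block sums to a situation where a Hilbert-valued Rosenthal inequality for independent centered summands (Burkholder--Rosenthal in Hilbert space) applies. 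Summing the geometric error terms generated by the mixing coefficient produces a constant depending only on $d$, $r$, and $\rho_\R(\cdot)$.

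The bound \eqref{eq:momIneq} is immediate from \eqref{eq:rosenthalIneq}: estimate each $\mathrm{E}\|X_\mbf{k}\|^{r}$ and each $\mathrm{E}\|X_\mbf{k}\|^{2}$ by their suprema over $\mbf{k}\in\N^d$, and note that $\#S\leq(\#S)^{r/2}$ for $r\geq 2$ (and $\#S\geq 1$). For the maximal inequality \eqref{eq:maxIneq}, my plan is to run the Móricz \cite{moricz1983} chaining argument in the $H$-valued setting. That argument uses only (i) the triangle inequality for the norm, (ii) a moment bound of the form $\mathrm{E}\|\sum_{\mbf{j}\in W}X_\mbf{j}\|^{r}\leq g(\#W)$ with $g(n)=C(d,r,X)n^{r/2}$ super-additive in the sense $g(n_1)+g(n_2)\leq g(n_1+n_2)$ (which holds because $r/2\geq 1$), and (iii) a dyadic decomposition of each $W\triangleleft U$ along the $d$ coordinates. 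Writing $W$ as a telescope of dyadic sub-blocks that share the minimal corner of $U$, applying the triangle inequality and $(\sum a_i)^{r}\leq (\sum a_i^{r}c_i^{-1})(\sum c_i^{1/(r-1)})^{r-1}$-style summation, one iterates over $d$ coordinates. At each coordinate a geometric series in $2^{-(r/2-1)/r}$ appears, yielding a factor $(1-2^{(1-r/2)/r})^{-r}$ and a combinatorial constant $5/2$; taking the product over the $d$ coordinates produces the advertised $\tilde C=(5/2)^{d}(1-2^{(1-r/2)/r})^{-dr}$ together with the factor $(\#U)^{r/2}$ inherited from $g$.

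The main obstacle is \eqref{eq:rosenthalIneq}. Shao's $\rho$-mixing Rosenthal inequality and its field analogue are established through delicate blocking, and lifting the estimate to Hilbert-valued summands needs care because $\|\cdot\|^{r}$ for $r>2$ does not factor coordinate-wise. If running the blocking proof directly in $H$ proves too cumbersome, I would alternatively perform the finite-rank reduction $X_\mbf{k}\mapsto P_m X_\mbf{k}$, prove \eqref{eq:rosenthalIneq} with a constant uniform in $m$ (treating $P_m X_\mbf{k}$ as an $\R^{m}$-valued $\rho_\R$-mixing field and applying a vector-valued Rosenthal inequality), and then let $m\to\infty$ using $\|P_m \sum_\mbf{k} X_\mbf{k}\|\nearrow\|\sum_\mbf{k} X_\mbf{k}\|$ together with monotone convergence on the left-hand side and $\mathrm{E}\|P_m X_\mbf{k}\|^{s}\leq\mathrm{E}\|X_\mbf{k}\|^{s}$ for $s\in\{2,r\}$ on the right-hand side.
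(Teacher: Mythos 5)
Your treatment of \eqref{eq:momIneq} and \eqref{eq:maxIneq} is fine and matches the paper: \eqref{eq:momIneq} is indeed trivial, and for \eqref{eq:maxIneq} the paper simply cites Corollary 1 of M\'oricz \cite{moricz1983}, observing (as you do) that the chaining argument uses only the triangle inequality and the superadditive moment bound, so it transfers verbatim to any normed space. The problem is your plan for the core inequality \eqref{eq:rosenthalIneq}. The paper does not redo any blocking at all: it argues by induction over the dimension $d$. The base case $d=1$ is exactly Theorem 2 of Zhang (a Rosenthal inequality for $\rho^*$-mixing Hilbert-valued sequences, applicable because for $d=1$ the coefficients $\rho_\R$ and $\rho_\R^*$ coincide and, by Bradley's identity, $\rho_H=\rho_\R$). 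For the induction step one slices $S$ along the first coordinate, $S(j)=\{\mbf{k}\in S:\,k_1=j\}$, sets $Y_j=\sum_{\mbf{k}\in S(j)}X_\mbf{k}$, notes that $\{Y_j\}_j$ is a one-parameter $H$-valued field with $\rho_{\R,Y}\leq\rho_{\R,X}$ while each slice is a $(d-1)$-parameter field, and applies the $d=1$ case to $\{Y_j\}$ followed by the induction hypothesis to each slice. This is precisely the structure that the definition of $\rho_\R$ (separation in a single coordinate) is designed to feed, and it makes the proof a few lines long.

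Your proposed route has a genuine gap at its central step: after the dyadic parity blocking you claim to "pass from the dependent sub-block sums to a situation where a Hilbert-valued Rosenthal inequality for \emph{independent} centered summands applies." For $\rho$-mixing there is no coupling that replaces widely separated blocks by independent ones (that device is available for $\beta$- or $\varphi$-mixing); the Shao--Zhang-type proofs instead run a recursive moment/variance estimate in which the smallness of $\rho_\R(\tau)$ enters through covariance bounds, and that recursion is exactly the delicate part you are deferring. Your fallback via the finite-rank projections $P_m$ does not escape this either: you would need a Rosenthal inequality for $\rho_\R$-mixing $\R^m$-valued fields with constants uniform in $m$, which is not an off-the-shelf result and is essentially equivalent to the Hilbert-valued statement you are trying to prove. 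The fix is to abandon the direct blocking and use the dimension-induction reduction to Zhang's one-dimensional Hilbert-valued theorem.
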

\begin{remark}
For $H$-valued processes, an alternative definition of $\rho$-mixing is given by the coefficients
\begin{equation*}
\resizebox{\hsize}{!}{$\begin{aligned}\rho_{H}(\mathcal{A},\mathcal{B})&=\sup\left\{\frac{|\mathrm{E}[\sprod{X,Y}] - \sprod{\mathrm{E}X,\mathrm{E}Y}|}{\|X\|_2 \|Y\|_2}:\; X\in L^2(\mathcal{A},H), Y \in L^2(\mathcal{B},H),\|X\|_2,\|Y\|_2>0 \right\}\\
&= \sup\left\{\frac{|\mathrm{E}[\sprod{X,Y}]|}{\|X\|_2 \|Y\|_2}:\; X\in L^2(\mathcal{A},H), Y \in L^2(\mathcal{B},H),\|X\|_2,\|Y\|_2>0, \mathrm{E}X=\mathrm{E}Y=0 \right\}.\end{aligned}$}
\end{equation*}
(Note that the above equality is a consequence of the well known inequality $\|X\|_2\geq \|X-\mathrm{E}X\|_2$.) Analogously to the real-valued case, one can then define $\rho_{H}(r)$ and $\rho_{H}^*(r)$ for random fields. As shown in \cite{bradley1985}, Theorem 4.2, the coefficients $\rho_{H}$ and $\rho_\R$ coincide and therefore $\rho_{H}(\cdot)=\rho_\R(\cdot)$ and $\rho_{H}^*(\cdot)=\rho_{\R}^*(\cdot)$. 
\end{remark}
\begin{proof}
We prove \eqref{eq:rosenthalIneq} by induction over $d$. The induction start follows from Theorem 2 in \cite{zhang1998}. For the application of the theorem, note that for $d=1$, the definitions of $\rho_{\R}$- and $\rho_{\R}^*$-mixing coincide, and that for $\rho_{\R}^*=\rho_{H}^*$ instead of $\rho_{\R}=\rho_H$, \eqref{eq:rosenthalIneq} is Theorem 2 in \cite{zhang1998}. Let $d\geq 2$ and assume that \eqref{eq:rosenthalIneq} holds for any dimension smaller than $d$. We use an analogous argumentation to \cite{bradley2007}, Volume III, p.234, to prove that \eqref{eq:rosenthalIneq} holds for $d$ itself.
For any nonempty finite set $S\subseteq\N^d$ and $j\in\N$, define sets $S(j)=\{\mbf{k}\in S:\, k_1=j\}$, $T(j)=\{\mbf{k}\in\N^d:\, k_1=j\}$ and $N(S)=\{j\in\N:\, S(j)\neq \emptyset\}$. Set $Y_j=\sum\limits_{\mbf{k}\in S(j)}X_\mbf{k}$ if $j\in N(S)$ and $Y_j=0$ otherwise. Then $Y_j$ is a measurable transform of $\{X_\mbf{k}\}_{\mbf{k}\in T(j)}$, and therefore, $\{Y_j\}_{j\in\N}$ satisfies $\rho_{\R,Y}(\cdot)\leq\rho_{\R,X}(\cdot)$ and thus $\lim\limits_{\tau\to\infty}\rho_{\R,Y}(\tau)<1$. Analogously, since $T(j)\cong \N^{d-1}$, the random field $\zeta^{(j)}=\{X_\mbf{k}:\, \mbf{k}\in T(j)\}$ can be viewed as a $(d-1)$-parameter field with $\rho_{\R,\zeta^{(j)}}(\cdot)\leq \rho_{\R,X}(\cdot)$ (and thus $\lim\limits_{\tau\to\infty}\rho_{\R,\zeta^{(j)}}(\tau)<1$). It holds that $\sum_{\mbf{k}\in S}X_\mbf{k}=\sum_{j\in N(S)}Y_j$ and $Y_j=\sum_{\mbf{k}\in S(j)}\zeta^{(j)}_\mbf{k}$. Now, applying the induction hypothesis first to $\{Y_j\}_{j\in\N}$ and then to $\zeta^{(j)}=\{X_\mbf{k}\}_{\mbf{k}\in T(j)}$, we obtain 
\begingroup
\allowdisplaybreaks
\begin{align*}
\mathrm{E}\left\|\sum_{\mbf{k}\in S}X_\mbf{k}\right\|^r &= \mathrm{E}\left\|\sum_{j\in N(S)}Y_j\right\|^r \\
&\leq B_{1,r}\left\{\sum_{j\in N(S)}\mathrm{E}\|Y_j\|^r + \left(\sum_{j\in N(S)}\mathrm{E}\|Y_j\|^2\right)^{r/2}\right\} \\
&= B_{1,r} \sum_{j\in N(S)}\mathrm{E}\left\|\sum_{\mbf{k}\in S(j)}\zeta^{(j)}_\mbf{k}\right\|^r + B_{1,r} \left(\sum_{j\in N(S)}\mathrm{E}\left\|\sum_{\mbf{k}\in S(j)}\zeta^{(j)}_\mbf{k}\right\|^2\right)^{r/2} \\
&\leq B_{1,r} \sum_{j\in N(S)}B_{d-1,r}\left\{\sum_{\mbf{k}\in S(j)}\mathrm{E}\left\|\zeta^{(j)}_\mbf{k}\right\|^r + \left(\sum_{\mbf{k}\in S(j)}\mathrm{E}\left\|\zeta^{(j)}_\mbf{k}\right\|^2 \right)^{r/2}\right\} \\
&+ B_{1,r} \left[\sum_{j\in N(S)} B_{d-1,2} \left\{\sum_{\mbf{k}\in S(j)}\mathrm{E}\left\|\zeta^{(j)}_\mbf{k}\right\|^2 + \left(\sum_{\mbf{k}\in S(j)}\mathrm{E}\left\|\zeta^{(j)}_\mbf{k}\right\|^2\right)^{2/2}\right\}\right]^{r/2} \\
&= B_{1,r}B_{d-1,r}\left\{\sum_{\mbf{k}\in S}\mathrm{E}\|X_\mbf{k}\|^r + \sum_{j\in N(S)}\left(\sum_{\mbf{k}\in S(j)}\mathrm{E}\|X_\mbf{k}\|^2\right)^{r/2} \right\}\\
&+ 2^{r/2}B_{1,r} B_{d-1,2}^{r/2}\left(\sum_{\mbf{k}\in S}\mathrm{E}\|X_\mbf{k}\|^2\right)^{r/2} \\
&\leq (B_{1,r}B_{d-1,r} + 2^{r/2} B_{d-1,2}^{r/2}B_{1,r})\left\{ \sum_{\mbf{k}\in S}\mathrm{E}\|X_\mbf{k}\|^r+\left(\sum_{\mbf{k}\in S}\mathrm{E}\|X_\mbf{k}\|^2\right)^{r/2}\right\},
\end{align*}%
\endgroup
where the well-known inequality $\left(\sum_{k=1}^m a_k\right)^q \geq \sum_{k=1}^m a_k^q$ 
is used for the last inequality.

\eqref{eq:momIneq} is a trivial consequence of \eqref{eq:rosenthalIneq}. If \eqref{eq:momIneq} holds for some $r>2$ and every block in $\Z^d$, \eqref{eq:maxIneq} follows from Corollary 1 in \cite{moricz1983} (see also \cite{bulinskiShashkin}, Chapter 2, Theorem 1.2, p. 108). (The Corollary can be applied in any normed space without changing the proof.)
\end{proof}

Following an approach that is similar to \cite{davidson2002} (see Theorems 29.6 and 29.18), we aim to reduce the multivariate FCLT to the corresponding results for the univariate case. For real-valued processes, Deo \cite{deoFCLT1975} gave a version for random fields of Theorems 19.1 and 19.2 of \cite{billingsley1968}, which use a characterization of Brownian motion to obtain a general FCLT (see Lemmas 2 and 3 in \cite{deoFCLT1975}). We extend this result to multivariate random fields by taking advantage of the fact that Gaussian random vectors can be characterized by their behavior under projections.
\begin{lemma}\label{le:FCLTRk}
Let $\Sigma$ be a symmetric positive semidefinite matrix and $S_n=\{S_n(\mbf{t})\}_{\mbf{t}\in[0,1]^d}$ a sequence of stochastic processes with sample paths in $D_{\R^k}([0,1]^d)$, such that 
\begin{enumerate}
	\item[(i)] $\mathrm{E}S_n(\mbf{t})\to \undnr{0}$ and $\Cov S_n(\mbf{t}) \to [\mbf{t}] \Sigma$ as $n\to\infty$, for each $\mbf{t}\in[0,1]^d$, 
	\item[(ii)] the set $\{\|S_n(\mbf{t})\|^2\}_{n}$ is uniformly integrable for each $\mbf{t}$,
	\item[(iii)] if $B_1,\ldots,B_p$ is a collection of strongly separated blocks, then the increments $S_n(B_1)$,\ldots, $S_n(B_p)$ are asymptotically independent in the sense that if $H_1$,\ldots,$H_p$ are arbitrary Borel sets in $\R^k$, then the difference
	\begin{align*}
	&\Pr(S_n(B_1)\in H_1,\ldots,S_n(B_p)\in H_p) - \prod_{i=1}^p \Pr(S_n(B_i)\in H_i)
	\end{align*}
	goes to zero as $n\to\infty$ and,
	\item[(iv)] for each $\varepsilon>0$, $\eta>0$, we can find a $\delta>0$ such that 
	\[\Pr(w^k(S_n,\delta)>\varepsilon)<\eta\]
	for all sufficiently large $n$, where we define the modulus of continuity
\[w^k(x;\delta):=\sup\{\|x(\mbf{t})-x(\mbf{s})\|: \|\mbf{t}-\mbf{s}\|\leq \delta\}, \]
for $x\in D_{\R^k}([0,1]^d)$ and $0<\delta<1$.  
\end{enumerate}
Then $S_n$ converges weakly in $D_{\R^k}([0,1]^d)$ to the $k$-dimensional Brownian sheet on $[0,1]^d$ with covariance matrix $\Sigma$.
\end{lemma}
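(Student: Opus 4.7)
The plan is to reduce the multivariate statement to Deo's real-valued FCLT (Lemmas 2 and 3 of \cite{deoFCLT1975}) via the Cramér-Wold device, exploiting the fact that a Gaussian measure on $\R^k$ (and more generally the law of a $k$-dimensional Brownian sheet) is determined by the laws of its one-dimensional projections.

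First, I would establish that $\{S_n\}$ is tight in $D_{\R^k}([0,1]^d)$. Condition (iv) is the standard modulus-of-continuity criterion for tightness in $D$-type spaces, and together with tightness of the marginal distributions at finitely many points (which follows from (ii), since uniform integrability of $\|S_n(\mbf{t})\|^2$ implies tightness of $S_n(\mbf{t})$ in $\R^k$) it yields relative compactness. Since $\R^k$ is finite-dimensional, the argument is essentially the one-dimensional proof of Neuhaus \cite{neuhaus1969,neuhaus1971} with $|\cdot|$ replaced by the Euclidean norm, and no genuinely new topological issue appears.

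Next, I would identify every subsequential limit. Fix an arbitrary $\lambda\in\R^k$ and consider the real-valued process $\lambda^\top S_n$. The mean and variance conditions at each $\mbf{t}$ follow by linearity from (i), with limiting variance $\sigma_\lambda^2 := \lambda^\top \Sigma \lambda$; uniform integrability of $\{(\lambda^\top S_n(\mbf{t}))^2\}$ follows from (ii) since $(\lambda^\top S_n(\mbf{t}))^2 \leq \|\lambda\|^2 \|S_n(\mbf{t})\|^2$; asymptotic independence of the scalar increments $\lambda^\top S_n(B_1),\ldots,\lambda^\top S_n(B_p)$ over strongly separated blocks follows from (iii) applied to the preimages $H_i' = \{x\in\R^k : \lambda^\top x \in H_i\}$ of arbitrary real Borel sets $H_i$; and the scalar modulus-of-continuity bound follows from (iv) via $w(\lambda^\top S_n,\delta)\leq \|\lambda\|\,w^k(S_n,\delta)$. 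Deo's Lemmas then give $\lambda^\top S_n \Rightarrow B_{\sigma_\lambda^2}$ in $D_\R([0,1]^d)$, where $B_{\sigma_\lambda^2}$ is a real-valued Brownian sheet with variance parameter $\sigma_\lambda^2$. By the continuous mapping theorem, any subsequential limit $S$ of $S_n$ in $D_{\R^k}([0,1]^d)$ must satisfy $\lambda^\top S \stackrel{d}{=} B_{\lambda^\top \Sigma \lambda}$ for every $\lambda$. This determines all finite-dimensional distributions of $S$ as centered Gaussian with the Brownian-sheet covariance $\mathrm{E}[S(\mbf{s})S(\mbf{t})^\top] = [\mbf{s}\wedge\mbf{t}]\,\Sigma$, identifying $S$ as the $k$-dimensional Brownian sheet with covariance $\Sigma$. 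Uniqueness of the limit upgrades subsequential convergence to full weak convergence.

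The main obstacle I anticipate is verifying that tightness in $D_{\R^k}([0,1]^d)$ is indeed implied by the vector-valued modulus-of-continuity condition together with pointwise tightness; the Skorokhod-type metric on $D_{\R^k}([0,1]^d)$ has to be handled carefully, but the Neuhaus framework already invoked in section 1.3 for the Hilbert-space case carries over verbatim once $\|\cdot\|$ denotes the Euclidean norm. A secondary technical point is checking that condition (iii), formulated for Borel sets in $\R^k$, passes to the projected process under the Cramér-Wold step; this is immediate because $\lambda^\top$ is continuous, hence Borel measurable, so preimages of real Borel sets are Borel in $\R^k$.
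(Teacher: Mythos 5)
Your overall strategy---project via $\blambda^\top S_n$, verify Deo's conditions for each projection, establish tightness, and identify subsequential limits---is exactly the route the paper takes. However, there is one genuine gap in your identification of the limit. You claim that knowing $\blambda^\top S \stackrel{d}{=} B_{\blambda^\top\Sigma\blambda}$ for every $\blambda\in\R^k$ ``determines all finite-dimensional distributions of $S$ as centered Gaussian.'' It does not. For time points $\mbf{t}_1,\ldots,\mbf{t}_l$, the Cram\'er--Wold device applied to the $\R^{kl}$-valued vector $\bigl(S(\mbf{t}_1),\ldots,S(\mbf{t}_l)\bigr)$ requires the law of $\sum_{j=1}^l \mu_j^\top S(\mbf{t}_j)$ for \emph{arbitrary} $\mu_1,\ldots,\mu_l\in\R^k$, whereas the convergence of the projected processes only gives you functionals of the restricted form $\sum_{j=1}^l a_j\,\blambda^\top S(\mbf{t}_j)$ with a \emph{common} $\blambda$. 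These restricted directions form a lower-dimensional cone in $\R^{kl}$ (for $k,l\geq 2$), so they determine neither joint Gaussianity across coordinates and time points nor even the independence structure of the increments: knowing that $\blambda^\top S(B_1)$ and $\blambda^\top S(B_2)$ are independent for every single $\blambda$ does not imply that the vectors $S(B_1)$ and $S(B_2)$ are independent. By polarization the restricted projections do pin down the covariance $\mathrm{E}[S(\mbf{s})S(\mbf{t})^\top]$, but covariance alone does not identify the law.

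The missing step---which is how the paper closes this---is to use condition (iii) a second time, now in the limit: pass the asymptotic independence of increments over strongly separated blocks to the subsequential limit $W$ (using that the limiting increments have distributions whose half-spaces $\{x\leq y\}$ are continuity sets), thereby obtaining genuine independence of $W(B_1),\ldots,W(B_p)$, and then apply Cram\'er--Wold to each \emph{single} increment $W(B)$, for which the full family of projections $\blambda^\top W(B)=W^{\blambda}(B)\sim\mathcal{N}(0,\lambda(B)\,\blambda^\top\Sigma\blambda)$, $\blambda\in\R^k$, is available; this yields $W(B)\sim\mathcal{N}(0,\lambda(B)\Sigma)$. The finite-dimensional distributions are then reconstructed by writing each $W(\mbf{t}_j)$ as a sum of increments over disjoint blocks, which are independent Gaussian vectors, and the a.s.\ continuity of $W$ (inherited from its coordinate processes) extends the independence from strongly separated to arbitrary disjoint blocks. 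Your tightness argument and the verification of Deo's conditions for the projections are fine as stated; only the limit identification needs this repair.
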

\begin{proof}
Consider $\blambda\in\R^k$ and define $\{S_n^{\blambda}(\mbf{t})\}_{\mbf{t}\in[0,1]^d}$ by $S_n^{\blambda} (\mbf{t})=\blambda^\top S_n(\mbf{t})$. First, note that for any $x\in D_{\R^k}([0,1]^d)$, $\mbf{t}\in[0,1]^d$ and $\blambda\in\R^k$, it holds that if $Q(\mbf{t})$ is some quadrant in $[0,1]^d$, then $x_{Q(\mbf{t})}(\mbf{t})=\lim_{\mbf{s}\to\mbf{t},\mbf{s}\in Q(\mbf{t})} x(\mbf{s})$ exists, and since $\mbf{y} \mapsto \blambda^\top \mbf{y}$ is a continuous map, it follows that 
\[\lim_{\mbf{s}\to\mbf{t},\mbf{s}\in Q(\mbf{t})}\blambda^\top x(\mbf{s})=\blambda^\top \lim_{\mbf{s}\to\mbf{t},\mbf{s}\in Q(\mbf{t})} x(\mbf{s})= \blambda^\top x_{Q(\mbf{t})}(\mbf{t})\]
also exists. Therefore, if $x\in D_{\R^k}([0,1]^d)$, then $\blambda^\top x \in D_{\R}([0,1]^d)$ and if $x \in C_{\R^k}([0,1]^d)$, then $\blambda^\top x \in C_{\R}([0,1]^d)$. Furthermore, since $D_{\R^k}([0,1]^d)\to D_{\R}([0,1]^d)$, $x\mapsto \blambda^\top x$, is a continuous map, $S_n^{\blambda}$ are random elements in $D_{\R}([0,1]^d)$. Assumptions $(i)-(iii)$ imply:
\begin{itemize}
	\item[(i)] $\mathrm{E}S_n^{\blambda}(\mbf{t})=\blambda^\top \mathrm{E}S_n(\mbf{t})\to 0$, $\Cov\left(\blambda^\top S_n(\mbf{t})\right)=\blambda^\top \Cov\left(S_n(\mbf{t})\right)\blambda\to [\mbf{t}] \blambda^\top \Sigma \blambda$ for any $\mbf{t}\in [0,1]^d$.
	\item[(ii)] $\{|S_n^{\blambda}(\mbf{t})|^2\}_{n\geq 1}$ is uniformly integrable for each $\mbf{t}$, since due to the Cauchy-Schwarz inequality $\left| \blambda^\top S_n(\mbf{t})\right|^2 \leq \|\blambda\|^2 \|S_n(\mbf{t})\|^2$.
	\item[(iii)] For arbitrary linear Borel sets $H_1,\ldots,H_p$, the sets $f^{-1}_{\blambda}(H_1),\ldots,f^{-1}_{\blambda}(H_p)$ where $f_{\blambda}$ is the continuous map $f_{\blambda}:\R^k\to\R$, $\mbf{x} \mapsto \blambda^\top \mbf{x}$, lie in $\mathcal{B}(\R^k)$. Therefore, for any collection of strongly separated blocks $B_1,\ldots,B_p$,
		\begin{align*}
	&\Pr(S_n^{\blambda}(B_1)\in H_1,\ldots,S_n^{\blambda}(B_p)\in H_p) - \prod_{i=1}^p \Pr(S_n^{\blambda}(B_i)\in H_i) \\
	=& \Pr\left(S_n(B_1)\in f^{-1}_{\blambda}(H_1),\ldots,S_n(B_p)\in f^{-1}_{\blambda}(H_p)\right) - \prod_{i=1}^p \Pr\left(S_n(B_i)\in f^{-1}_{\blambda}(H_i)\right) 
	\end{align*}
	goes to zero as $n\to\infty$ by assumption $(iii)$.
	\item[(iv)] Since by the Cauchy-Schwarz inequality 
	\[|\blambda^\top\left\{S_n(\mbf{t})-S_n(\mbf{s})\right\}|\leq \|\blambda\|\|S_n(\mbf{t})-S_n(\mbf{s})\|,\]
	it trivially holds that:
	\[\forall \varepsilon>0, \eta>0\,\exists \delta>0:\; \Pr\left(\omega^1(S_n^{\blambda},\delta)>\varepsilon \|\blambda\|\right) \leq \Pr\left(\omega^k(S_n,\delta)>\varepsilon\right) < \eta\]
\end{itemize}
Therefore, if $\blambda^\top\Sigma\blambda>0$, $(\blambda^\top\Sigma\blambda)^{-1/2}S_n^{\blambda}$ fulfills the conditions of Lemma 3 in \cite{deoFCLT1975} and thus converges to a standardized Brownian sheet $(\blambda^\top\Sigma\blambda)^{-1/2} W^{\blambda}$ in $D_{\R}([0,1]^d)$. By continuous mapping, this implies $S_n^{\blambda} \Rightarrow W^{\blambda}$ in $D_{\R}([0,1]^d)$. If $\blambda^\top\Sigma\blambda=0$, the processes $S_n^{\blambda}\equiv 0$ and $W^{\blambda} \equiv 0$ are both degenerated and therefore $S_n^{\blambda} \Rightarrow W^{\blambda}$ holds trivially.\\
In particular, every coordinate process $S_n^{i}=S_n^{e_i}$ (where $e_i\in\R^k$ is the vector with one in position $i$ and zero elsewhere ($i\in\{1,\ldots,k\}$)) is tight in $D_{\R}([0,1]^d)$ and thus for any $\varepsilon>0$, we can find $M_\varepsilon\in(0,\infty)$ such that
\[\Pr(\|S_n\|_\infty>M_\varepsilon )\leq \sum_{i=1}^k \Pr(\|S_n^{i}\|_\infty>M_\varepsilon)\leq \varepsilon\quad\forall\,n\in\N.\]
Therefore, assumption $(iv)$ implies that $S_n$ is tight in $D_{\R^k}([0,1]^d)$. 
Now, consider a convergent subsequence $S_{n'}$, say $S_{n'}\Rightarrow W$. Then the continuity of the mappings $D_{\R^k}([0,1]^d)\to D_{\R}([0,1]^d)$, $x\mapsto \blambda^\top x$, for any $\blambda\in\R^k$ implies $S_{n'}^{\blambda}=\blambda^\top S_{n'} \Rightarrow \blambda^\top W = W^{\blambda}$, where $W^{\blambda}$ is a Brownian sheet in $D_\R([0,1]^d)$ with covariance $\blambda^\top \Sigma \blambda\geq 0$. In order to show that $S_n$ converges in $D_{\R^k}([0,1]^d)$, it suffices to show that $W$ (and therefore any limit of a convergent subsequence) is indeed the Brownian sheet in $H=\R^k$. Denote the coordinate processes by $W^{i}=W^{e_i}$. 
Since this holds for all the coordinate processes, $W$ is a.s. continuous and $W(\mbf{t})=\undnr{0}$ a.s. for any $\mbf{t}\in[0,1]^d$ with $[\mbf{t}]=0$.\\
From $W\in C_{\R^k}([0,1]^d)$ a.s., it follows that the projection maps $\pi_{\mbf{t}_1,\ldots,\mbf{t}_l}$ are $\Pr_W$-a.s. continuous and therefore $S_{n'}\Rightarrow W$ implies the convergence of the finite dimensional distributions. Since for a block $B=(\mbf{s},\mbf{t}]$, 
\[S_{n'}(B)=\sum\limits_{\boldsymbol{\varepsilon}\in\{0,1\}^d}(-1)^{d-\sum_{j=1}^d \varepsilon_j} S_{n'}\left(\mbf{s}+\boldsymbol{\varepsilon}(\mbf{t}-\mbf{s})\right),\] this implies the weak convergence of the increments of $S_{n'}$.
The increments $W(B)$ of $W$ have a Gaussian distribution with mean zero and covariance $\lambda(B)\Sigma$ in $\R^k$, since $W(B)=\left(W^{1}(B),\ldots,W^{k}(B)\right)^\top$ and $\sum_{i=1}^{k}\lambda_i W^{i}(B)=W^{\blambda}(B)$ is a centered Gaussian random variable with variance $\lambda(B)\blambda^\top \Sigma \blambda$ for any $\blambda\in\R^k$. 
In particular, the distribution of $W(B)$ is absolutely continuous, so that for any collection of strongly separated blocks $B_1$,\ldots,$B_p$ and any $y_1,\ldots,y_p\in\R^k$, we have 
\[\Pr(S_{n'}(B_j)\leq y_j)\to \Pr(W(B_j)\leq y_j)\quad \text{($j=1,\ldots,p$)}\]
and therefore
\begin{align*}
&\Pr(W(B_1)\leq y_1,\ldots, W(B_p)\leq y_p) \\
=& \lim_{n'\to\infty}\Pr(S_{n'}(B_1)\leq y_1,\ldots, S_{n'}(B_p)\leq y_p) \\
=&\lim_{n'\to\infty}\left\{\Pr(S_{n'}(B_1)\leq y_1,\ldots, S_{n'}(B_p)\leq y_p)- \prod_{j=1}^p \Pr(S_{n'}(B_j)\leq y_j)\right\}\\
& + \prod_{j=1}^p \lim_{n'\to\infty} \Pr(S_{n'}(B_j)\leq y_j) \\
\stackrel{(iii)}{=}&\prod_{j=1}^p \Pr(W(B_j)\leq y_j).
\end{align*}
Note that due to the a.s. continuity of $W$, this also yields the independence of the increments over any (not necessarily strongly separated) collection of pairwise disjoint blocks.
\end{proof}

\begin{lemma}\label{le:FCLTRkMixing}
Let $\{X_{\mbf{j}}\}_{\mbf{j}\in \Z^d}$ be an $\R^k$-valued $\rho_{\R}$-mixing, weakly stationary centered random field, $\{S_n(\mbf{t})\}_{\mbf{t}\in[0,1]^d}$ a process in $D_{\R^k}([0,1]^d)$ with 
\[S_n(\mbf{t})= n^{-d/2} \sum\limits_{\undnr{1}\leq \mbf{j}\leq \gaus{n\mbf{t}}}X_{\mbf{j}} \] 
and $\Sigma(n,\mbf{t})=\Cov\left(S_n(\mbf{t})\right)$. If
\begin{itemize}
	\item[(i)] $\sup\limits_{\mbf{j}\in \Z^d}\mathrm{E}\|X_{\mbf{j}}\|^{2+\delta}<\infty$ for some $\delta>0$ and
	\item[(ii)] $\sum_{m\geq 1}m^{d-1}\alpha_{1,1}(m)^{\delta/(2+\delta)}<\infty$,
\end{itemize} 
then $\Sigma(n,\mbf{t})\rightarrow [\mbf{t}]\Sigma$ for any $\mbf{t}\in[0,1]^d$ and a positive semidefinite matrix $\Sigma=(\sigma_{i,j})_{1\leq i,j\leq k}$ with $\sigma_{i,j}=\sum_{\mbf{v}\in\Z^d}\gamma_{i,j}(\mbf{v})$, where $\gamma_{i,j}(\mbf{v})=\Cov(X^{i}_{\undnr{0}},X^{j}_{\mbf{v}})$, and the series converges absolutely. 
Furthermore, $\{S_n(\mbf{t})\}_{\mbf{t}\in[0,1]^d}$ converges in $D_{\R^k}([0,1]^d)$ to a $k$-dimensional Brownian sheet with covariance matrix $\Sigma$.
\end{lemma}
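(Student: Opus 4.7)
The plan is to apply Lemma~\ref{le:FCLTRk} with the candidate covariance matrix $[\mbf{t}]\Sigma$. The argument splits into two pieces: first, establishing the covariance asymptotics $\Sigma(n,\mbf{t})\to[\mbf{t}]\Sigma$ together with the absolute convergence of the series $\sum_{\mbf{v}\in\Z^d}\gamma_{i,j}(\mbf{v})$; second, verifying the four hypotheses (i)--(iv) of Lemma~\ref{le:FCLTRk}.

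For the covariance statement, the key input is a Davydov-type covariance inequality for $\alpha$-mixing: since $X_{\mbf{j}}^i\in L^{2+\delta}$ by condition~(i), one has $|\gamma_{i,j}(\mbf{v})|\le C\,\alpha_{1,1}(\|\mbf{v}\|_\infty)^{\delta/(2+\delta)}$. The number of $\mbf{v}\in\Z^d$ with $\|\mbf{v}\|_\infty=m$ is $\mathcal{O}(m^{d-1})$, so condition~(ii) yields $\sum_{\mbf{v}\in\Z^d}|\gamma_{i,j}(\mbf{v})|<\infty$. In particular, $\Sigma$ is well-defined and, being a limit of covariance matrices, is positive semidefinite. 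Writing $\Sigma(n,\mbf{t})=n^{-d}\sum_{\undnr{1}\leq\mbf{u},\mbf{w}\leq\lfloor n\mbf{t}\rfloor}\gamma(\mbf{w}-\mbf{u})$ and reordering by the lag $\mbf{v}=\mbf{w}-\mbf{u}$, the number of pairs with a given lag is $\prod_{p=1}^d\max(\lfloor n t_p\rfloor-|v_p|,0)$; division by $n^d$ and dominated convergence against the summable majorant then give $\Sigma(n,\mbf{t})\to[\mbf{t}]\Sigma$.

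Hypothesis~(i) of Lemma~\ref{le:FCLTRk} now follows from centering and the computation above. For~(ii), Lemma~\ref{le:RosZhang} applied with $r=2+\delta$ to $S=(\undnr{0},\lfloor n\mbf{t}\rfloor]\cap\Z^d$ gives $\mathrm{E}\|S_n(\mbf{t})\|^{2+\delta}\le C$ uniformly in $n$, which yields the uniform integrability of $\{\|S_n(\mbf{t})\|^2\}_n$. For~(iii), any two members of a strongly separated family of blocks differ in some coordinate direction by at least a fixed positive amount $\zeta$, so the associated index sets are separated by at least $\zeta n$; combined with $\alpha\le\tfrac14\rho_\R$, a telescoping over the $p$ blocks gives
\[\bigl|\Pr(S_n(B_1)\in H_1,\ldots,S_n(B_p)\in H_p)-\textstyle\prod_{i=1}^p \Pr(S_n(B_i)\in H_i)\bigr|\le \tfrac{p-1}{4}\rho_\R(\zeta n)\to 0.\]

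The main obstacle is the tightness condition~(iv). My plan is to extend the argument of Deo~\cite{deoFCLT1975} to vector-valued fields; this is feasible because the maximal inequality~\eqref{eq:maxIneq} in Lemma~\ref{le:RosZhang} is formulated in an arbitrary normed space and hence applies verbatim to $\R^k$. Concretely, partition $[0,1]^d$ into a grid of mesh $\delta$, bound $w^k(S_n,\delta)$ in terms of the supremum of $\|\sum_{\mbf{j}\in W}X_{\mbf{j}}\|$ over standard subblocks $W$ of each grid cell, and apply~\eqref{eq:maxIneq} with $r=2+\delta$. Since each cell's index set has cardinality at most $(n\delta)^d$, the moment bound per cell, once normalized by $n^{d(2+\delta)/2}$, is $C\delta^{d(2+\delta)/2}$; summing over the $\mathcal{O}(\delta^{-d})$ cells and applying Markov's inequality yields $\Pr(w^k(S_n,\delta)>\varepsilon)\le C\,\delta^{d\delta/2}\varepsilon^{-(2+\delta)}$, which can be made smaller than any prescribed tolerance by choosing $\delta$ small. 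With~(i)--(iv) established, Lemma~\ref{le:FCLTRk} delivers the desired weak convergence to the $k$-dimensional Brownian sheet with covariance $\Sigma$.
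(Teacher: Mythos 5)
Your handling of the covariance asymptotics and of conditions (i)--(iii) of Lemma~\ref{le:FCLTRk} matches the paper's proof essentially step for step: the Rio/Davydov covariance inequality together with the count of lattice points at $\|\mbf{v}\|_\infty=m$ gives the absolute summability of $\gamma_{i,j}$, dominated convergence gives $\Sigma(n,\mbf{t})\to[\mbf{t}]\Sigma$, the Rosenthal bound \eqref{eq:momIneq} with $r=2+\delta$ gives the uniform integrability in (ii), and the telescoping over strongly separated blocks gives the asymptotic independence in (iii). The genuine difference, and the gap, is in your verification of the tightness condition (iv), which the paper delegates to the proof of Theorem~1.3 in \cite{bulinskiShashkin} rather than carrying out by hand.

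Your reduction of $w^k(S_n,\delta)$ to maxima of $\|\sum_{\mbf{j}\in W}X_{\mbf{j}}\|$ over standard subblocks $W$ of mesh-$\delta$ grid cells, each of cardinality $(n\delta)^d$, is not valid. For $\|\mbf{t}-\mbf{s}\|\le\delta$ the difference $S_n(\mbf{t})-S_n(\mbf{s})$ is not an increment over a small cube: the symmetric difference of $(\undnr{0},\gaus{n\mbf{t}}]$ and $(\undnr{0},\gaus{n\mbf{s}}]$ is covered by $d$ slabs that are thin ($\lesssim n\delta$) in one coordinate but of full extent $n$ in the remaining $d-1$, hence of cardinality of order $n^{d}\delta$ rather than $(n\delta)^{d}$. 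If you try to recover the slab by triangle-inequality summation of the $\mathcal{O}(\delta^{-(d-1)})$ small cells it contains, you lose the square-root cancellation and obtain a contribution of order $\delta^{d/2}\cdot\delta^{-(d-1)}=\delta^{1-d/2}$, which diverges for $d\ge 2$; so the per-cell bound $C\delta^{d(2+\delta)/2}$ and the resulting rate $\delta^{d\delta/2}$ do not follow from the decomposition you describe. The repair is to apply the maximal inequality \eqref{eq:maxIneq} directly to the $\mathcal{O}(d\,\delta^{-1})$ slabs, giving a normalized moment bound $C\delta^{(2+\delta)/2}$ per slab and hence $\Pr\bigl(w^k(S_n,\delta)>\varepsilon\bigr)\le C\,\delta^{\delta/2}\varepsilon^{-(2+\delta)}$, which still tends to $0$ as $\delta\to 0$; this is precisely the combinatorial reduction performed in the cited proof. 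Your overall strategy for (iv) is therefore repairable, but the key step fails as written.
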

\begin{proof}
As remarked by Guyon \cite{guyon1995} (p. 110), for any $i,j\in\{1,\ldots,k\}$ the covariance inequality (see \cite{rio2013inequalities}, Theorem 1.1, and note that there is an additional factor 2 in Rio's definition of the mixing coefficient)
\[|\gamma_{i,j}(\mbf{v})|=|\Cov(X^{i}_{\undnr{0}},X^{j}_\mbf{v})| \leq 4 \alpha_{1,1}(\|\mbf{v}\|_\infty)^{\delta/(2+\delta)} \|X_{\undnr{0}}\|_{2+\delta}^2 \]
together with assumptions $(i)$ and $(ii)$ implies $\sum_{\mbf{v}\in\Z^d}|\gamma_{i,j}(\mbf{v})|<\infty$.
Using this and the dominated convergence theorem, we obtain 
\begin{align*}
\Sigma(n,\mbf{t})^{(i,j)} &= n^{-d} \Cov\left(\sum\limits_{\undnr{1}\leq\mbf{m}\leq\gaus{n\mbf{t}}}X^{i}_\mbf{m}, \sum\limits_{\undnr{1}\leq\mbf{m}'\leq\gaus{n\mbf{t}}}X^{j}_{\mbf{m}'} \right) \\
&=n^{-d}\sum\limits_{-\gaus{n\mbf{t}}<\mbf{v}<\gaus{n\mbf{t}}}\gamma_{i,j}(\mbf{v})\prod_{l=1}^d(\gaus{nt_l}-|v_l|)\\
&=\sum\limits_{-\undnr{n}\leq\mbf{v}\leq\undnr{n}}I_{\{|\mbf{v}|\leq \gaus{n\mbf{t}}\}}\prod_{l=1}^d \frac{\gaus{nt_l}-|v_l|}{n} \gamma_{i,j}(\mbf{v})\\
&\stackrel{n\to\infty}{\longrightarrow} [\mbf{t}] \sigma_{i,j}
\end{align*}
for any $\mbf{t}\in[0,1]^d$. Furthermore, it follows that the matrix $\Sigma$ is positive semidefinite as the limit of the positive semidefinite covariance matrices $\Sigma(n,\undnr{1})$.

We show that Lemma \ref{le:FCLTRk} can be applied to obtain the stated convergence. First, note that condition $(i)$ of Lemma \ref{le:FCLTRk} is fulfilled, since $\{X_{\mbf{j}}\}_{\mbf{j}\in \Z^d}$ is centered and $\Sigma(n,\mbf{t})\stackrel{n\to\infty}{\longrightarrow} [\mbf{t}]\Sigma$.\\
The assumptions imply the moment inequality \eqref{eq:momIneq} from Lemma \ref{le:RosZhang}. Therefore, condition $(ii)$ follows from 
\[\sup_{n\geq 1}\mathrm{E}\|S_n(\mbf{t})\|^{2+\delta}\leq [\mbf{t}]^{1+\delta/2} C(r,d,X)\leq C(r,d,X)<\infty \]
for any $\mbf{t}$. \\
For strongly separated blocks $B_1=(\mbf{s}_1,\mbf{t}_1],\ldots,B_q=(\mbf{s}_q,\mbf{t}_q]$, there is an $i\in\{1,\ldots,d\}$ such that $0\leq s_1^{i}\leq t_1^{i}< s_2^{i}\leq t_2^{i}<\ldots < s_q^{i}\leq t_q^{i}\leq 1$ (after reordering the blocks if necessary), i.e., $\min\limits_{j=1,\ldots,q-1}(s_{j+1}^{i}-t_j^{i})>0$, and therefore $\min\limits_{j=1,\ldots,q-1}(\gaus{ns_{j+1}^{i}}-\gaus{nt_j^{i}})\to \infty$ for $n\to\infty$. Then
\begingroup
\allowdisplaybreaks
\begin{align*}
& \Pr\left(\bigcap_{j=1}^q \{S_n(B_j)\in H_j\}\right) - \prod_{j=1}^q \Pr(S_n(B_j)\in H_j) \\
=& \Pr\left(\left\{\bigcap_{j=1}^{q-1} \{S_n(B_j)\in H_j\}\right\}\cap\{S_n(B_q)\in H_q\}\right) \\
&- \Pr\left(\bigcap_{j=1}^{q-1} \{S_n(B_j)\in H_j\}\right) \Pr(S_n(B_q)\in H_q)\\
&+ \Pr(S_n(B_q)\in H_q)\left[ \Pr\left(\left\{\bigcap_{j=1}^{q-2} \{S_n(B_j)\in H_j\}\right\}\cap\{S_n(B_{q-1})\in H_{q-1}\}\right)\right.\\
&\left. - \Pr\left(\bigcap_{j=1}^{q-2} \{S_n(B_j)\in H_j\}\right) \Pr(S_n(B_{q-1})\in H_{q-1}) \right]\\
&\resizebox{0.979\linewidth}{!}{$+\Pr(S_n(B_q)\in H_q)\Pr(S_n(B_{q-1})\in H_{q-1})\left[ \Pr\left(\left\{\bigcap_{j=1}^{q-3} \{S_n(B_j)\in H_j\}\right\}\cap \{S_n(B_{q-2})\in H_{q-2}\}\right)\right.$}\\
&\left. - \Pr(S_n(B_1)\in H_1,\ldots, S_n(B_{q-3})\in H_{q-3}) \Pr(S_n(B_{q-2})\in H_{q-2})  \Bigg]\right.\\
&+\ldots + \prod_{j=1}^q \Pr(S_n(B_j)\in H_j) - \prod_{j=1}^q \Pr(S_n(B_j)\in H_j) \\
\leq& q \, \rho_{\R}\left(\min\limits_{j=1,\ldots,q-1}(\gaus{ns_{j+1}^{i}}-\gaus{nt_j^{i}})\right) \stackrel{n\to\infty}{\longrightarrow} 0
\end{align*}%
\endgroup
Thus, condition $(iii)$ of Lemma \ref{le:FCLTRk} is fulfilled. Finally, using (the proof of) Theorem 1.3 in \cite{bulinskiShashkin} (Chapter 5, p. 253), we will now show that condition $(iv)$ of the Lemma is implied by \eqref{eq:momIneq}. As noted in Lemma \ref{le:RosZhang}, \eqref{eq:momIneq} together with assumption $(i)$ imply \eqref{eq:maxIneq} for any block $U$. Analogously to the proof of condition $(ii)$, this implies the uniform integrability of $\{(\#U_n)^{-1} M(U_n)^2\}_{n\geq 1}$ (see \eqref{eq:defMax} for the notation) for any sequence of blocks $U_n$ growing to infinity. The proof of Theorem 1.3 in \cite{bulinskiShashkin} (Chapter 5, p. 253) therefore shows $(iv)$.
\end{proof}

The following corollary of Theorem 4.2 in \cite{billingsley1968} is an adaptation of Lemma 4.1 in \cite{chenwhite1998} to multiparameter processes.
\begin{lemma}\label{le:ChenWhite}
Let $K\in\N$ be fixed and let $\{X_n=(X_{n,1},\ldots,X_{n,K}):\; n\geq 1\}$ be a sequence of $D_H([0,1]^d)^K$-random elements. Let $X_1^k$,\ldots,$X_K^k$ be independent $d$-parameter Brownian motions in $H_k$ with $\mathrm{E}X_i^k(\undnr{1})=0$ and $\Cov X_i^k(\undnr{1})=S_i^k$, $i=1,\ldots,K$. Suppose the following conditions hold:
\begin{itemize}
	\item[(a)] For each $k\geq 1$, $(P_kX_{n,1},\ldots P_k X_{n,K}) \Rightarrow (X_1^k,\ldots,X_K^k)$ in $D_{H_k}([0,1]^d)^K$ as $n\to\infty$;
	\item[(b)] $(X_1^k,\ldots,X_K^k) \Rightarrow (X_1,\ldots,X_K)$ in $D_{H}([0,1]^d)^K$ as $k\to\infty$;
	\item[(c)] \begin{align*}&\limsup_{n\to\infty}\mathrm{E}\left[ \sup_{\mbf{t}_1,\ldots,\mbf{t}_K\in [0,1]^d}\left\|\left(X_{n,1}(\mbf{t}_1),\ldots,X_{n,K}(\mbf{t}_K)\right)-\left(P_k X_{n,1}(\mbf{t}_1),\ldots,P_k X_{n,K}(\mbf{t}_K)\right)\right\|^r\right] \\
	&\longrightarrow 0, \quad\text{ as $k\to\infty$ for some $r\geq 2$.}\end{align*}
\end{itemize}
Then $(X_{n,1},\ldots,X_{n,K})\Rightarrow (X_1,\ldots,X_K)$ in $D_{H}([0,1]^d)^K$, where $X_i$ are independent $d$-parameter Brownian motions in $H$ with $\mathrm{E}X_i(\undnr{1})=0$ and $\Cov X_i(\undnr{1})=S_i$, $i=1,\ldots,K$.
\end{lemma}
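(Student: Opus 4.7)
The statement is a triangular-array weak convergence result, so I would deduce it from Theorem 4.2 in Billingsley (1968) applied in the separable complete metric space $(D_H([0,1]^d)^K, d_S^{\otimes K})$, where $d_S^{\otimes K}$ is any product metric generating the product topology. Set $Y_n=(X_{n,1},\dots,X_{n,K})$, $Y_{n,k}=(P_k X_{n,1},\dots,P_k X_{n,K})$, and $Y_k=(X_1^k,\dots,X_K^k)$. The three hypotheses needed from Billingsley's theorem are: (i) $Y_{n,k}\Rightarrow Y_k$ as $n\to\infty$ for each fixed $k$; (ii) $Y_k\Rightarrow (X_1,\dots,X_K)$ as $k\to\infty$; and (iii) the uniform approximation condition $\lim_{k\to\infty}\limsup_{n\to\infty}\Pr(d_S^{\otimes K}(Y_n,Y_{n,k})\geq \varepsilon)=0$ for every $\varepsilon>0$.

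For (i), the inclusion $H_k\hookrightarrow H$ is isometric, which makes $D_{H_k}([0,1]^d)\hookrightarrow D_H([0,1]^d)$ a continuous (in fact $d_S$-isometric) embedding; hence assumption (a), which is stated in $D_{H_k}([0,1]^d)^K$, automatically yields convergence in $D_H([0,1]^d)^K$ by the continuous mapping theorem. Part (ii) is exactly assumption (b). For (iii) I would use the standard bound $d_S(x,y)\leq \sup_{\mbf{t}}\|x(\mbf{t})-y(\mbf{t})\|$ (take $\lambda=\mathrm{id}$ in the definition of $d_S$), giving
\[
d_S^{\otimes K}(Y_n,Y_{n,k})\;\leq\; \max_{1\leq i\leq K}\sup_{\mbf{t}\in[0,1]^d}\|X_{n,i}(\mbf{t})-P_k X_{n,i}(\mbf{t})\|\;\leq\;\sup_{\mbf{t}_1,\dots,\mbf{t}_K}\bigl\|(X_{n,i}(\mbf{t}_i)-P_k X_{n,i}(\mbf{t}_i))_{i=1}^K\bigr\|,
\]
after which Markov's inequality with exponent $r$ and assumption (c) complete the verification of (iii).

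The argument is essentially bookkeeping; the only point requiring real care is the interaction between the three topologies (the $d_S$ metric on $D_{H_k}$, on $D_H$, and the product metric on $D_H^K$). Since $P_k$ is the orthogonal projection onto a closed subspace, norms agree under the embedding $H_k\hookrightarrow H$, so the convergence in (a) does upgrade to convergence in $D_H^K$ without loss. Once that is settled, the identification of the limit as having independent components $X_i$ follows from (b), and no further work is needed to characterise the joint distribution.
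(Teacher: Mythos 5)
Your proposal is correct and follows exactly the route the paper intends: the paper states this lemma without proof as a corollary of Theorem 4.2 in Billingsley (1968) (adapting Lemma 4.1 of Chen and White), and your three verifications --- (a) via the isometric embedding $D_{H_k}([0,1]^d)\hookrightarrow D_H([0,1]^d)$, (b) directly, and (c) via the bound $d_S(x,y)\leq\sup_{\mbf{t}}\|x(\mbf{t})-y(\mbf{t})\|$ together with Markov's inequality --- are precisely the bookkeeping that citation leaves implicit.
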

Now, we give some preliminary results needed for the proof of Theorem \ref{theo2}. In the next two lemmas, we will establish a Rosenthal inequality for the bootstrapped partial sum process.

\begin{lemma}\label{lemboot1} Let $X,Y$ be random variables taking values in a Hilbert space $H_1$, $X$ is $\mathcal{F}$-measurable and  $Y$ is $\mathcal{G}$-measurable. Let $V$ be a random variable which is independent of $\sigma(\mathcal{F},\mathcal{G})$ and takes values in a Hilbert space $H_2$. Furthermore, let $g,h:H_1\times H_2\rightarrow H$ be measurable functions with
\begin{equation*}
\mathrm{E}\left[g(X,V)\big| V\right]=\mathrm{E}\left[h(Y,V)\big| V\right]=0 \quad a.s.
\end{equation*}
If $\rho=\rho_{\R}(\mathcal{F},\mathcal{G})<1$, then for any $p>1$ such that $\mathrm{E}[\|g(X,V)\|^p] < \infty$ and  $\mathrm{E}[\|h(Y,V)\|^p]<\infty$, there exists a constant $C_{\rho,p}$ such that
\begin{equation*}
\mathrm{E}\left[\big\|g(X,V)\big\|^p\right]\leq C_{\rho,p}\mathrm{E}\left[\big\|g(X,V)+h(Y,V)\big\|^p\right].
\end{equation*}
\end{lemma}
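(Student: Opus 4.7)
The plan is to reduce the statement to a fixed value of $V$ by conditioning, and then to exploit the $L^p$-contraction of conditional expectation that is induced by the $\rho_\R$-gap $\rho<1$. Since $V$ is independent of $\sigma(\mathcal{F},\mathcal{G})$, the conditional joint distribution of $\mathcal{F}$ and $\mathcal{G}$ given $V=v$ coincides with the unconditional one; in particular $\rho_\R(\mathcal{F},\mathcal{G})\leq\rho$ is preserved under conditioning on $V$. Together with the assumptions $\mathrm{E}[g(X,V)\mid V]=\mathrm{E}[h(Y,V)\mid V]=0$ a.s., this means that for $P_V$-a.e.\ $v$ the random elements $g(X,v)$ and $h(Y,v)$ are centered $\mathcal{F}$- respectively $\mathcal{G}$-measurable elements of $L^p(\Omega,H)$. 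It therefore suffices to prove the fixed-$v$ statement
\[\mathrm{E}\|G\|^p\leq C_{\rho,p}\,\mathrm{E}\|G+H'\|^p\]
for centered $G\in L^p(\mathcal{F},H)$ and $H'\in L^p(\mathcal{G},H)$ with $\rho_\R(\mathcal{F},\mathcal{G})\leq\rho<1$; integration against $P_V$ then yields the lemma.

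For this fixed-$v$ statement set $Z=G+H'$. The $\mathcal{F}$-measurability of $G$ gives the decomposition $G=\mathrm{E}[Z\mid\mathcal{F}]-\mathrm{E}[H'\mid\mathcal{F}]$, whence by Minkowski and the Jensen contraction of conditional expectation
\[\|G\|_p\leq \|Z\|_p+\|\mathrm{E}[H'\mid\mathcal{F}]\|_p.\]
The crux is the $L^p$-contraction $\|\mathrm{E}[H'\mid\mathcal{F}]\|_p\leq c_{\rho,p}\|H'\|_p$ for some $c_{\rho,p}<1$. For $p=2$ this is immediate from the identity $\rho_H=\rho_\R$ recalled in the remark preceding the lemma: plugging $X:=\mathrm{E}[H'\mid\mathcal{F}]$ into the variational characterization of $\rho_H(\mathcal{F},\mathcal{G})$ and noting $\mathrm{E}\sprod{X,H'}=\|X\|_2^2$ (by the tower property) gives $\|X\|_2\leq\rho\|H'\|_2$. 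Combining this with the trivial bound $\|\mathrm{E}[\cdot\mid\mathcal{F}]\|_{L^\infty\to L^\infty}\leq 1$, Riesz--Thorin interpolation (reduced to scalar components via a CONS of $H$) yields $c_{\rho,p}=\rho^{2/p}$ for $p\geq 2$. For $1<p<2$ the analogous bound $c_{\rho,p}=\rho^{2(p-1)/p}$ follows by duality, since the $L^2$-adjoint of $\mathrm{E}[\cdot\mid\mathcal{F}]$ acting on centered $L^2(\mathcal{G})$ is the conditional expectation $\mathrm{E}[\cdot\mid\mathcal{G}]$ on centered $L^2(\mathcal{F})$, which contracts with the same factor $\rho$.

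Closing the loop via $\|H'\|_p\leq\|Z\|_p+\|G\|_p$ gives
\[\|G\|_p\leq (1+c_{\rho,p})\|Z\|_p+c_{\rho,p}\|G\|_p,\]
hence $\|G\|_p\leq\frac{1+c_{\rho,p}}{1-c_{\rho,p}}\|Z\|_p$ and the stated inequality with $C_{\rho,p}=\bigl(\frac{1+c_{\rho,p}}{1-c_{\rho,p}}\bigr)^{p}$. The main obstacle is the $L^p$-contraction of conditional expectation under $\rho_\R<1$ for arbitrary $p>1$: the $L^2$ case is the decisive input and drops out of Bradley's identity, while the extension to $p\neq 2$ is a standard but somewhat delicate interpolation argument in the Hilbert-valued setting, most cleanly handled by passing to scalar components.
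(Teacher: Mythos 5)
Your reduction to a fixed value of $V$ is exactly the paper's first (and only substantive) step: the authors also condition on $V=v$, observe that independence of $V$ from $\sigma(\mathcal{F},\mathcal{G})$ turns the claim into a statement about centered $\mathcal{F}$- and $\mathcal{G}$-measurable $H$-valued variables, and then integrate against the law of $V$. The difference is that at that point the paper simply invokes Theorem~1 of Zhang (1998) for the fixed-$v$ inequality, whereas you try to prove that core inequality from scratch, and that is where your argument has a genuine gap.

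The gap is the claimed contraction $\|\mathrm{E}[H'\mid\mathcal{F}]\|_p\leq c_{\rho,p}\|H'\|_p$ with $c_{\rho,p}=\rho^{2/p}<1$. The $L^2$ bound with constant $\rho$ holds only on the \emph{subspace} of centered $\mathcal{G}$-measurable variables, and Riesz--Thorin does not interpolate an operator bound that is valid only on a subspace: the canonical analytic family $f_z$ used in the three-lines argument does not preserve the mean-zero constraint, and the subcouple $(L^\infty_0,L^2_0)$ is not an exact interpolation couple with constant one. The honest way to globalize is to interpolate $Tf=\mathrm{E}[f\mid\mathcal{F}]-\mathrm{E}f$ on all of $L^p(\mathcal{G})$, which has $L^2$-norm at most $\rho$ and $L^\infty$-norm at most $2$; this yields $c_{\rho,p}\leq 2^{1-2/p}\rho^{2/p}$, which is \emph{not} less than $1$ once $\rho>2^{1-p/2}$ (e.g.\ $p=4$, $\rho=0.9$ gives $\sqrt{1.8}>1$). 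Since your final step solves $\|G\|_p\leq(1+c_{\rho,p})\|Z\|_p+c_{\rho,p}\|G\|_p$ for $\|G\|_p$, it collapses entirely unless $c_{\rho,p}<1$; so the argument fails precisely in the regime of $\rho$ close to $1$ that the lemma must cover (the hypothesis is only $\rho<1$, and in the application $\rho=\lim_\tau\rho_\R(\tau)$ can be arbitrarily close to $1$). The same subspace/duality issue affects your $1<p<2$ case, where the dual of the centered subspace is a quotient, not the centered subspace of the dual. Establishing the fixed-$v$ inequality under the sole hypothesis $\rho<1$ is the actual content of Zhang's Theorem~1, and it requires a more delicate argument than contraction plus a Neumann-series bound; as written, your proof replaces that citation with a step that does not go through.
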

\begin{proof}
We will make use of the conditional expectations 
\[\mathrm{E}\left[\big\|g(X,V)\big\|^p\big| V=v\right]=\mathrm{E}[\|g(X,v)\|^p] \quad\text{ and }\quad \mathrm{E}\left[\big\|h(Y,V)\big\|^p\big| V=v\right]=\mathrm{E}[\|h(Y,v)\|^p].\]
$g(X,v)$ and $h(Y,v)$ are $H$-valued random variables which are $\mathcal{F}$- and $\mathcal{G}$-measurable, respectively. So we can apply Theorem 1 of \cite{zhang1998} to the conditional expectations and obtain
\begin{equation*}
\mathrm{E}\left[\big\|g(X,V)\big\|^p\big| V=v\right]\leq C_{\rho,p}\mathrm{E}\left[\big\|g(X,V)+h(Y,V)\big\|^p\big| V=v\right]
\end{equation*}
and consequently
\begin{align*}
\mathrm{E}\left[\big\|g(X,V)\big\|^p\right]&=\mathrm{E}\left[\mathrm{E}\left[\big\|g(X,V)\big\|^p\big| V\right]\right]\\
&\leq \mathrm{E}\left[C_{\rho,p}\mathrm{E}\left[\big\|g(X,V)+h(Y,V)\big\|^p\big| V\right]\right]\\
&=C_{\rho,p}\mathrm{E}\left[\big\|g(X,V)+h(Y,V)\big\|^p\right].
\end{align*}
\end{proof}

\begin{lemma}\label{lemboot2} Under the assumptions of Theorem \ref{theo2}, for any $r\geq 2$ there exists a constant $B_{d,r}$ such that for any finite subset $S\subset \mathbb{N}^d$ and $i\in\{1,\ldots,K\}$
\begin{align*}
&\mathrm{E}\Big\|\sum_{\mathbf{k}\in S^{(n)}}(X_\mbf{k}-\mu)V_{n,i}(\mathbf{k})\Big\|^r\\
\leq& B_{d,r}\left\{\sum_{\mathbf{k}\in S^{(n)}}\mathrm{E}\left\|(X_\mbf{k}-\mu)\right\|^r \mathrm{E}\left\|V_{n,i}(\mathbf{k})\right\|^r+\Big(\sum_{\mathbf{k}\in S^{(n)}}\mathrm{E}\left\|(X_\mbf{k}-\mu)\right\|^2 \mathrm{E}\left\|V_{n,i}(\mathbf{k})\right\|^2\Big)^{r/2}\right\},
\end{align*}
where $S^{(n)}=S\cap\{1,\ldots,n\}$.
For any block $U\subseteq\{1,\ldots,n\}^d$ and 
\[M^\star(U)=\max\limits_{W\triangleleft U}\left\|\sum\limits_{\mbf{j}\in W}(X_\mbf{j}-\mu)V_{n,i}(\mbf{j})\right\|, \]
it then holds that 
\[
\mathrm{E}\left[M^\star(U)^r\right] \leq C_r (\#U)^{r/2}
\]
for $r\in(2,2+\delta]$ and some $C_r>0$ that may depend on $r$ but not on $U$ or $n$.
\end{lemma}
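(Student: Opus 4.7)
The plan is to extend Lemma \ref{le:RosZhang} to the bootstrapped summands $Y_\mbf{k} = (X_\mbf{k} - \mu)V_{n,i}(\mbf{k})$ by conditioning on the multiplier field. Because $V_{n,i}$ is independent of the $X$-field, given $V_{n,i} = v$ the process $\{Y_\mbf{k}\}_\mbf{k}$ is a coordinate-wise measurable transform of $\{X_\mbf{k}\}_\mbf{k}$, so its conditional $\rho_\R$-mixing coefficients are dominated by $\rho_{\R, X}(\cdot)$ uniformly in $v$. The hypotheses of Lemma \ref{le:RosZhang} are therefore met for the conditional field.

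In the first step, I would apply \eqref{eq:rosenthalIneq} conditionally on $V_{n,i}$ and use the linear dependence of $Y_\mbf{k}$ on $V_{n,i}(\mbf{k})$ to pull the multiplier factors out of the $\mathrm{E}\|\cdot\|^r$ and $\mathrm{E}\|\cdot\|^2$ terms inside the bound. Taking expectations then produces $\sum_\mbf{k} \mathrm{E}|V_{n,i}(\mbf{k})|^r\,\mathrm{E}\|X_\mbf{k}-\mu\|^r$ for the first sum and $\mathrm{E}\bigl(\sum_\mbf{k} V_{n,i}(\mbf{k})^2\,a_\mbf{k}\bigr)^{r/2}$ for the second, where $a_\mbf{k} = \mathrm{E}\|X_\mbf{k}-\mu\|^2$. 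The second quantity I would bound via Minkowski's inequality in $L^{r/2}$ (valid since $r \ge 2$), using that $\|V_{n,i}(\mbf{k})^2\|_{L^{r/2}} = (\mathrm{E}|V_{n,i}(\mbf{k})|^r)^{2/r}$ is a $\mbf{k}$- and $n$-independent constant because all $V_{n,i}(\mbf{k})$ are standard normal. Since $\mathrm{E} V_{n,i}(\mbf{k})^2 = 1$, this constant factor can be absorbed into $B_{d,r}$ to produce exactly the form claimed in the lemma.

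For the maximal inequality, I would combine the derived Rosenthal bound with $\sup_\mbf{k}\mathrm{E}\|X_\mbf{k}-\mu\|^r < \infty$ for $r \in (2, 2+\delta]$ (ensured by stationarity and $\mathrm{E}\|X_{\undnr{1}}\|^{2+\delta} < \infty$) together with the uniform bound on $\mathrm{E}|V_{n,i}(\mbf{k})|^r$ to obtain $\mathrm{E}\|\sum_{\mbf{j}\in W}Y_\mbf{j}\|^r \le C_r(\#W)^{r/2}$ for any block $W \subseteq \{1,\dots,n\}^d$, with a constant $C_r$ not depending on $W$ or $n$. Corollary 1 of \cite{moricz1983} then upgrades this to the claimed bound on $M^\star(U)$, exactly as in the step from \eqref{eq:momIneq} to \eqref{eq:maxIneq} in the proof of Lemma \ref{le:RosZhang}.

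The main subtlety lies in the conditional application of Lemma \ref{le:RosZhang}: one must verify that the conditional $\rho_\R$-mixing coefficients are genuinely bounded by the unconditional ones (which follows from the existence of a regular conditional distribution of $X$ given $V_{n,i}$, available since both live on Polish spaces and they are independent) and that a Fubini-type exchange with integration over $V_{n,i}$ is legitimate. An alternative, perhaps more transparent, route would be to mimic Zhang's proof of Theorem 2 in \cite{zhang1998} directly on the bootstrapped terms, employing Lemma \ref{lemboot1} in place of Zhang's symmetrization step. This delivers the $d = 1$ base case without any conditioning; the induction over $d$ used in the proof of Lemma \ref{le:RosZhang} then transfers verbatim, with $Y_\mbf{j}$ replacing $X_\mbf{j}$ throughout.
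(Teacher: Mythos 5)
Your proposal is correct, and your primary route is genuinely different from the paper's. The paper's proof of this lemma is a single sentence: it re-runs the proof of Theorem~2 of \cite{zhang1998} and the induction over $d$ from Lemma~\ref{le:RosZhang} on the products $(X_\mbf{k}-\mu)V_{n,i}(\mbf{k})$ directly, with Lemma~\ref{lemboot1} substituting for Zhang's Theorem~1 in the $d=1$ base case --- i.e.\ exactly the ``alternative, more transparent route'' you sketch at the end. Your main argument instead treats the already-proved unconditional inequality \eqref{eq:rosenthalIneq} as a black box: condition on the multiplier field (legitimate, since by independence the conditional law of $\{X_\mbf{k}\}$ given $V_{n,i}=v$ is its unconditional law, so the conditional mixing coefficients of $\{(X_\mbf{k}-\mu)v(\mbf{k})\}$ are dominated by $\rho_{\R,X}$ uniformly in $v$ and the conditional field is centered), pull the scalars $|v(\mbf{k})|$ out of the moments, integrate over $V_{n,i}$, and control $\mathrm{E}\bigl(\sum_\mbf{k} V_{n,i}(\mbf{k})^2 a_\mbf{k}\bigr)^{r/2}$ by Minkowski in $L^{r/2}$ using that $\bigl(\mathrm{E}|V_{n,i}(\mbf{k})|^{r}\bigr)^{2/r}$ is a universal Gaussian constant and $\mathrm{E}V_{n,i}(\mbf{k})^2=1$. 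This yields exactly the stated bound with the Gaussian moment constants absorbed into $B_{d,r}$, and the maximal-inequality step via \cite{moricz1983} is identical in both treatments. What each approach buys: yours avoids re-opening Zhang's argument and in fact renders Lemma~\ref{lemboot1} unnecessary for this purpose, at the cost of the extra Minkowski decoupling step and of checking that the constant in \eqref{eq:rosenthalIneq} can be chosen uniformly over the conditional fields (it can, since it depends on the mixing rate monotonically, the same point the paper already uses implicitly in the induction of Lemma~\ref{le:RosZhang}); the paper's route gives a Rosenthal inequality for the bootstrapped field as such, without conditioning, and generalizes more readily to non-Gaussian or non-linear multiplier schemes where the factors cannot be pulled out of the norms.
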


\begin{proof} This inequality follows in the same way as Theorem 2 of \cite{zhang1998} and Lemma \ref{le:RosZhang} above, using Lemma \ref{lemboot1} instead of Theorem 1 of \cite{zhang1998}.
\end{proof}

\begin{lemma}\label{rem:LRV} Under the assumptions of Theorem \ref{theo2}, for any $B\subseteq (0,1]^d$ which is either a block or a finite union of disjoint blocks, we have 
\begin{equation*}
\hat{\Sigma}_n(B):=\sum_{\mbf{h}\in B_{n}\ominus B_{n}}\omega\left(\mbf{h}/q\right)\frac{1}{n^d}\sum_{\mbf{a}:\,\mathbf{a},\mbf{a}+\mbf{h}\in B_{n}}\left\{X^{(k)}_{\mathbf{a}}-\hat{\mu}^{(k)}(\mbf{a})\right\}\left\{X^{(k)}_{\mathbf{a}+\mbf{h}}-\hat{\mu}^{(k)}(\mbf{a}+\mbf{h})\right\}^\top\xrightarrow{\Pr}\lambda(B)\Sigma,
\end{equation*}
where $\Sigma$ is the long-run variance matrix of $X^{(k)}$, $k\in\N$.
\end{lemma}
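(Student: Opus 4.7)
The plan is to compare $\hat\Sigma_n(B)$ with the oracle estimator
\begin{equation*}
\tilde\Sigma_n(B):=\sum_{\mbf{h}\in B_n\ominus B_n}\omega(\mbf{h}/q)\,\frac{1}{n^d}\sum_{\mbf{a}:\,\mbf{a},\mbf{a}+\mbf{h}\in B_n}(X^{(k)}_{\mbf{a}}-\mu^{(k)})(X^{(k)}_{\mbf{a}+\mbf{h}}-\mu^{(k)})^\top,
\end{equation*}
in which $\hat\mu$ is replaced by the true mean $\mu$, and to prove separately that $\tilde\Sigma_n(B)\xrightarrow{\Pr}\lambda(B)\Sigma$ and that the remainder $R_n(B):=\hat\Sigma_n(B)-\tilde\Sigma_n(B)$ is $o_P(1)$.

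For the oracle, stationarity yields $\mathrm{E}\tilde\Sigma_n(B)=\sum_\mbf{h}\omega(\mbf{h}/q)\,n^{-d}\#\{\mbf{a}:\mbf{a},\mbf{a}+\mbf{h}\in B_n\}\,\gamma^{(k)}(\mbf{h})$ with $\gamma^{(k)}(\mbf{h})=\Cov(X^{(k)}_{\undnr{0}},X^{(k)}_{\mbf{h}})$. Rio's covariance inequality, together with the $\alpha_{1,1}$-mixing and the $(2+\delta)$-moment assumptions of Theorem~\ref{thm:FCLT}, gives $\sum_{\mbf{h}\in\Z^d}\|\gamma^{(k)}(\mbf{h})\|<\infty$. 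Because $n^{-d}\#\{\mbf{a}:\mbf{a},\mbf{a}+\mbf{h}\in B_n\}\to\lambda(B)$ for each fixed $\mbf{h}$ when $B$ is a block or finite disjoint union of blocks, and $\omega(\mbf{h}/q)\to\omega(\undnr{0})=1$ as $q\to\infty$, dominated convergence (with envelope $\|\omega\|_\infty\|\gamma^{(k)}(\cdot)\|$) yields $\mathrm{E}\tilde\Sigma_n(B)\to\lambda(B)\Sigma$. To show $\var(\tilde\Sigma_n(B))\to 0$ entrywise, I expand the variance as a fourfold sum of pair-covariance products plus a sum of fourth-order cumulants. After reindexing by $\mbf{w}=\mbf{a}_2-\mbf{a}_1$ and $\mbf{s}=\mbf{h}_1\pm\mbf{h}_2$ and invoking $\sum_\mbf{h}|\omega(\mbf{h}/q)|=O(q^d)$ together with the absolute summability of $\gamma^{(k)}$, the covariance-product contribution reduces to $O(q^d/n^d)$. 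The cumulant contribution is bounded via Rio's covariance inequality applied to products of centered pairs, which is where the stronger hypotheses $\mathrm{E}\|X_{\undnr{1}}\|^{4+2\delta}<\infty$ and $\sum_m m^{d-1}\alpha_{2,2}(m)^{\delta/(2+\delta)}<\infty$ of Theorem~\ref{theo2} become essential (cf.\ \cite{lavancier2008,bHeuser}); the resulting bound is again $O(q^d/n^d)=o(1)$ since $q=o(\sqrt n)$.

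For the remainder, expand the product difference
\begin{equation*}
\{X^{(k)}_{\mbf{a}}-\hat\mu^{(k)}(\mbf{a})\}\{X^{(k)}_{\mbf{a}+\mbf{h}}-\hat\mu^{(k)}(\mbf{a}+\mbf{h})\}^\top-\{X^{(k)}_{\mbf{a}}-\mu^{(k)}\}\{X^{(k)}_{\mbf{a}+\mbf{h}}-\mu^{(k)}\}^\top
\end{equation*}
into two cross terms linear in $\hat\mu-\mu$ and one quadratic term. The decisive input is the uniform rate $\max_{\mbf{i}}\|\hat\mu(\mbf{i})-\mu\|=O_P(n^{-d/2})$ under the null, valid for both admissible estimators: for $\hat\mu=\bar X_n$ this is immediate from Theorem~\ref{thm:FCLT}, while for $\hat\mu=\tilde\mu(\cdot)$ one writes the subblocks $\hat C_n$ and $\hat C_n^c$ as signed combinations of at most $2^d$ blocks rooted at $\undnr{0}$, applies the maximal inequality of Lemma~\ref{le:RosZhang}, and exploits $\#\hat C_n\geq\varepsilon_1 n^d$ and $\#\hat C_n^c\geq\varepsilon_2 n^d$ to obtain the same rate for both partial averages simultaneously. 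Since $\hat\mu(\cdot)$ takes at most two distinct values, each linear cross term factorises after splitting the $\mbf{a}$-sum, and Lemma~\ref{le:RosZhang} applied to the inner partial sums yields a bound $O_P(q^d\cdot n^{d/2}/n^d)\cdot O_P(n^{-d/2})=O_P(q^d/n^d)=o_P(1)$; the quadratic term is bounded analogously by $O_P(q^d/n^d)$. The principal technical difficulty lies in the variance bound for $\tilde\Sigma_n(B)$---in particular the summability of the fourth-order cumulants---which is what forces the additional hypotheses \eqref{eq:assAlpha2} and $\mathrm{E}\|X_{\undnr{1}}\|^{4+2\delta}<\infty$ of Theorem~\ref{theo2}.
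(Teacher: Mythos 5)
Your overall strategy coincides with the paper's: split $\hat\Sigma_n(B)$ into the oracle estimator built from $Y_{\mbf a}=X^{(k)}_{\mbf a}-\mu^{(k)}$ plus a remainder driven by $\hat\mu-\mu$, show the oracle's expectation converges via absolute summability of the autocovariances and dominated convergence, show its variance vanishes, and kill the remainder using the uniform rate $\max_{\mbf i}\|\hat\mu(\mbf i)-\mu\|=O_P(n^{-d/2})$. Your treatment of the remainder is in fact more self-contained than the paper's, which simply cites \cite{bHeuser} for $|\hat\Sigma_n(B)-\hat\Sigma_{Y,n}(B)|\xrightarrow{\Pr}0$; the factorization argument exploiting that $\hat\mu(\cdot)$ takes at most two values, combined with the maximal inequality of Lemma \ref{le:RosZhang}, is sound.

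The one step that does not go through as written is your bound on the fourth-order-cumulant contribution to the variance of the oracle term. You claim it is $O(q^d/n^d)$ under \eqref{eq:assAlpha2} and $\mathrm{E}\|X_{\undnr{1}}\|^{4+2\delta}<\infty$. What these hypotheses actually deliver (via the covariance inequality applied to the products $Y_{\mbf a}Y_{\mbf a+\mbf h}$, viewed as functions of two-point index sets, whence $\alpha_{2,2}$) is $\sum_{\mbf l}|\Cov(Y_{\undnr{0}}Y_{\mbf h},Y_{\mbf l}Y_{\mbf l+\mbf h'})|\le C$ uniformly in $\mbf h,\mbf h'$ --- summability in the lag $\mbf l$ only, with no decay in $\mbf h'-\mbf h$. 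Summing over the two kernel indices then gives $O(q^{2d}/n^d)$, not $O(q^d/n^d)$; gaining the extra factor $q^{-d}$ requires genuine summability of the fourth-order cumulants in all three lags, which --- as the remark following Theorem \ref{theo2} states explicitly --- needs strictly stronger mixing and integrability conditions (\cite{guyon1995}, Lemma 4.6.2). This is precisely why the paper settles for the cruder Minkowski-type bound
\[
\mathrm{E}\Bigl[\bigl(\hat\Sigma_{Y,n}(B)-\mathrm{E}\hat\Sigma_{Y,n}(B)\bigr)^2\Bigr]\le \Bigl(\sum_{\mbf h}|\omega(\mbf h/q)|\, C\, n^{-d/2}\Bigr)^2 = O\bigl(q^{2d}/n^d\bigr)
\]
and why the bandwidth is restricted to $q=o(\sqrt n)$ rather than $q=o(n)$. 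The error is harmless for the lemma itself, since $q^{2d}/n^d\to 0$ under the stated bandwidth condition, so your proof is repaired by replacing the claimed rate with $O(q^{2d}/n^d)$; but as written the cumulant step is not justified by the hypotheses you invoke.
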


\begin{proof} Consider the centered process $\{Y^{(k)}_\mbf{j}\}_{\mbf{j}\in\Z^d}$ with $Y^{(k)}_\mbf{j}=X^{(k)}_\mbf{j}-\mu^{(k)}$. As shown in \cite{bHeuser}, $\left|\hat{\Sigma}_n(B)-\hat{\Sigma}_{Y,n}(B)\right|\xrightarrow{\Pr}0$, where 
\[\hat{\Sigma}_{Y,n}(B) = \sum_{\mbf{h}\in B_{n}\ominus B_{n}}\omega\left(\mbf{h}/q\right)\frac{1}{n^d}\sum_{\mbf{a}:\,\mathbf{a},\mbf{a}+\mbf{h}\in B_{n}}Y^{(k)}_{\mathbf{a}} Y^{(k)\top}_{\mathbf{a}+\mbf{h}}.\]
To obtain the stated convergence, it therefore suffices to show that 
\[\text{(i)}\  \rm{E}\left[\hat{\Sigma}_{Y,n}(B)\right]\longrightarrow \lambda(B)\Sigma \quad\text{and}\quad (ii)\ \mathrm{E}\left[\left(\hat{\Sigma}_{Y,n}(B)-\mathrm{E}\left[\hat{\Sigma}_{Y,n}(B)\right]\right)^2\right]\longrightarrow 0.\]
Only a slight modification of the proof by Lavancier \cite{lavancier2008} (who considered $B=(0,1]^d$ and slightly less general kernel functions $\omega$) is needed to obtain (i). For (ii), we concentrate on the case $k=1$ to simplify notation, but the cases $k\geq 2$ work the same way. Note that by assumption \eqref{eq:assAlpha2} (see \cite{guyon1995}, p. 110), there exists a $C>0$ such that
\begin{align*}
 &\mathrm{E}\left[\left\{\frac{1}{n^d} \sum_{\mbf{a}:\,\mathbf{a},\mbf{a}+\mbf{h}\in B_{n}}\left(Y^{(1)}_{\mathbf{a}}Y^{(1)}_{\mathbf{a}+\mbf{h}}-\mathrm{E}\left[Y^{(1)}_{\mathbf{a}}Y^{(1)}_{\mathbf{a}+\mbf{h}}\right]\right)\right\}^2\right] \\
=& \frac{1}{n^{2d}} \sum_{\mbf{a},\mbf{a}':\,\mathbf{a},\mbf{a}',\mbf{a}+\mbf{h},\mbf{a}'+\mbf{h}\in B_{n}}\Cov\left(Y^{(1)}_{\mathbf{a}}Y^{(1)}_{\mathbf{a}+\mbf{h}},Y^{(1)}_{\mathbf{a}'}Y^{(1)}_{\mathbf{a}'+\mbf{h}}\right)\\
\leq& \frac{1}{n^d} \sum_{\mbf{l}\in\Z^d} \left| \Cov\left(Y^{(1)}_{\undnr{0}}Y^{(1)}_{\mbf{h}},Y^{(1)}_{\mbf{l}}Y^{(1)}_{\mbf{l}+\mbf{h}}\right)\right| \leq C \frac{1}{n^d}.
\end{align*}
Therefore
\begin{align*}
& \mathrm{E}\left[\left(\hat{\Sigma}_{Y,n}(B)-\mathrm{E}\left[\hat{\Sigma}_{Y,n}(B)\right]\right)^2\right] \\
\leq& \left(\sum_{\mbf{h}\in B_{n}\ominus B_{n}} \left|\omega\left(\mbf{h}/q\right)\right| \left\|\frac{1}{n^d} \sum_{\mbf{a}:\, \mathbf{a},\mbf{a}+\mbf{h}\in B_{n}}\left(Y^{(1)}_{\mathbf{a}}Y^{(1)}_{\mathbf{a}+\mbf{h}}-\mathrm{E}\left[Y^{(1)}_{\mathbf{a}}Y^{(1)}_{\mathbf{a}+\mbf{h}}\right]\right)\right\|_2\right)^2 \\
\leq& C \frac{1}{n^d} \left(\sum\limits_{-\undnr{n}\leq\mbf{j}\leq \undnr{n}}|\omega(\mbf{j}/q)| \right)^2 \leq C \frac{q^{2d}}{n^d}\longrightarrow 0.
\end{align*}
\end{proof}

\end{subsection}

\begin{subsection}{Proofs of the main results}

\begin{proof}[Proof of Theorem \ref{thm:FCLT}]
We assume without loss of generality that $\mu=0$ and proceed as in the proof of Theorem 1 in \cite{sharipov2014} by showing the three conditions of Lemma \ref{le:ChenWhite}. First, note that for any $h\in H\setminus \{0\}$, the random field $\{Y_\mbf{j}\}_{\mbf{j}\in\Z^d}$ with $Y_\mbf{j}=\sprod{X_\mbf{j},h}$ is centered, stationary and $\rho_{\R}$-mixing with $\rho_{\R,Y}(x)\leq \rho_{\R,X}(x)$ and $\alpha_{1,1,Y}(x)\leq\alpha_{1,1,X}(x)$, since any $Y_\mbf{j}$ is a measurable transform of $X_\mbf{j}$. Furthermore,
\[\mathrm{E}|Y_\mbf{j}|^{2+\delta} \leq \|h\|^{2+\delta} \mathrm{E}\|X_\mbf{j}\|^{2+\delta}\]
ensures that $\{Y_\mbf{j}\}_{\mbf{j}\in\Z^d}$ has finite $(2+\delta)$-moments. Now, Lemma \ref{le:FCLTRkMixing} implies
\[\left\{\frac{1}{n^{d/2}}\sum\limits_{\undnr{1}\leq \mbf{j}\leq \gaus{n\mbf{t}}} Y_\mbf{j} \right\}_{\mbf{t}\in[0,1]^d} \Rightarrow \{W_h(\mbf{t})\}_{\mbf{t}\in[0,1]^d}, \quad \text{in $D_{\R}([0,1]^d)$,}\]
where $\{W_h(\mbf{t})\}_{\mbf{t}\in[0,1]^d}$ is a Brownian sheet in $\R$ with covariance 
\[\sigma^2(h)=\sum_{\mbf{j}\in\Z^d}\mathrm{E}Y_{\undnr{0}}Y_\mbf{j} = \sum_{\mbf{j}\in\Z^d}\mathrm{E}\left[\sprod{X_{\undnr{0}},h}\sprod{X_\mbf{j},h}\right],\]
and the series converges absolutely. Define the covariance operator $S$ as in \eqref{eq:CovOp}, then $\sprod{Sh,h}=\sigma^2(h)$ holds for all $h\in H\setminus\{0\}$, and $S$ is positive, linear and self-adjoint. Then $S\in\mathcal{S}(H)$, because for any complete orthonormal system $\{e_i\}_{i\in\N}$ in $H$, we obtain
\[\sum_{i=1}^\infty |\sprod{Se_i,e_i}| = \sum_{i=1}^\infty \sprod{Se_i,e_i}= \sum_{i=1}^\infty \lim_{n\to\infty}n^{-d}\mathrm{E}\left[\sum\limits_{\undnr{1}\leq \mbf{j}\leq \undnr{n}}\sprod{X_\mbf{j},e_i}\right]^2 \]
and Theorem 28.10 of \cite{bradley2007} (Volume III, p. 154) implies
\[n^{-d}\mathrm{E}\left[\sum\limits_{\undnr{1}\leq \mbf{j}\leq \undnr{n}}\sprod{X_\mbf{j},e_i}\right]^2\leq C  n^{-d} \sum\limits_{\undnr{1}\leq \mbf{j}\leq \undnr{n}} \mathrm{E}[\sprod{X_\mbf{j},e_i}^2] \leq C \mathrm{E}[\sprod{X_{\undnr{0}},e_i}^2]\]
with a single constant $C$ for all $n$ and $i$. 
Therefore,
\[\sum_{i=1}^\infty |\sprod{Se_i,e_i}| \leq C \sum_{i=1}^\infty \mathrm{E}[\sprod{X_{\undnr{0}},e_i}^2] = C \mathrm{E}\|X_{\undnr{0}}\|^2 < \infty.\]
Define $S_n(\mbf{t})=n^{-d/2}\sum\limits_{\undnr{1}\leq \mbf{k}\leq \gaus{n\mbf{t}}} X_\mbf{k}$ and consider a Brownian sheet $\{W(\mbf{t})\}_{\mbf{t}\in[0,1]^d}$ in $H$ whose covariance operator is defined as in \eqref{eq:CovOp}. Then $\{W^{(k)}(\mbf{t})\}_{\mbf{t}\in[0,1]^d}=\{P_k W(\mbf{t})\}_{\mbf{t}\in[0,1]^d}$ is a Brownian sheet in $H_k$ with covariance operator $S_k=P_k S P_k$. In particular, the covariance operator can be identified with the $k\times k$ nonnegative definite covariance matrix $\Sigma=(\gamma_{i,j})_{1\leq i,j\leq k}$ with $\gamma_{i,j}=\sum_{\mbf{v}\in\Z^d}\mathrm{E}\left[\sprod{X_{\undnr{0}},e_i}\sprod{X_\mbf{v},e_j} \right]$. \\
For each $k\geq 1$, the convergence 
\[\left\{P_k S_n(\mbf{t})\right\}_{\mbf{t}\in[0,1]^d} \Rightarrow \{W^{(k)}(\mbf{t})\}_{\mbf{t}\in[0,1]^d}, \quad \text{in $D_{H_k}([0,1]^d)$,}\]
is equivalent to the FCLT for the $k$-dimensional random field $\tilde X_\mbf{j}^{(k)}=(\sprod{X_\mbf{j},e_1},\ldots,\sprod{X_\mbf{j},e_k})^\top$. Since  $\{\tilde X^{(k)}_\mbf{j}\}_{\mbf{j}\in\Z^d}$ fulfills the assumptions of the Lemma, Lemma \ref{le:FCLTRkMixing} yields
\[\left\{\frac{1}{n^{d/2}}\sum\limits_{\undnr{1}\leq \mbf{j}\leq \gaus{n\mbf{t}}} \tilde X_\mbf{j}^{(k)} \right\}_{\mbf{t}\in[0,1]^d} \Rightarrow \{\tilde W^{(k)}(\mbf{t})\}_{\mbf{t}\in[0,1]^d}, \quad \text{in $D_{\R^k}([0,1]^d)$},\]
where $\{\tilde W^{(k)}(\mbf{t})\}_{\mbf{t}\in[0,1]^d}$ is a Brownian sheet in $\R^k$ with covariance matrix $\Sigma$, i.e., condition $(a)$ of Lemma \ref{le:ChenWhite} is satisfied. \\
Let $\{W(\mbf{t})\}_{\mbf{t}\in[0,1]^d}$ be a Brownian sheet in $H$ with $\Cov W(\mbf{1})=S$, where $S$ is as defined in \eqref{eq:CovOp}. For every $e_i$, $\{\sprod{W(\mbf{t}),e_i}\}_{\mbf{t}\in[0,1]^d}$ is a Brownian sheet in $\R$, and therefore Cairoli's strong inequality (Corollary 2.3.1 in Chapter 7 of \cite{khoshnevisan2002}) for submartingale random fields in $\R$ yields
\begin{align*}
\mathrm{E}\left[\sup_{\mbf{t}\in[0,1]^d}\left\|W(\mbf{t})-W^{(k)}(\mbf{t})\right\|^2\right] &= \mathrm{E}\left[\sup_{\mbf{t}\in[0,1]^d}\sum_{i=k+1}^\infty \sprod{W(\mbf{t}),e_i}^2\right] \\
&\leq \sum_{i=k+1}^\infty \mathrm{E}\left[\sup_{\mbf{t}\in[0,1]^d} \sprod{W(\mbf{t}),e_i}^2\right] \\
&\leq 4^d \sum_{i=k+1}^\infty \mathrm{E}\left[\sprod{W(\undnr{1}),e_i}^2\right] \\
&= 4^d \sum_{i=k+1}^\infty \sprod{Se_i,e_i} \stackrel{k\to\infty}{\longrightarrow} 0,
\end{align*}
which implies $\sup\limits_{\mbf{t}\in[0,1]^d}\|W(\mbf{t})-W^{(k)}(\mbf{t})\|^2 \to 0$ in probability and therefore $W^{(k)} \Rightarrow W$ in $D_H([0,1]^d)$.

Finally, we show condition $(c)$. We note that due to the Hilbert space property,
\[\|A_k(X_{\undnr{1}})\| = \left\| X_{\undnr{1}}-\sum_{i=1}^k \sprod{X_{\undnr{1}},e_i}e_i\right\| \stackrel{k\to\infty}{\longrightarrow} 0\quad a.s.\]
Using $\|A_k(X_{\undnr{1}})\|^r\leq \|X_{\undnr{1}}\|^r$ and the dominated convergence theorem, this implies
\[ \max\{\mathrm{E}\|A_k(X_{\undnr{1}})\|^r,\mathrm{E}\|A_k(X_{\undnr{1}})\|^2\} \stackrel{k\to\infty}{\longrightarrow} 0. \]
We can therefore apply \eqref{eq:maxIneq} to $\{A_k(X_\mbf{j})\}_{\mbf{j}\in\Z^d}$ and obtain
\begin{align*}
&\mathrm{E}\left[\sup_{\mbf{t}\in[0,1]^d}\|S_n(\mbf{t})-P_k S_n(\mbf{t})\|^r \right] \\
=&n^{-rd/2} \mathrm{E}\left[\max\limits_{\undnr{1}\leq \mbf{l}\leq \undnr{n}}\left\|\sum\limits_{\undnr{1}\leq\mbf{j}\leq\mbf{l}}A_k(X_\mbf{j})\right\|^r\right] \\
\leq& \tilde{C} B_{d,r} \left\{\mathrm{E}\|A_k(X_{\undnr{1}})\|^r + \left(\mathrm{E}\|A_k(X_{\undnr{1}})\|^2\right)^{r/2}\right\} \stackrel{k\to\infty}{\longrightarrow} 0,
\end{align*}
where we have used \eqref{eq:maxIneq} for $r=2+\delta>2$. This yields $(c)$ of Lemma \ref{le:ChenWhite}.
\end{proof}

\begin{proof}[Proof of Theorem \ref{theo2}] We will use Lemma \ref{le:ChenWhite}. 
For $k\in\N$, we start by establishing the tightness of $S_{n,1}^{\star(k)},\ldots,S_{n,K}^{\star(k)}$. Since $S_n^{(k)}$ is also tight (this is a direct consequence of the weak convergence of the $k$-dimensional partial sum process, which was proven as part of the proof of Theorem \ref{thm:FCLT}), the tightness of $(S_n^{(k)},S_{n,1}^{\star(k)},\ldots,S_{n,K}^{\star(k)})$ will then follow immediately. 

Note that for any $j\in\{1,\ldots,K\}$ and $\mbf{t}\in[0,1]^d$
\begin{equation*}
S_{n,j}^{\star(k)}(\mathbf{t})=\frac{1}{n^{d/2}}\sum_{\mathbf{1}\leq\mathbf{i}\leq\lfloor n\mathbf{t} \rfloor}\left(X_{\mathbf{i}}^{(k)}-\mu^{(k)}\right)V_{n,j}(\mathbf{i})-\frac{1}{n^{d/2}}\sum_{\mathbf{1}\leq\mathbf{i}\leq\lfloor n\mathbf{t} \rfloor}\left\{\hat{\mu}^{(k)}(\mbf{i})-\mu^{(k)}\right\}V_{n,j}(\mathbf{i}).
\end{equation*}
Using Lemma \ref{lemboot2}, we obtain that the first summand is stochastically bounded and fulfills the tightness condition $(iv)$ of Lemma \ref{le:FCLTRk} (see the proof of Lemma \ref{le:FCLTRkMixing}). Since by assumption the change-set estimator $\hat{C}_n$ is a subblock of $(\undnr{0},\undnr{n}]$, we can bound the second summand by
\begin{align*}
&\left\|\frac{1}{n^{d/2}}\sum_{\undnr{1}\leq\mathbf{i}\leq\lfloor n\mathbf{t} \rfloor}\left\{\hat{\mu}^{(k)}(\mbf{i})-\mu^{(k)}\right\}V_{n,j}(\mathbf{i})\right\|\\
\leq&n^{d/2} \max\limits_{\undnr{1}\leq\mbf{i}\leq\undnr{n}}\left\|\hat{\mu}^{(k)}(\mbf{i})-\mu^{(k)} \right\| \,\frac{1}{n^{d}}\sum_{\undnr{1}\leq\mathbf{i}\leq\lfloor n\mathbf{t} \rfloor}|V_{n,j}(\mathbf{i})| \\
\leq& C \max\limits_{\undnr{1}\leq\mbf{l}<\mbf{m}\leq\undnr{n}}\left\|\frac{1}{n^{d/2}}\sum\limits_{\mbf{l}\leq\mbf{i}\leq\mbf{m}}\left(X^{(k)}_{\mbf{i}} - \mu^{(k)}\right)\right\| \frac{1}{n^{d}}\sum_{\undnr{1}\leq\mathbf{i}\leq\lfloor n\mathbf{t} \rfloor}|V_{n,j}(\mathbf{i})|
\end{align*}
for some $C>0$ and all $\mbf{t}\in[0,1]^d$. By Lemma \ref{le:RosZhang}, the first factor is stochastically bounded. For the second factor, note that due to the Gaussian distribution of $V_{n,j}(\mbf{i})$, for any block $S$ and $r\geq 2$,
\begin{equation}\label{eq:momBoundV}
\mathrm{E}\Big|\sum_{\mathbf{k}\in S}|V_{n,j}(\mathbf{k})|\Big|^r\leq C_r\left(\# S\right)^r
\end{equation}
holds for some constant $C_r >0$. Therefore, the second summand is stochastically bounded. Writing 
\[Y_n(\cdot)=\frac{1}{n^{d/2}}\sum_{\undnr{1}\leq\mathbf{i}\leq\lfloor n\mathbf{\cdot} \rfloor}\left\{\hat{\mu}^{(k)}(\mbf{i})-\mu^{(k)}\right\}V_{n,j}(\mathbf{i}) \quad \text{ and } \quad W_n(\cdot)=n^{-d}\sum_{\undnr{1}\leq \mbf{i}\leq\gaus{n\cdot}}|V_{n,j}(\mbf{i})|,\]
the modulus of continuity of the second summand can be bounded in the following way: 
\begin{align*}
&\Pr\left(\omega^k_{Y_n}(\delta)\geq \varepsilon\right)\\
\leq& \sum_{h=1}^d \Pr\left(\sup\limits_{\mbf{t}\in[0,1]^d:\, t_h \leq 1-\delta, \gamma\in(0,\delta)}\|Y_n(t_1,\ldots,t_{h-1},t_h+\gamma,t_{h+1},\ldots,t_d) - Y_n(\mbf{t})\| \geq \varepsilon d^{-1}\right) \\
=& \sum_{h=1}^d \Pr\left(\sup\limits_{\mbf{t}\in[0,1]^d:\, t_h \leq 1-\delta, \gamma\in(0,\delta)}\frac{1}{n^{d/2}}\left\|\sum\limits_{\mycom{\undnr{1}\leq\mbf{i}\leq\gaus{n\mbf{t}}}{\gaus{nt_h}< i_h \leq \gaus{n (t_h+\gamma)}}}\left\{\hat{\mu}^{(k)}(\mbf{i}) - \mu^{(k)}\right\} V_{n,j}(\mbf{i})\right\| \geq \varepsilon d^{-1}\right) \\
\leq& \sum_{h=1}^d \Pr\left(\max\limits_{\undnr{1}\leq\mbf{i}\leq\undnr{n}}n^{d/2}\|\hat{\mu}^{(k)}(\mbf{i}) - \mu^{(k)}\| \cdot \sup\limits_{\mbf{t}\in[0,1]^d:\, t_h \leq 1-\delta, \gamma\in(0,\delta)}\frac{1}{n^d}\sum\limits_{\mycom{\undnr{1}\leq\mbf{i}\leq\gaus{n\mbf{t}}}{\gaus{nt_h}< i_h \leq \gaus{n (t_h+\gamma)}}}|V_{n,j}(\mbf{i})| \geq \varepsilon d^{-1}\right) \\
\leq& d \Pr\left(\max\limits_{\undnr{1}\leq\mbf{i}\leq\undnr{n}}n^{d/2}\|\hat{\mu}^{(k)}(\mbf{i}) - \mu^{(k)}\| > C\right) \\
&+ \sum_{h=1}^d \Pr\left(\sup\limits_{\mbf{t}\in[0,1]^d:\, t_h \leq 1-\delta, \gamma\in(0,\delta)}|W_n(t_1,\ldots,t_{h-1},t_h+\gamma,t_{h+1},\ldots,t_d) - W_n(\mbf{t})| \geq \varepsilon d^{-1}C^{-1}\right). 
\end{align*}
The first summand goes to 0 uniformly in $n$ for $C\to \infty$. For the second summand, define
\[A_m(h,\delta)=(0,1]\times\ldots\times((m-1)\delta,m \delta\wedge 1]\times\ldots\times(0,1] \]
for $m=1,\ldots,p$ with $p=p(\delta)=\gaus{\delta^{-1}}+1$ and
\[U_{m,n}= \{\gaus{n \mbf{t}}:\, \mbf{t}\in A_m(h,\delta) \}.\]
Then, $\# U_{m,n} \leq n^{d} \delta$, and therefore,
\begin{align*}
&\Pr\left(\sup\limits_{\mbf{t}\in[0,1]^d:\, t_h \leq 1-\delta, \gamma\in(0,\delta)}|W_n(t_1,\ldots,t_{h-1},t_h+\gamma,t_{h+1},\ldots,t_d) - W_n(\mbf{t})| \geq \varepsilon d^{-1}C^{-1}\right) \\
\leq& \sum_{m=1}^p \Pr\left(\sup\limits_{\mbf{s},\mbf{t}\in A_m(h,\delta),\, s_r=t_r \; (r\neq h)}|W_n(\mbf{t}) - W_n(\mbf{s})| \geq \frac{\varepsilon}{2} d^{-1}C^{-1}\right) \\
\leq& \sum_{m=1}^p  \Pr\left(\sup\limits_{V\triangleleft U_{m,n}}n^{-d} \sum_{\mbf{i}\in V} |V_{n,j}(\mbf{i})| \geq \frac{\varepsilon}{4} d^{-1}C^{-1}\right) \\
\leq&  \sum_{m=1}^p  \Pr\left(n^{-d} \sum_{\mbf{i}\in U_{m,n}} |V_{n,j}(\mbf{i})| \geq \frac{\varepsilon}{4} d^{-1}C^{-1}\right) \\
\leq& \sum_{m=1}^p n^{-dr} 4^r d^r C^r \varepsilon^{-r} C_r (\# U_{m,n})^r \\
\leq& 4^r d^r C^r \varepsilon^{-r} C_r (1+\delta^{-1}) \delta \cdot \delta^{r-1} \stackrel{\delta\to 0}{\longrightarrow} 0.
\end{align*}
Thus, condition $(iv)$ of Lemma \ref{le:FCLTRk} is fulfilled for the second summand as well. Therefore, the sum $S_{n,j}^{\star(k)}$ is stochastically bounded with a modulus of continuity that fulfills the tightness condition, and therefore it is tight.

Next, we establish the finite dimensional convergence. Note that due to the tightness of the process, it suffices to show that for any subsequence, there exists a further subsequence such that the finite dimensional distributions converge to the right limit distribution. 
To do this, we first show the following result: For any subsequence $(n_m)_{m\in \mathbb{N}}$, there is another subsequence $(n_m)_{m\in M}$ with $M\subset \mathbb{N}$, such that for all $k,l\in\mathbb{N}$ and all disjoint blocks $B_1,\ldots,B_l$ with corners in $([0,1]\cap\mathbb{Q})^d$, the weak convergence of the conditional (on $X_{\mathbf{i}}$, $\mathbf{i}\leq \undnr{n_m}$) distribution of the random vectors
\begin{equation*}
\mathbf{W}_{m,j}^\star:=\left(S_{n_m,j}^{\star(k)}(B_1),S_{n_m,j}^{\star(k)}(B_2),\ldots,S_{n_m,j}^{\star(k)}(B_l)\right)^\top, \quad j=1,\ldots,K
\end{equation*}
to $\mathbf{W}_j^\star:=\left(W_j^{\star(k)}(B_1),W_j^{\star(k)}(B_2),\ldots,W_j^{\star(k)}(B_l)\right)^\top$, $j=1,\ldots,K$, holds almost surely for $(n_m)_{m\in M}$. \\

To show this, note that conditional on $X_{\mathbf{i}}, \mathbf{i}\leq \undnr{n_m}$, $\mbf{W}_{m,1}^\star,\ldots,\mbf{W}_{m,K}^\star$ are stochastically independent and have a Gaussian distribution with mean 0, so it suffices to show the convergence of the conditional covariance operators. For $j\in\{1,\ldots,K\}$ and $l_1,l_2\in\{1,\ldots,l\}$, the covariance operators are given by 
\begin{align*}
&\Cov^\star\left(S_{n,j}^{\star(k)}(B_{l_1}), S_{n,j}^{\star(k)}(B_{l_2})\right)\\
=&\mathrm{E}\left[S_{n,j}^{\star(k)}(B_{l_1})S_{n,j}^{\star(k)}(B_{l_2})^\top | X_\mbf{i}, \mbf{i}\leq\undnr{n}\right] \\
=&\frac{1}{n^d}\sum_{\mathbf{a}\in B_{l_1,n}}\sum_{\mathbf{b}\in B_{l_2,n}}\{X^{(k)}_{\mathbf{a}}-\hat{\mu}^{(k)}(\mbf{a})\}\{X^{(k)}_{\mathbf{b}}-\hat{\mu}^{(k)}(\mbf{b})\}^\top \mathrm{E}\left[V_{n,j}(\mathbf{a})V_{n,j}(\mathbf{b})\right]\\
=&\sum_{\mbf{h}\in B_{l_2,n}\ominus B_{l_1,n}}\omega\left(\frac{\mbf{h}}{q(n)}\right)\frac{1}{n^d}\sum_{\mbf{a}:\,\mathbf{a}\in B_{l_1,n},\mbf{a}+\mbf{h}\in B_{l_2,n}}\{X^{(k)}_{\mathbf{a}}-\hat{\mu}^{(k)}(\mbf{a})\}\{X^{(k)}_{\mathbf{a}+\mbf{h}}-\hat{\mu}^{(k)}(\mbf{a}+\mbf{h})\}^\top .
\end{align*}
For $l_1=l_2$, this is the covariance estimator proposed by Bucchia and Heuser \cite{bHeuser}. According to Lemma \ref{rem:LRV}, under the assumptions of Theorem \ref{theo2} this estimator converges in probability to $\lambda(B_{l_1})\Sigma$, where $\Sigma$ is the long-run variance matrix. Write $\var^\star\left(S_{n,j}^{\star(k)}(B_{l_1})\right)=\Cov^\star\left(S_{n,j}^{\star(k)}(B_{l_1}),S_{n,j}^{\star(k)}(B_{l_1})\right)$. For $l_1\neq l_2$, it holds that
\begin{align*}
&\var^\star\left(S_{n,j}^{\star(k)}(B_{l_1}\cup B_{l_2})\right) \stackrel{\Pr}{\longrightarrow} \lambda(B_{l_1}\cup B_{l_2}) \Sigma=\left\{\lambda(B_{l_1})+\lambda(B_{l_2})\right\} \Sigma \quad \text{(see Lemma \ref{rem:LRV}),}
\end{align*}
and thus
\begin{align*}
&\Cov^\star\left(S_{n,j}^{\star(k)}(B_{l_1}), S_{n,j}^{\star(k)}(B_{l_2})\right) \\
&= \frac{1}{2}\left\{\var^\star\left(S_{n,j}^{\star(k)}(B_{l_1})+ S_{n,j}^{\star(k)}(B_{l_2})\right)-\var^\star\left(S_{n,j}^{\star(k)}(B_{l_1})\right)-\var^\star\left(S_{n,j}^{\star(k)}(B_{l_2})\right)\right\}\\
&= \frac{1}{2}\left\{\var^\star\left(S_{n,j}^{\star(k)}(B_{l_1}\cup B_{l_2})\right)-\var^\star\left(S_{n,j}^{\star(k)}(B_{l_1})\right)-\var^\star\left(S_{n,j}^{\star(k)}(B_{l_2})\right)\right\}\stackrel{\Pr}{\longrightarrow} 0.
\end{align*}
Therefore, for any subsequence $(n_m)_{m\in\N}$, there exists a further subsequence $(n_m)_{m\in M}$ such that the estimator converges almost surely. Since we only consider countably many blocks $B_i$, by a diagonal sequence argument we can choose a single subsequence $(n_m)_{m\in M}$ so that the almost sure convergence holds for all $k\in\N$ and all blocks with edges in $(\mathbb{Q}\cap[0,1])^d$.

By the following argument, the almost sure weak convergence yields the weak convergence of the joint distribution.
Define the random vectors
\begin{equation*}
\mathbf{W}_m:=\left(S_{n_m}^{(k)}(B_1),S_{n_m}^{(k)}(B_2),\ldots,S_{n_m}^{(k)}(B_l)\right)^\top
\end{equation*}
and $\mathbf{W}:=\left(W^{(k)}(B_1),W^{(k)}(B_2),\ldots,W^{(k)}(B_l)\right)^\top$. Note that by assumption, $\mbf{W},\mbf{W}_1^\star,\ldots,\mbf{W}_K^\star$ are stochastically independent. It holds for any Borel sets $A_0,A_1,\ldots,A_K\subset H_k^l$ that
\begin{align*}
&\left|\Pr\left(\mathbf{W}_m\in A_0,\mathbf{W}_{m,1}^\star\in A_1,\ldots,\mbf{W}_{m,K}^\star\in A_K\right)-\Pr\left(\mathbf{W}\in A_0,\mathbf{W}_1^\star\in A_1,\ldots,\mbf{W}_K^\star\in A_K\right)\right|\\
=&\left|\mathrm{E}\left[\Pr\left(\mathbf{W}_m\in A_0,\mathbf{W}_{m,1}^\star\in A_1,\ldots,\mbf{W}_{m,K}^\star\in A_K\big|\mathbf{X}_{\mathbf{i}}, \mathbf{i}\leq \undnr{n_m}\right)\right] \right. \\
&\left.-\Pr\left(\mathbf{W}\in A_0,\mathbf{W}_1^\star\in A_1,\ldots, \mbf{W}_K^\star \in A_K\right)\right|\\
\leq& \mathrm{E}\left[\mathds{1}_{\{\mathbf{W}_m\in A_0\}}\left|\Pr\left(\mathbf{W}_{m,1}^\star\in A_1,\ldots, \mbf{W}_{m,K}^\star\in A_K\big|\mathbf{X}_{\mathbf{i}}, \mathbf{i}\leq \undnr{n_m}\right)-\Pr\left(\mathbf{W}_1^\star\in A_1,\ldots,\mbf{W}_K^\star\in A_K\right)\right|\right]\\
&+\left|\Pr\left(\mathbf{W}_m\in A_0\right)\Pr\left(\mathbf{W}_1^\star\in A_1,\ldots,\mbf{W}_K^\star\in A_K\right)-\Pr\left(\mathbf{W}\in A_0\right)\Pr\left(\mathbf{W}_1^\star\in A_1,\ldots,\mbf{W}_K^\star \in A_K\right)\right|\\
\leq& \mathrm{E}\left[\left|\Pr\left(\mathbf{W}_{m,1}^\star\in A_1,\ldots,\mbf{W}_{m,K}^\star \in A_K\big|\mathbf{X}_{\mathbf{i}}, \mathbf{i}\leq \undnr{n_m}\right)-\Pr\left(\mathbf{W}_1^\star\in A_1,\ldots, \mbf{W}_K^\star\in A_K\right)\right|\right]\\
&+\left|\Pr\left(\mathbf{W}_m\in A_0\right)-\Pr\left(\mathbf{W}\in A_0\right)\right|\\
&\rightarrow 0,
\end{align*}
as almost sure convergence implies convergence in $L_1$ for bounded random variables and as the last summand converges to 0 by Theorem \ref{thm:FCLT}.

As the process $(S_{n_m}^{(k)},S_{n_m,1}^{\star(k)},\ldots,S_{n_m,K}^{\star(k)})$ is continuous from above, the convergence of all finite dimensional distributions follows from the convergence for all disjoint $B_1,\ldots,B_l$ with corners in $([0,1]\cap\mathbb{Q})^d$ (see the remark after Theorem 3 in \cite{bickel1971}). Together with the tightness of $(S_n^{(k)},S_{n,1}^{\star(k)},\ldots,S_{n,K}^{\star(k)})$, condition (a) of Lemma \ref{le:ChenWhite} follows: for every $k$, the process $(S_n^{(k)},S_{n,1}^{\star(k)},\ldots,S_{n,K}^{\star(k)})$ converges to $(W^{(k)},W_1^{\star(k)},\ldots,W_K^{\star(k)})$.

From the proof of Theorem \ref{thm:FCLT}, we already know that $W^{(k)}\Rightarrow W$ as $k\rightarrow\infty$. $W_1^{\star(k)},\ldots,W_K^{\star(k)}$ and $W_1^\star,\ldots,W_K^{\star(k)}$ are independent copies of $W^{(k)}$ respectively $W$, so condition (b) is obvious.

For condition (c), note that for $r=2+\delta$
\begin{align*}
& \mathrm{E}\left[\sup_{\mathbf{s},\mbf{t}_1,\ldots,\mbf{t}_K\in[0,1]^d}\left\|\left(S_n(\mathbf{s}),S_{n,1}^{\star}(\mathbf{t}_1),\ldots,S_{n,K}^{\star}(\mathbf{t}_K)\right)-\left(S_n^{(k)}(\mathbf{s}),S_{n,1}^{\star(k)}(\mathbf{t}_1),\ldots,S_{n,K}^{\star(k)}(\mathbf{t}_K)\right)\right\|^{r}\right]\\
\leq& 2^{r-1}\mathrm{E}\left[\sup_{\mathbf{s}\in[0,1]^d}\left\|S_n(\mathbf{s})-S_n^{(k)}(\mathbf{s})\right\|^{r}\right] + 2^{K(r-1)}\sum_{j=1}^K \mathrm{E}\left[\sup_{\mathbf{t}\in[0,1]^d}\left\|S_{n,j}^{\star}(\mathbf{t})-S_{n,j}^{\star(k)}(\mathbf{t})\right\|^{r}\right] \\
\leq& 2^{r-1}\mathrm{E}\left[\sup_{\mathbf{s}\in[0,1]^d}\left\|S_n(\mathbf{s})-S_n^{(k)}(\mathbf{s})\right\|^{r}\right] \\
+& 2^{(K+1)(r-1)}\sum_{j=1}^K \mathrm{E}\left[\sup_{\mathbf{t}\in[0,1]^d}\left\|\frac{1}{n^{d/2}}\sum\limits_{\undnr{1}\leq\mbf{i}\leq\gaus{n\mbf{t}}}V_{n,j}(\mbf{i})\left\{A_k(X_\mbf{i})-A_k(\mu)\right\}\right\|^{r}\right] \\
+&2^{(K+1)(r-1)} \sum_{j=1}^K \mathrm{E}\left[\sup_{\mathbf{t}\in[0,1]^d}\left\|\frac{1}{n^{d/2}}\sum\limits_{\undnr{1}\leq\mbf{i}\leq\gaus{n\mbf{t}}}V_{n,j}(\mbf{i})\left\{A_k(\hat{\mu}(\mbf{i}))-A_k(\mu)\right\}\right\|^{r}\right].
\end{align*}
We have already shown in the proof of Theorem \ref{thm:FCLT} that the first term converges to 0 for $k\to\infty$. The convergence to 0 of the second term follows with the same arguments, replacing Lemma \ref{le:RosZhang} with Lemma \ref{lemboot2}. For the third term, consider
\begin{align*}
&\mathrm{E}\left[\sup_{\mathbf{t}\in[0,1]^d}\left\|\frac{1}{n^{d/2}}\sum\limits_{\undnr{1}\leq\mbf{i}\leq\gaus{n\mbf{t}}}V_{n,j}(\mbf{i})\left\{A_k(\hat{\mu}(\mbf{i}))-A_k(\mu)\right\}\right\|^{r}\right] \\
\leq& \mathrm{E}\left[n^{rd/2}\max\limits_{\undnr{1}\leq\mbf{i}\leq\undnr{n}}\left\|A_k\left(\hat{\mu}(\mbf{i})-\mu\right) \right\|^r\right] \cdot \mathrm{E}\left[\left|\frac{1}{n^{d}}\sum\limits_{\undnr{1}\leq\mbf{i}\leq\undnr{n}}|V_{n,j}(\mbf{i})|\right|^{r}\right] \\
\leq& C  \mathrm{E}\left[\max\limits_{\undnr{1}\leq\mbf{l}<\mbf{m}\leq\undnr{n}}\left\|\frac{1}{n^{d/2}}\sum\limits_{\mbf{l}\leq\mbf{i}\leq\mbf{m}}A_k\left(X_{\mbf{i}} - \mu\right)\right\|^r \right]\cdot \mathrm{E}\left[\left|\frac{1}{n^{d}}\sum\limits_{\undnr{1}\leq\mbf{i}\leq\undnr{n}}|V_{n,j}(\mbf{i})|\right|^{r}\right]. 
\end{align*}
Since the first factor goes to 0 (see the proof of Theorem \ref{thm:FCLT}) and the second factor remains bounded (see \eqref{eq:momBoundV}), the proof of condition (c) is finished.
\end{proof}

\end{subsection}
\end{section}

\section*{Acknowledgements}
We are thankful to two anonymous referees for the careful reading of this paper and the very helpful suggestions for improvements. 

\section*{References}
\bibliographystyle{model2-names} 
\bibliography{ReferencesDWB} 
\end{document}